\newcommand{\norm}[1]{\left\lVert#1\right\rVert}
\newcommand{\normt}[1]{\left\lVert#1\right\rVert_2}
\newcommand{\lip}[1]{\left[#1\right]}
\newcommand{\brac}[1]{\left(#1\right)}
\DeclareMathOperator{\R}{\mathbb{R}}
\newcommand{\C}{\mathbb{C}}
\newcommand{\N}{\mathbb{N}}
\newcommand{\D}{\mathbf{D}}
\newcommand{\K}{\mathcal{K}}
\newcommand{\M}{\mathcal{M}}
\newtheorem{theorem}{Theorem}
\newtheorem{corollary}{Corollary}
\newtheorem{lemma}{Lemma}
\newtheorem{definition}{Definition}
\newtheorem{example}{Example}
\newtheorem{assumption}{Assumption}
\title{Error analysis based on inverse modified differential equations for discovery of dynamics using linear multistep methods and deep learning}
\author[1]{Aiqing Zhu}
\author[1]{Sidi Wu}
\author[1*]{Yifa Tang}
\affil[1]{LSEC, ICMSEC, Academy of Mathematics and Systems Science, Chinese Academy of Sciences, Beijing 100190, China}
\affil[*]{Email address: tyf@lsec.cc.ac.cn}
\date{}
\begin{document}

\maketitle

\begin{abstract}
Along with the practical success of the discovery of dynamics using deep learning, the theoretical analysis of this approach has attracted increasing attention. Prior works have established the grid error estimation with auxiliary conditions for the discovery of dynamics using linear multistep methods and deep learning. And we extend the existing error analysis in this work. We first introduce the concept of inverse modified differential equations (IMDE) for linear multistep methods and show that the learned model returns a close approximation of the IMDE. Based on the IMDE, we prove that the error between the discovered system and the target system is bounded by the sum of the LMM discretization error and the learning loss. Furthermore, the learning loss is quantified by combining the approximation and generalization theories of neural networks, and thereby we obtain the priori error estimates for the discovery of dynamics using linear multistep methods. Several numerical experiments are performed to verify the theoretical analysis.
\end{abstract}

\begin{keywords}
Learning dynamics, data-driven discovery, linear multistep methods, deep learning, backward error analysis
\end{keywords}


\section{Introduction}
The discovery of hidden dynamical systems from observed data is a significant and challenging task across diverse scientific fields \cite{brunton2019data,brunton2016discovering,schmidt2009distilling}, and also plays an important role in various applications of machine learning, such as robotic manipulation \cite{hersch2008dynamical} and autonomous driving \cite{levinson2011towards}. Besides some popular approaches such as symbolic regression \cite{bongard2007automated,schmidt2009distilling}, sparse regression \cite{brunton2016discovering, rudy2017data}, Gaussian process \cite{raissi2017machine,kocijan2005dynamic} and Koopman theory \cite{brunton2017chaos}, numerous approaches leverage deep learning to discover mathematical models from data. We refer the reader to \cite{du2022discovery} for a comprehensive overview of techniques for the discovery of dynamics. There were multiple pioneering efforts using neural networks to model continuous time-series data in the 1990s \cite{anderson1996comparison,gonzalez1998identification,rico1994continuous,rico1993continuous}. In recent years, a growing number of models combining numerical integrators and neural networks have been developed and applied to learn hidden dynamics expressed by differential equations \cite{chen2018neural,hu2022revealing, kolter2019learning,qin2019data,raissi2018multistep}. In addition, such learning models have been further extended to incorporate the physical inductive bias of the underlying problems \cite{bertalan2019learning,botev2021priors, greydanus2019hamiltonian, huh2020time, lu2022modlanets, yu2021onsagernet,zhang2022gfinns,zhong2021extending}.

With satisfactory progress in the discovery using deep learning, our theoretical understanding of it is also being developed \cite{du2022discovery,keller2021discovery,zhu2022on}. In this work, we focus on the discovery of dynamics using linear multistep methods (LMMs) and deep learning, which we refer to as LMNets \cite{du2022discovery, keller2021discovery,raissi2018multistep, xie2019non}. This technique is essentially an inverse process of identifying the unknown governing function $f$ of a dynamical system using the provided information of the flow map at given phase points. Our aim is to extend the prior existing error estimations for such a learning model, as presented in \cite{du2022discovery, keller2021discovery}, from a new perspective.

To begin with, we briefly review the data-driven discovery models LMNets following \cite{du2022discovery,keller2021discovery,raissi2018multistep}. Suppose the vector-valued functions $y(t): \R\rightarrow \R^D$ and $f(y): \R^D\rightarrow \R^D$ obey the differential equation
\begin{equation}\label{eq: ode0}
\frac{d}{dt}y(t) = f(y(t))
\end{equation}
but are both unknown. The objective is to find a closed-form expression for the unknown target governing function $f$ with given $y_n = y(nh)$ for $n=0,\cdots, N$. By applying an LMM, we can build the discrete relation between the states $y_n$ and the approximations $f_n \approx f(y_n)$ \cite{du2022discovery,keller2021discovery,raissi2018multistep}, i.e., 
\begin{equation}\label{eq: relation of lmnets}
\sum_{m=0}^M \alpha_my_{n+m} - h\sum_{m=0}^M\beta_mf_{n+m}=0, \quad n = 0, \cdots, N-M.
\end{equation}
To obtain closed-form expressions, we employ a neural network, denoted by $f_{\theta}$, to approximate $f$. The parameter of $f_{\theta}$ can be obtained by optimizing the training loss obtained by replacing each occurrence of $f_{n+m}$ in (\ref{eq: relation of lmnets}) with $f_{\theta}(y_{n+m})$,
\begin{equation}\label{eq: loss of lmnets}
\mathcal{L}_{h,N} (f_{\theta})= \frac{1}{N-M+1}\sum_{n=0}^{N-M}\norm{\sum_{m=0}^M h^{-1}\alpha_my_{n+m} - \sum_{m=0}^M\beta_mf_{\theta}(y_{n+m})}_2^2.
\end{equation}
After optimization, the unknown $f$ is recovered with an explicit expression, and we can then predict the dynamics on nearby data by solving (\ref{eq: ode0}) by means of the generalization of deep learning.

Although the $f_n$ might not be all involved in (\ref{eq: relation of lmnets}), we typically have more unknowns than we have equations. Thus the linear system (\ref{eq: relation of lmnets}) has infinitely many solutions, and $\mathcal{L}_h$ admits infinitely many global minimizers that lead to near-zero loss but can be radically different from the target governing function. In \cite{keller2021discovery}, the auxiliary conditions are proposed to ensure uniqueness, and a framework based on refined notions is established for rigorous convergence and stability analysis. Until recently, the best error estimation was due to Du et al. \cite{du2022discovery}. By introducing an augmented loss function based on the auxiliary conditions, they prove that the gird error of the global minimizer is bounded by the sum of the discretization error and the approximation error, yielding that LMNets are able to have high error orders.

However, prior existing analyses \cite{du2022discovery,keller2021discovery} have primarily focused on training networks with auxiliary conditions, and have only quantified the error evaluated at the given sample locations. In practice, the use of deep learning approaches introduces further implicit regularization. Du et al. \cite{du2022discovery} numerically find that LMNets without the auxiliary conditions still perform well as long as the time step size is sufficiently small. They conjecture that this is due to the implicit regularization that the gradient descent tends to find a very smooth function among all global minimizers. Moreover, we typically need to forecast the future behavior of the same dynamics or predict other dynamics, which inevitably leads to the generalization error. Whereas the numerical results have shown excellent generalization performance \cite{du2022discovery, raissi2018multistep}, the error out of sample locations needs further precise quantification.

We partially solved these open questions, i.e., quantifying the error out of sample locations without relying on the auxiliary conditions. The main ingredients are the formal analysis \cite{feng1991formal,feng1993formal} as well as the inverse modified differential equations (IMDE) proposed in our conference paper \cite{zhu2022on}. Following the framework introduced in \cite{keller2021discovery}, we first find a perturbed differential equation, i.e., the IMDE, which satisfies the multistep relation (\ref{eq: relation of lmnets}) formally. The explicit recursion formula of IMDE is given by means of the Lie derivatives and the multistep formula. In addition, inspired by the rigorous estimates of modified differential equations \cite{benettin1994hamiltonian, hairer1997life,hairer1999backward, reich1999backward}, we truncate IMDE and prove that the difference between the learned network and the truncated IMDE is bounded by the learning loss and a discrepancy that can be made exponentially small, which implies the uniqueness of the solution of the learning task (\ref{eq: loss of lmnets}) in some sense. Via investigating IMDE, we show that the error between the learned network and the target governing function is bounded by the sum of the LMM discretization error $\mathcal{O}(h^p)$ and the learning loss when using a $p$-th order LMM with step size $h$. Subsequently, we combine the approximation and generalization theories of neural networks to establish a priori error estimates of LMNets by quantifying the learning loss with the width of the employed network and the number of samples. It is noted that our conclusions do not depend on auxiliary conditions, and hold for any weakly stable and consistent LMM.

This paper is organized as follows: In \cref{sec: pre}, we briefly present some necessary notations and preliminaries. In \cref{sec: imde}, we give the explicit recursion formula of the IMDE of LMMs. In \cref{sec: error}, we study the difference between the target governing function and the suitably truncated IMDE, and present the main results and their detailed proofs. In \cref{sec: numerical results}, we provide several numerical results to verify the theoretical findings. Finally, we summarize our work in \cref{sec: summary}.

\section{Preliminaries}\label{sec: pre}
\subsection{Notations}
In this subsection, we collect together some of the notations and definitions introduced throughout the paper.
\begin{enumerate}
\item In this paper, we will work with the $\ell_1$ and $\ell_2$-norm on $\C^D$, as well as their corresponding tensor norms, which we denote as $\norm{\cdot}_1$, $\normt{\cdot}$, respectively. In addition, unless otherwise specified, we omit the subscript of $\ell_1$-norm for convenience, $\norm{\cdot }:=\norm{\cdot}_1$.
\item For a function $g$ defined on $\Omega$, we denote its sup-norm on $\Omega$ by 
\begin{equation*}
\norm{g}_{\Omega} = \sup_{x\in\Omega}\norm{g(x)}_1.
\end{equation*}
In addition, if $g$ is uniformly Lipschitz function, we denote its Lipschitz constant by
\begin{equation*}
\lip{g}_{\Omega} := \sup_{x, y \in \Omega. x\neq y} \frac{\normt{g(x) - g(y)}}{\normt{x-y}}.
\end{equation*} 
\item Let $\mathcal{B}(x,r) := \{y\in \R^D, \norm{y-x}_1\leq r\} \subset \R^D$ be the ball of radius $r>0$ centered at $x\in\R^D$. For a subset $\Omega \subset \R^D$, denote its neighborhood by $\mathcal{B}(\Omega, r) = \bigcup_{x \in \Omega} \mathcal{B}(x,r)$.
\item Let $\mathcal{CB}(x,r) := \{y\in \C^D, \norm{y-x}_1\leq r\} \subset \C^D$ be the complex ball of radius $r>0$ centered at $x\in\C^D$. For a compact subset $\Omega \subset \C^D$, denote its complex neighborhood by $\mathcal{CB}(\Omega, r) = \bigcup_{x \in \Omega} \mathcal{CB}(x,r)$. 
\end{enumerate}
\begin{definition}
A function $g: \Omega \rightarrow \C^{D'}$ is called $(Q, r)$-analytic on $\Omega \subset \R^{D}$ if $g$ is analytic on $\mathcal{CB}(\Omega, r)$ and $\norm{g}_{\mathcal{CB}(\Omega, r)} \leq Q$.
\end{definition}

\begin{definition}
A function $g: \Omega \rightarrow \R^{D'}$ is called $(Q, r, I)$-analytic on $\Omega \subset \R^{D}$ if $\|g^{(i)}\|_{\Omega} \leq i! \cdot Q \cdot r^{-i}$, for $i=0,\cdots, I$.
\end{definition}
It is noted that if $g$ is $(Q, r)$-analytic on $\Omega$, then, $g$ is $(Q, r, I)$-analytic on $\Omega$ for $\forall I\in \N$. This property can be found in Section 4.1 in \cite{hairer1997life}, and is the reason we work with the $\ell_1$-norm.

\subsection{Dynamical systems}
Without loss of generality, in this paper, we focus on autonomous systems of first-order ordinary differential equations
\begin{equation}\label{eq:ODE}
\frac{d}{dt}y(t) = f(y(t)),\quad y(0)=x,
\end{equation}
where $y(t) \in \R^D$ and $f:\R^D \rightarrow \R^D$ is smooth. The initial value is denoted by $x$ in this paper. A non-autonomous system $\frac{d}{dt}y(t) = f(t,y(t),p)$ with parameter $p$ can be brought into this form by appending the equations $\frac{d}{dt}t = 1$ and $\frac{d}{dt}p = 0$. For fixed $t$, $y(t)$ can be regarded as a function of its initial value $x$. We denote
\begin{equation*}
\phi_{t}(x):=y(t) = x + \int_0^t f(y(\tau))d\tau,
\end{equation*}
which is known as the time-$t$ flow map of dynamical system (\ref{eq:ODE}). In the following sections, we will add the subscript $f$ to specify a particular differential equation, denoting $\phi_t$ as $\phi_{t,f}$.

\subsection{Linear multistep methods}
For first order differential equations (\ref{eq:ODE}), linear multistep methods (LMMs) are defined by the formula
\begin{equation}\label{eq:LMM}
\sum_{m=0}^M \alpha_my_{m} - h\sum_{m=0}^M\beta_mf(y_{m})=0,
\end{equation}
where $\alpha_m,\beta_m$ are specified coefficients that satisfy $\alpha_M \neq 0$ and $|\alpha_0|+ |\beta_0|>0$. When applying an LMM to solve a differential equation (\ref{eq:ODE}), a starting method for initial $M$ values $y_1,\cdots, y_{M-1}$ must be chosen and the approximations $y_n$ for $n \geq M$ can then be computed recursively. The method (\ref{eq:LMM}) is called explicit if $\beta_M=0$, otherwise it is implicit and $y_n$ for $n \geq M$ can be computed iteratively by fixed-point iteration or Newton-Raphson iteration. Next, we present some basic notations and concepts following \cite{hairer2006geometric,hairer1993solving}.\\
\textbf{Weak stability.}
An LMM is weakly stable if \begin{equation}\label{eq:weak stability}
\sum_{m=0}^M m\cdot \alpha_m \neq 0.
\end{equation}
\textbf{Consistency.}
An LMM is consistent if
\begin{equation}\label{eq:consistency}
\sum_{m=0}^M \alpha_m =0,\ \text{and }\sum_{m=0}^M m\cdot \alpha_m =\sum_{m=0}^M\beta_m.
\end{equation}
\textbf{Order.}
An LMM is of order $p$ ($p \geq 1$) if
\begin{equation}\label{eq:order}
\sum_{m=0}^M \alpha_m =0,\ \text{and }\sum_{m=0}^M \frac{m^{k+1}}{(k+1)!}\cdot \alpha_m =\sum_{m=0}^M\frac{m^{k}}{k!} \cdot \beta_m\ \text{for }k=0,1,\cdots,p-1.
\end{equation}
In this paper, we concentrate on weakly stable and consistent linear multistep methods, whose coefficients satisfy $\sum_{m=0}^M\beta_m \neq 0$ due to the weak stability condition (\ref{eq:weak stability}) and the consistency condition (\ref{eq:consistency}). Thus we assume $\sum_{m=0}^M\beta_m =1$ without loss of generality. 

\subsection{Neural networks}
We briefly describe the setting of the supervised learning problem using neural networks in this subsection. Let $\M$ denote the hypothesis class consisting of neural networks. In this paper, we will focus on a widely used architecture known as feedforward neural networks (FNNs). Mathematically speaking, an FNN, denoted by $f_{\theta}(x)$, is composed of a series of linear layers and activation layers, i.e.,
\begin{equation*}
\begin{aligned}
f_{\theta}(x) = W_{L+1} h_L\circ h_{L-1}\circ \cdots \circ h_1(x) +b_{L+1},
\end{aligned}
\end{equation*}
where
\begin{equation*}
h_l(z_l) =\sigma (W_lz_l+b_l),\ W_l\in \R^{d_l \times d_{l-1}}, b_l\in \R^{d_l} \text{ for }l=1,\cdots,L+1.
\end{equation*}
The number $L$ is the total number of layers, $\theta =\{W_l, b_l\}_{l=1}^{L+1}$ are trainable parameters, and $\sigma:\R \rightarrow \R$ is a predetermined activation function that is applied element-wise to a vector. Popular examples include the rectified linear unit (ReLU) $\texttt{ReLU}(z) = \max(0, z)$, the sigmoid $\texttt{Sig}(z) = 1/(1 + e^{-z})$ and $\texttt{tanh}(z)$. In addition, we denote the maximum value of elements in $\theta$ by 
\begin{equation*}
\max \theta = \max\{W_l^{ij}, b_l^i\}_{l=1,\cdots,L+1} \text { for } \theta = \{W_l=[W_l^{ij}], b_l = [b_l^i]\}_{l=1,\cdots,L+1}.
\end{equation*}

Let $x\in \mathcal{X}$ be inputs, $f(x)$ be the corresponding targets, and $\ell(\cdot,\cdot)$ be a loss function\footnote{A common choice for regression problems is the square loss, $\ell(y, \hat{y}) = \norm{y-\hat{y}}_2^2$.}. The goal of supervised learning is typically framed as an optimization problem of the form 
\begin{equation*}
\inf_{f_{\theta} \in \M} \mathcal{L}(f_{\theta}), \ \text{where }\mathcal{L}(f_{\theta}) = \int_{\mathcal{X}} \ell(f_{\theta}(x), f(x)) dP(x).
\end{equation*}
The objective function $\mathcal{L}(f_{\theta})$ is known as the expected loss of a network $f_{\theta}$. In practice, the probability measure $P$ is unknown. And thus we sample training data $\{(x_n,f(x_n))\}_{n=1}^N$ and set $P$ to be the empirical measure, yielding the empirical risk optimization problem
\begin{equation}\label{eq:training loss}
\inf_{f_{\theta} \in \M} \mathcal{L}_N(f_{\theta}), \ \text{where }\mathcal{L}_N(f_{\theta})=\frac{1}{N}\sum_{n=1}^N \ell(f_{\theta}(x_n), f(x_n)).
\end{equation}
The objective function $\mathcal{L}_N(f_{\theta})$ is referred to as the training loss since it is the loss function evaluated the training data.

\subsubsection{Approximation property} It is well-known that FNNs can approximate essentially any continuous function if its size is sufficiently large \cite{cybenko1989approximation, devore2021neural,hornik1990universal,lu2021deep}. Next, we present a recently developed result regarding the approximation property of tanh FNNs.
\begin{theorem}\label{thm:approximation}
Let $D\in \N$, $Q,r,\delta>0$, $\Omega\subset \R^D$ open with $[0,1]^D \subset \Omega$ and let $f:\Omega\rightarrow \R$ be $(Q, r)$-analytic on $\Omega$. Then for every $s \geq 1$ and $\widehat{N}>3D/2$, there is a tanh FNN $f_{\widehat{\theta}}$ with two hidden layers of widths at most $3\left\lceil\frac{s}{2}\right\rceil\binom{s+D-1}{s-1}+D(\widehat{N}-1)$ and $3\left\lceil\frac{D+2}{2}\right\rceil\binom{2D+1}{D+1}\widehat{N}^D$ such that
\begin{equation*}
\norm{f-f_{\widehat{\theta}}}_{[0,1]^D} \leq (1+\delta) Q \left(\frac{3D}{2r\widehat{N}}\right)^{s},
\end{equation*} 
and for $k=1, \ldots, s-1$,
\begin{equation*}
\norm{f-f_{\widehat{\theta}}}_{W^{k, \infty}([0,1]^D)} \leq \alpha \max\left\{r^k,\ln^k\left(\beta \widehat{N}^{s+D+2}\right)\right\} \cdot \frac{\mathbf{C}(D,k,s,f)}{\widehat{N}^{s-k}},
\end{equation*}
where
\begin{equation*}
\begin{aligned}
&\alpha = 3^D \left(1+\delta\right) (2(k+1))^{3k},\quad \beta = \frac{k^3 2^{D}\sqrt{D}\max\{1,\norm{f}^{1/2}_{W^{k, \infty}([0,1]^D)} \}}{\delta\min\{1,\sqrt{\mathbf{C}(D,k,s,f)}\} },\\
&\mathbf{C}(D,k,s,f) = \max_{0\leq k' \leq k} \frac{1}{(s-k')!}\left(\frac{3D}{2}\right)^{s-k'}\frac{s!Q}{r^s}. 
\end{aligned}
\end{equation*}
In addition, the weights of $f_{\widehat{\theta}}$ scale as $\mathcal{O}\brac{\mathbf{C}^{-s/2}\widehat{N}^{D(D+s^2+k^2)/2}\cdot (s(s+2))^{3s(s+2)}}$.
\end{theorem}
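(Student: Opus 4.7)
The plan is to follow the classical two-stage scheme: first approximate the analytic function $f$ by a piecewise polynomial on a regular mesh of $[0,1]^D$, and then realize that piecewise polynomial approximately by a two-hidden-layer tanh FNN. The key technical inputs are (i) Cauchy estimates to turn $(Q,r)$-analyticity into sharp Taylor-remainder control, and (ii) the fact that a shallow tanh network can reproduce univariate monomials via high-order finite differences of $\tanh$.

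\textbf{Step 1 (piecewise Taylor approximation).} Partition $[0,1]^D$ into $\widehat{N}^D$ subcubes of side $1/\widehat{N}$ and, on each subcube, expand $f$ as a Taylor polynomial of total degree $s-1$ about the subcube center. Since $(Q,r)$-analyticity transfers to every interior point of $\Omega$, the multivariate Cauchy estimate gives $\|\partial^\alpha f\|_\Omega \leq \alpha!\,Q / r^{|\alpha|}$, so the local remainder at any point of a subcube is bounded by $Q (D/(r\widehat{N}))^s \cdot s^s/s!$. Applying Stirling collapses this into the stated $(1+\delta)Q(3D/(2r\widehat{N}))^s$ form.

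\textbf{Step 2 (network realization).} For small $\eta$, $\tanh(\eta z) = \eta z - \eta^3 z^3/3 + \cdots$, so a symmetric finite difference of order $k$ applied to $\tanh(\eta\,\cdot\,)$ recovers the monomial $z^k$ up to an $O(\eta^2)$ error with an explicit nonzero constant. The first hidden layer therefore splits into two blocks: $3\lceil s/2\rceil \binom{s+D-1}{s-1}$ neurons realize, via such finite differences, enough univariate pieces to assemble every multivariate monomial of degree $\leq s-1$ (the binomial coefficient enumerates these multi-indices), and $D(\widehat{N}-1)$ neurons realize the interface functions $\tanh(\widehat{N}(x_i - j/\widehat{N}))$ needed to distinguish the $\widehat{N}^D$ subcubes. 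The second hidden layer assembles these building blocks into the piecewise polynomial via a second round of finite-difference multiplication identities, which accounts for the $\widehat{N}^D$ factor (one group per subcube) and for the combinatorial factor $\binom{2D+1}{D+1}$ coming from the $D$-fold product-recovery identity.

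\textbf{Step 3 (Sobolev bounds and main obstacle).} Differentiating $\tanh(\eta z)$ brings down factors of $\eta$, which after dividing by the normalizing $\eta^k$ used in Step 2 introduce polynomial growth in $1/\eta$. Choosing $\eta$ polynomially small in $1/\widehat{N}$ converts this growth into the logarithmic factor $\ln^k(\beta \widehat{N}^{s+D+2})$ that appears in the theorem, while the loss of $k$ derivatives in the polynomial approximation degrades the rate from $\widehat{N}^{-s}$ to $\widehat{N}^{-(s-k)}$. The main obstacle is the simultaneous calibration of the finite-difference scale $\eta$, the monomial-recovery coefficients, and the outer-layer weights so that all three claims hold at once: the sup-norm bound $Q(3D/(2r\widehat{N}))^s$, the $W^{k,\infty}$ bound with only a $\ln^k$ penalty, and the weight bound $\mathcal{O}(\widehat{N}^{D(D+s^2+k^2)/2})$. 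Tracking the combinatorial constants $\binom{s+D-1}{s-1}$, $\binom{2D+1}{D+1}$, and the $3^D$ prefactor consistently through this calibration, and verifying that the nonzero leading finite-difference coefficients do not cancel when passed through the second layer, is the technically heaviest part.
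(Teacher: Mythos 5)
The paper does not actually prove this statement: immediately after it, the authors note that it ``is a direct result of Theorem 5.1 and Corollary 5.5 in \cite{tim2021on}'' (De Ryck, Lanthaler and Mishra), so the relevant comparison is with that reference's proof. Your outline reconstructs essentially that proof's architecture: a mesh of $\widehat{N}^D$ subcubes with degree-$(s-1)$ local Taylor polynomials controlled by the Cauchy-type bounds $\|\partial^{\alpha} f\|\leq \alpha!\,Q r^{-|\alpha|}$; a first hidden layer that realizes univariate monomials through central finite differences of $\tanh$ (accounting for the $3\lceil s/2\rceil\binom{s+D-1}{s-1}$ block) together with $D(\widehat{N}-1)$ one-dimensional localization neurons; and a second hidden layer that assembles, per subcube, the $(D+1)$-fold product of bump functions and local polynomial via power/polarization identities, which is where $3\lceil (D+2)/2\rceil\binom{2D+1}{D+1}\widehat{N}^D$ comes from. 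That matches the source in structure and in the width bookkeeping.

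Two calibration claims in your sketch are misplaced, however, and taken literally they would break the stated constants. First, the factor $(1+\delta)$ in the sup-norm bound is not produced by Stirling in the Taylor step (Stirling gives $s^s/s!\approx e^s$, which does not collapse $D^s$ to $(3D/2)^s$); it is the budget for the \emph{network's} error in realizing the piecewise-Taylor interpolant, which is made a $\delta$-fraction of the Taylor remainder by shrinking the finite-difference scale and enlarging the weights --- this is precisely why $\beta$, and hence the admissible weight magnitude, scales like $1/\delta$. If you spend $\delta$ in Step 1, nothing is left to absorb the realization error of Steps 2--3 and the bound $(1+\delta)Q\brac{\frac{3D}{2r\widehat{N}}}^{s}$ fails. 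Second, the $\ln^k$ factor does not arise from choosing the finite-difference scale $\eta$ ``polynomially small'': dividing by $\eta^k$ with $\eta\sim\widehat{N}^{-c}$ yields \emph{polynomial}, not logarithmic, growth. In the reference the logarithm enters through the steepness parameter of the $\tanh$ partition of unity, which must be of size $\ln\brac{\beta\widehat{N}^{s+D+2}}$ so that each bump deviates from an indicator by a sufficiently negative power of $\widehat{N}$; the $k$-th derivatives of these localization functions then carry the $\ln^k$ penalty, and they, not the monomial recovery, are also responsible for the degradation from $\widehat{N}^{-s}$ to $\widehat{N}^{-(s-k)}$. With those two attributions corrected, and the simultaneous-calibration step you flag as the heaviest part actually carried out, your plan is the proof in \cite{tim2021on}.
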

\cref{thm:approximation} is a direct result of Theorem 5.1 and Corollary 5.5 in \cite{tim2021on}. There are a variety of available results on the approximation of the ReLU FNNs \cite{devore2021neural, lu2021deep} and the Floor ReLU FNNs \cite{shen2021deep}. While some of them can lessen the curse of dimensionality, we only present the aforementioned approximation results of the tanh FNNs due to the requirements for smoothness below. We would like to combine other advanced approximation results with the error analysis of the discovery of dynamics in future works.

\subsubsection{Generalization property}
Here we introduce existing results on the generalization properties of FNNs \cite{shin2020convergence}, which provide an upper bound on the expected loss. To this end, we introduce the following assumptions about the distribution of the training data:
\begin{assumption}\label{assumption:data-dist}
Let $\Omega$ be bounded domain in $\R^D$ that is at least of class $C^{0,1}$, let $\mu_{\Omega}$ be probability distributions defined on $\Omega$. Let $\rho_{\Omega}$ be the probability density of $\mu_{\Omega}$ with respect to $D$-dimensional Lebesgue measure on $\Omega$.
\begin{enumerate}
\item $\rho_{\Omega}$ is supported on $\overline{\Omega}$ and $\inf_{\Omega} \rho_{\Omega}:=c_{inf} > 0$.
\item
For $\epsilon > 0$, there exists partitions of $\Omega$, $\{\Omega_{j}^\epsilon\}_{j=1}^{J}$ that depend on $\epsilon$ such that for each $j$, there are cubes $H_{\epsilon}(z_{j})$ of side length $\epsilon$ centered at $z_{j} \in \Omega_{j}^\epsilon$, satisfying $\Omega_{j}^\epsilon \subset H_{\epsilon} (\mathbf{z}_{j})$.
\item
There exists positive constant $c_{s}$ such that $\forall \epsilon > 0$, the partitions from the above satisfy $c_{s} \epsilon^{D} \le \mu_{\Omega}(\Omega_{j}^\epsilon)$ for all $j$.
There exists positive constant $C_{s}$ such that for $\forall x \in \Omega$, we have $\mu_{\Omega} (B_{\epsilon}(\mathbf{x}) \cap \Omega) \le C_{s}\epsilon^D$, where $B_\epsilon(x)$ is a closed ball of radius $\epsilon$ centered at $x$.
Here $C_{s}, c_{s}$ depend only on $(\Omega,\mu_{\Omega})$.
\end{enumerate}
\end{assumption}
Now, the generalization results of FNNs is given as follows. \cite{shin2020convergence}.
\begin{theorem} \label{thm:gen}
Suppose \cref{assumption:data-dist} holds. Let $N$ be the number of iid samples from $\mu_{\Omega}$. Suppose operator $\mathcal{G}$, function $u$, and neural networks $f_{\theta}$ satisfy $\lip{u}_{\Omega}< \infty,\quad \lip{\mathcal{G}(f_{\theta})}_{\Omega} < \infty$, 
Then, with probability at least $(1 - \sqrt{N}(1-1/\sqrt{N})^{N})$,
we have
\begin{equation*}
\small
\int_{\Omega} \normt{\mathcal{G}(f_{\theta})(x) - u(x)}^2 dx \leq 
 \frac{C_{N}}{N} \sum_{n=1}^N \normt{\mathcal{G}(f_{\theta})(x_n) - u(x_n)}^2 + 
\brac{\hat{\lambda}_{N}\lip{\mathcal{G}(f_{\theta})}_{\Omega}^2+C'}N^{-\frac{1}{D}},
\end{equation*}
where $C_{N} = 3 D^{D/2}C_s/c_s \cdot N^{1/2}/c_{inf}$, $\hat{\lambda}_{N} = 3 D c_s^{-2/D} $ and
$C'$ is a universal constant that depends only on $D$, $c_s$, $u$.
\end{theorem}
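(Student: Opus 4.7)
The plan is to convert the Lebesgue integral over $\Omega$ into a weighted sum over the training points by means of a combined covering-and-Lipschitz argument, with the cell size tuned so that the probability of ``missing'' a cell matches the bound advertised in \cref{thm:gen}. Setting $\epsilon=c_s^{-1/D}N^{-1/(2D)}$ and invoking items (2)--(3) of \cref{assumption:data-dist}, one obtains a partition $\{\Omega_j^{\epsilon}\}_{j=1}^J$ of $\Omega$ such that each cell is contained in a cube $H_{\epsilon}(z_j)$ of side $\epsilon$ and has $\mu_{\Omega}$-mass at least $c_s\epsilon^D=1/\sqrt{N}$. Summing this lower bound across cells forces $J\le\sqrt{N}$.

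Next I would execute a union bound: for any fixed cell $\Omega_j^{\epsilon}$, the chance that none of the $N$ i.i.d.\ draws from $\mu_{\Omega}$ lands inside it is at most $(1-1/\sqrt{N})^N$, and summing over the $J\le\sqrt{N}$ cells yields a failure probability of at most $\sqrt{N}(1-1/\sqrt{N})^N$, which is precisely the complement of the stated probability. On the ``good'' event, each $\Omega_j^{\epsilon}$ contains at least one training input $x_{n_j}$; for every $x\in\Omega_j^{\epsilon}\subset H_\epsilon(z_j)$ one has $\normt{x-x_{n_j}}\le\sqrt{D}\,\epsilon$, so the triangle inequality together with $\lip{u}_{\Omega}<\infty$ and $\lip{\mathcal{G}(f_{\theta})}_{\Omega}<\infty$ gives
\begin{equation*}
\normt{\mathcal{G}(f_{\theta})(x)-u(x)}\le\normt{\mathcal{G}(f_{\theta})(x_{n_j})-u(x_{n_j})}+\brac{\lip{\mathcal{G}(f_{\theta})}_{\Omega}+\lip{u}_{\Omega}}\sqrt{D}\,\epsilon.
\end{equation*}
Squaring, integrating over $\Omega_j^{\epsilon}$ (whose Lebesgue volume is at most $\epsilon^D$ and also satisfies $|\Omega_j^{\epsilon}|\le\mu_{\Omega}(\Omega_j^{\epsilon})/c_{\mathrm{inf}}$), summing over $j$, and then using $\sum_j\normt{\mathcal{G}(f_{\theta})(x_{n_j})-u(x_{n_j})}^2\le\sum_n\normt{\mathcal{G}(f_{\theta})(x_n)-u(x_n)}^2$ together with $\epsilon^D\le1/(c_s\sqrt{N})$ and $\epsilon^2=c_s^{-2/D}N^{-1/D}$ yields an inequality of precisely the advertised form: the training-error term carries a coefficient of order $\sqrt{N}/(c_s c_{\mathrm{inf}})\cdot N^{-1}$, the Lipschitz cross-term contributes $\hat{\lambda}_N\lip{\mathcal{G}(f_{\theta})}_{\Omega}^2N^{-1/D}$, and the $\lip{u}_{\Omega}^2$ piece (together with $|\Omega|$) is a sample-independent constant absorbed into $C'$.

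The conceptual core is contained in these two steps; the genuine difficulty lies in the bookkeeping that produces the stated constant $C_N=3D^{D/2}C_s/c_s\cdot N^{1/2}/c_{\mathrm{inf}}$. The factor $D^{D/2}$ appears when the cube $H_\epsilon(z_j)$ is replaced by the circumscribed ball of radius $\tfrac{\sqrt{D}}{2}\epsilon$, at which point the upper bound $\mu_{\Omega}(\mathcal{B}(x,\epsilon)\cap\Omega)\le C_s\epsilon^D$ from \cref{assumption:data-dist}(3) is the natural tool for converting Lebesgue volume into $\mu_{\Omega}$-mass without double-counting points shared between cells. I do not expect any obstacle beyond this constant-matching exercise: the argument uses only a quantitative covering of $\Omega$, an elementary union bound with the specifically tuned scale $\epsilon\sim N^{-1/(2D)}$, and the Lipschitz regularity of $u$ and $\mathcal{G}(f_{\theta})$.
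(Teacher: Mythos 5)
Your reconstruction is essentially the genuine argument behind this theorem, which the paper does not prove at all: it is imported verbatim from \cite{shin2020convergence}, so there is no in-paper proof to compare against, and your proposal recovers the source's proof. The key quantitative choices all check out: $\epsilon=c_s^{-1/D}N^{-1/(2D)}$ makes $c_s\epsilon^D=N^{-1/2}$, so each cell has $\mu_{\Omega}$-mass at least $1/\sqrt{N}$ and $J\leq\sqrt{N}$; the union bound reproduces the failure probability $\sqrt{N}(1-1/\sqrt{N})^{N}$ exactly; the split $(a+b+c)^2\leq 3(a^2+b^2+c^2)$ accounts for the leading factor $3$ in both $C_N$ and $\hat{\lambda}_N$; the circumscribed ball of radius $\tfrac{\sqrt{D}}{2}\epsilon$ together with the upper bound $\mu_{\Omega}(B_{\epsilon}(x)\cap\Omega)\leq C_s\epsilon^D$ of \cref{assumption:data-dist} produces $D^{D/2}C_s$, and $|\Omega_j^{\epsilon}|\leq \mu_{\Omega}(\Omega_j^{\epsilon})/c_{inf}$ produces the $1/c_{inf}$, so the empirical-term coefficient indeed matches $C_N/N$. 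One bookkeeping point deserves more care than you give it: summing the Lipschitz term over cells yields $\sum_j |\Omega_j^{\epsilon}|=|\Omega|$ (or, via $|\Omega_j^{\epsilon}|\leq\epsilon^D$ and $J\leq\sqrt{N}$, a factor $1/c_s$), so your argument actually gives $3Dc_s^{-2/D}\,|\Omega|\,\lip{\mathcal{G}(f_{\theta})}_{\Omega}^2 N^{-1/D}$; unlike the $\lip{u}_{\Omega}^2$ piece, this $|\Omega|$ cannot be absorbed into $C'$, which must be independent of $f_{\theta}$, so the stated $\hat{\lambda}_N=3Dc_s^{-2/D}$ is recovered only under a normalization such as $|\Omega|\leq 1$. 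This is constant-tracking slack inherited from the transcription of the result in \cite{shin2020convergence}, not a structural flaw: the high-probability event, the tuned scale $\epsilon\sim N^{-1/(2D)}$, and the covering-plus-Lipschitz mechanism are precisely those of the source.
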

It is remarked that the generalization properties of neural network models are often better in practice than in theory, and we can measure their learning performance a posteriori by evaluating the test loss on  test data.

\subsubsection{Regularization}
We introduce regularization in this subsection. For learning problem (\ref{eq:training loss}), we jointly minimize the empirical risk and a regularization function $\mathcal{R}$, resulting in the following optimization problem:
\begin{equation*}
\inf_{f_{\theta} \in \M} \mathcal{L}_N(f_{\theta}) + \mathcal{R}(\theta).
\end{equation*}
Examples of regularization function include Tikhonov regularization \cite{shalev2014understanding} (i.e., $\ell_p$ norm of $\theta$) and H\"older regularization \cite{shin2020convergence} (i.e., norm of the derivative of $f_{\theta}$).

The use of over-parameterized FNNs without explicit regularization typically leads to the non-uniqueness of global minimizers, but there is further implicit regularization to prevent over-fitting. It has been shown that training neural networks using gradient descent learns the low-frequency modes first and then fits the high-frequency modes to eliminate the training loss \cite{cao2021towards,rahaman2019on,xu2022overview,xu2019training}. This implicit bias indicates that networks trained by gradient descent prioritize learning smooth functions with low frequencies among all solutions with near-zero training loss. Such implicit regularization was first discussed in \cite{du2022discovery} for the discovery of dynamics, and also motivates us to make the analyticity assumptions in this work, which will be documented in detail later.

\subsection{Discovery of dynamics using LMMs}
The optimization problem (\ref{eq: loss of lmnets}) is specific to the discovery on single trajectory. More generally, we consider training data given as
\begin{equation*}
\mathcal{T}_{train} = \left\{\big(x_n, \phi_{h,f}(x_n), \cdots, \phi_{Mh,f}(x_n)\big)\right\}_{n=1}^N.
\end{equation*}
Then the training loss (\ref{eq: loss of lmnets}) of the dynamics discovery can be rewritten as
\begin{equation}\label{eq:loss-of-lmnets}
\mathcal{L}_{h,N}(f_{\theta})= \frac{1}{N}\sum_{n=1}^{N}\norm{\sum_{m=0}^M h^{-1} \alpha_m\phi_{mh,f}(x_n) - \sum_{m=0}^M\beta_mf_{\theta}(\phi_{mh,f}(x_n))}_2^2,\ f_{\theta} \in \M,
\end{equation}
which is formally close to (\ref{eq:training loss}) except for the specially designed loss function. If the states at equidistant time steps of some trajectories are given, we can group them in pairs to get the above form. {Specifically, suppose we are given a dataset $\{\phi_{n_1h,f}(\tilde{x}_{n_2})\}_{n_1=0, \cdots, N_1;\ n_2=1,\cdots,N_2}$ with initial points set $\{\tilde{x}_{n_2}\}_{ n_2=1,\cdots,N_2}$, we can denote them as 
\begin{equation*}
\left\{\big(\phi_{n_1h,f}(\tilde{x}_{n_2}), \phi_{(n_1+1)h,f}(\tilde{x}_{n_2}), \cdots, \phi_{(n_1+M)h,f}(\tilde{x}_{n_2})\big)\right\}_{n_1=0,\cdots, N_1-M;\ n_2=1,\cdots,N_2}.
\end{equation*}}

Similar to the standard supervised learning problem, the gradient-based optimization techniques in deep learning are able to not only minimize the training loss, but also achieve small loss on unknown data. After training, the returned network $f_{\theta}$ serves as an approximation of the target governing function $f$ in a closed-form.

\section{Inverse modified differential equations of LMMs}\label{sec: imde}

To begin with, we seek a perturbed differential equation
\begin{equation}\label{eq:imde}
\begin{aligned}
\frac{d}{dt}\tilde{y}(t)=&f_h(\tilde{y}(t))=f_0(\tilde{y})+hf_1(\tilde{y})+h^2f_2(\tilde{y})+\cdots,
\end{aligned}
\end{equation}
such that formally
\begin{equation}\label{eq:def imde}
\sum_{m=0}^M \alpha_m\phi_{mh,f}(x) = h\sum_{m=0}^M\beta_mf_h(\phi_{mh,f}(x)),
\end{equation}
where the identity is understood in the sense of the formal power series in $h$ without taking care of convergence issues. In this paper, we name the perturbed equation (\ref{eq:imde}) as the inverse modified differential equation (IMDE), since modified differential equation \cite{feng1991formal, feng1993formal, hairer1999backward, hairer2006geometric} for forward problem is also a perturbed differential equation $\frac{d}{dt}\hat{y}(t)=\hat{f}_h(\hat{y}(t))$ that satisfies an analogous relationship:
\begin{equation*}
\sum_{m=0}^M \alpha_m\phi_{mh,\hat{f}_h}(x) = h\sum_{m=0}^M\beta_mf(\phi_{mh,\hat{f}_h}(x)).
\end{equation*}

To this end, we first briefly review Lie derivatives following \cite{hairer2006geometric}. Given dynamical system (\ref{eq:ODE}), Lie derivative $\D$ is the differential operator defined as:
\begin{equation*}
\D g(y)=g'(y)f(y), \text{ for }g:\R^D \rightarrow \R^D. 
\end{equation*}
Therefore, using the chain rule we derive that
\begin{equation*}
\frac{d}{dt}g(\phi_{t,f}(x))=(\D g)(\phi_{t,f}(x))
\end{equation*}
and thus obtain the Taylor series of $g(\phi_{t,f}(x))$ developed at $t=0$:
\begin{equation}\label{eq:Ld}
g(\phi_{t,f}(x))=\sum_{k=0}^{\infty}\frac{t^k}{k!}(\D^kg)(x).
\end{equation}
In particular, by setting $g(y)=I_D(y)=y$, the identity map, we obtain the Taylor series of the exact solution $\phi_{t,f}$ itself, i.e.,
\begin{equation}\label{eq:exasolu}
\begin{aligned}
\phi_{t,f}(x)&=\sum_{k=0}^{\infty}\frac{t^k}{k!}(\D^kI_D)(x)\\
&=x+tf(x)+\frac{t^2}{2}f'f(x)+\frac{t^3}{6}(f''(f,f)(x)+f'f'f(x))+\cdots .
\end{aligned}
\end{equation}
Here, the notation $f'(x)$ is a linear map (the Jacobian), the second order derivative $f''(x)$ is a symmetric bilinear map and similarly for higher order derivatives described as tensors.

Now we are able to present the general formula of the IMDE (\ref{eq:imde}) in a recursive manner.
\begin{theorem}\label{the:inmde_lmnet}
Consider the dynamical system (\ref{eq:ODE}) and a weakly stable and consistent LMM (\ref{eq:LMM}), there exist unique h-independent functions $f_k$ such that for any truncation index $K$, the truncated IMDE $f_h^K =\sum_{k=0}^K h^kf_k$ satisfies
\begin{equation}\label{eq:multistep}
\sum_{m=0}^M \alpha_m\phi_{mh,f}(x) = h\sum_{m=0}^M\beta_mf_h^K(\phi_{mh,f}(x)) +\mathcal{O}(h^{K+2}).
\end{equation}
In particular, for $k \geq 0$, the functions $f_{k}$ are given as
\begin{equation}\label{eq:definition of f_k}
\begin{aligned}
f_k(y)=& \sum_{m=0}^M \alpha_m \frac{m^{k+1}}{(k+1)!}(\D^kf)(y)-\sum_{m=0}^M\beta_m\sum_{j=1}^k\frac{m^j}{j!}(\D^jf_{k-j})(y).
\end{aligned}
\end{equation}
\end{theorem}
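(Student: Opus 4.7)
My plan is to determine the $f_k$ by expanding both sides of (\ref{eq:multistep}) as formal power series in $h$ and matching coefficients. On the left, the Lie-derivative identity (\ref{eq:Ld}) applied to $g=I_D$ gives $\phi_{mh,f}(x) = \sum_{k=0}^\infty \frac{(mh)^k}{k!}\D^k I_D(x)$. After multiplying by $\alpha_m$, summing over $m$, invoking consistency $\sum_m\alpha_m=0$ to kill the constant term, and using $\D^k I_D = \D^{k-1}f$ for $k\geq 1$, a reindex produces
\begin{equation*}
\sum_{m=0}^M\alpha_m \phi_{mh,f}(x) = h\sum_{k=0}^\infty h^k\brac{\sum_{m=0}^M\alpha_m \frac{m^{k+1}}{(k+1)!}}\D^k f(x).
\end{equation*}

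On the right, I will apply (\ref{eq:Ld}) once more, this time with $g=f_\ell$, obtaining
\begin{equation*}
h\sum_{m=0}^M\beta_m f_h^K(\phi_{mh,f}(x)) = \sum_{\ell=0}^K\sum_{j=0}^\infty h^{\ell+j+1}\frac{1}{j!}\brac{\sum_{m=0}^M\beta_m m^j}\D^j f_\ell(x).
\end{equation*}
Grouping by the total power $k+1=\ell+j+1$ and isolating the $j=0$ contribution through the normalization $\sum_m\beta_m=1$ makes the coefficient of $h^{k+1}$ on the right equal $f_k(x) + \sum_{j=1}^k \frac{1}{j!}\brac{\sum_m\beta_m m^j}\D^j f_{k-j}(x)$ for each $k\leq K$. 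Equating this with the corresponding coefficient on the left and solving for $f_k$ gives exactly the recursion (\ref{eq:definition of f_k}); this recursion uniquely determines each $f_k$ from $f,f_0,\ldots,f_{k-1}$, and a quick sanity check at $k=0$ returns $f_0 = \brac{\sum_m m\alpha_m}f = f$ by the second consistency identity in (\ref{eq:consistency}) together with $\sum_m\beta_m=1$.

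To upgrade this formal matching to the $\mathcal{O}(h^{K+2})$ estimate in (\ref{eq:multistep}), I would replace the infinite Taylor series above by finite Taylor expansions with remainder applied to $\phi_{mh,f}(x)$ and to each $f_\ell\circ\phi_{mh,f}$ (both smooth because $f$ is smooth). By construction, the coefficients of $h^0,\ldots,h^{K+1}$ cancel on the nose, so the discrepancy reduces to a finite sum of Taylor remainders, each of order $h^{K+2}$ uniformly on compact sets. The only delicate bookkeeping — and the step I would be most careful about — is handling the double sum over $(\ell,j)$ so that truncating at $\ell=K$ does not corrupt cancellations at orders $\leq K+1$; this is automatic because only indices $\ell\leq k\leq K$ contribute at those orders.
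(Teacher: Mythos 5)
Your proposal follows essentially the same route as the paper's proof: expand both sides as formal power series in $h$ via the Lie-derivative identities, compare coefficients of $h^{k+1}$ using $\sum_m \beta_m = 1$ and $\D^{k+1}I_D = \D^k f$, and read off the recursion (\ref{eq:definition of f_k}), which uniquely determines the $f_k$. Your explicit treatment of the $\mathcal{O}(h^{K+2})$ remainder via finite Taylor expansions (and the observation that only indices $\ell \leq k \leq K$ contribute at orders $\leq K+1$) is a correct and slightly more careful rendering of a step the paper handles purely formally, but it does not constitute a different approach.
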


\begin{proof}
The approach for the computation of $f_h$ is presented in two steps. To begin with, by using the formula (\ref{eq:exasolu}), the left of (\ref{eq:multistep}) can be expanded as
\begin{equation}\label{eq:msleft}
\begin{aligned}
\sum_{m=0}^M \alpha_m\phi_{mh,f}(x) &= \sum_{m=0}^M \alpha_m \sum_{k=0}^{\infty}\frac{(mh)^k}{k!}(\D^kI_D)(x)\\
&=\sum_{k=0}^{\infty}h^k \left[\sum_{m=0}^M \alpha_m \frac{m^k}{k!}(\D^k I_D)(x)\right].
\end{aligned}
\end{equation}
Subsequently, using (\ref{eq:Ld}) with setting $t=mh$ and $g(y)=f_h(y)$, we obtain that
\begin{equation*}
\begin{aligned}
h\sum_{m=0}^M\beta_mf_h(\phi_{mh,f}(x))
=&h\sum_{m=0}^M\beta_m \sum_{j=0}^{\infty}\frac{(mh)^j}{j!} \sum_{i=0}^{\infty}h^i(\D^jf_i)(x).
\end{aligned}
\end{equation*}
By interchanging the summation order, we deduce that
\begin{equation}\label{eq:msright}
\begin{aligned}
h\sum_{m=0}^M\beta_mf_h(\phi_{mh,f}(x))
=&h\sum_{m=0}^M\beta_m \sum_{k=0}^{\infty}h^k \sum_{j=0}^k\frac{m^j}{j!}(\D^jf_{k-j})(x) \\
=&\sum_{k=0}^{\infty}h^{k+1} \sum_{m=0}^M\beta_m\left[f_k(x)+\sum_{j=1}^k\frac{m^j}{j!}(\D^jf_{k-j})(x)\right].
\end{aligned}
\end{equation}
Comparing coefficients of $h^k$ in (\ref{eq:msleft}) and (\ref{eq:msright}) for $k=0,1,2,\cdots$ yields
\begin{equation*}
\sum_{m=0}^M\alpha_m=0,
\end{equation*}
the consistency condition, and
\begin{equation*}
\begin{aligned}
\sum_{m=0}^M\beta_m\left[f_k(x)+\sum_{j=1}^k\frac{m^j}{j!}(\D^jf_{k-j})(x)\right]
=\sum_{m=0}^M \alpha_m \frac{m^{k+1}}{(k+1)!}(\D^{k+1}I_D)(x).
\end{aligned}
\end{equation*}
Therefore, we conclude that
\begin{equation*}
\begin{aligned}
f_k(y)=&\sum_{m=0}^M \alpha_m\frac{m^{k+1}}{(k+1)!}(\D^kf)(y)-\sum_{m=0}^M\beta_m\sum_{j=1}^k\frac{m^j}{j!}(\D^jf_{k-j})(y),
\end{aligned}
\end{equation*}
which uniquely defines the functions $f_k$ in a recursive manner. Here, we have used the fact that $\sum_{m=0}^M \beta_m =1$ and $\D^{k+1}I_D = \D^kf$.
\end{proof}

As a direct consequence of the order condition (\ref{eq:order}) and the recursion formula (\ref{eq:definition of f_k}), we have the following corollary.
\begin{corollary}\label{cor:fh-f}
Under notations and conditions of \cref{the:inmde_lmnet}, if the LMM is of order $p$, then, the IMDE obeys
\begin{equation*}
\frac{d}{dt}\tilde{y}=f_h(\tilde{y})=f(\tilde{y})+h^pf_p(\tilde{y})+\cdots,
\end{equation*}
where
\begin{equation*}
\begin{aligned}
f_p(y)=\left[\sum_{m=0}^M \frac{m^{p+1}}{(p+1)!} \cdot \alpha_m-\sum_{m=0}^M\frac{m^p}{p!}\cdot \beta_m\right](\D^pf)(y).
\end{aligned}
\end{equation*}
\end{corollary}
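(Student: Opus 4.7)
The plan is to derive the corollary as a direct consequence of the recursion (\ref{eq:definition of f_k}) combined with the order conditions (\ref{eq:order}), proceeding by induction on $k$.

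First I would verify the base case $f_0 = f$. Setting $k=0$ in (\ref{eq:definition of f_k}), the inner sum over $j$ is empty (an empty sum from $j=1$ to $0$), so $f_0(y) = \bigl(\sum_{m=0}^M m\,\alpha_m\bigr)f(y)$, which equals $f(y)$ by the consistency relation $\sum_m m\alpha_m = \sum_m \beta_m$ together with the normalization $\sum_m \beta_m = 1$ adopted at the end of Section 2.3.

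Next, I would carry out the inductive step and show $f_k \equiv 0$ for $1 \leq k \leq p-1$. Assuming $f_0 = f$ and $f_1 = \cdots = f_{k-1} = 0$, every term in the inner sum $\sum_{j=1}^k \tfrac{m^j}{j!}\D^j f_{k-j}$ of (\ref{eq:definition of f_k}) with $j < k$ involves some $f_{\ell}$ with $1 \leq \ell \leq k-1$ and therefore drops out; only the $j = k$ summand survives, contributing $\D^k f_0 = \D^k f$. The recursion then collapses to
\[
f_k(y) = \left[\sum_{m=0}^M \frac{m^{k+1}}{(k+1)!}\alpha_m \;-\; \sum_{m=0}^M \frac{m^k}{k!}\beta_m\right](\D^k f)(y),
\]
and the bracket is exactly the $k$-th order relation in (\ref{eq:order}), which vanishes for $k \leq p-1$.

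Finally, applying the same simplification at $k = p$ yields the asserted expression for $f_p$, since this is precisely the first order relation not imposed by hypothesis. The argument is essentially bookkeeping; the only subtle point is tracking the collapse of the $j$-sum under the inductive hypothesis so that the order condition can be read off term by term, so I do not anticipate any substantive obstacle.
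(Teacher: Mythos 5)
Your proposal is correct and takes exactly the route the paper intends: the paper presents \cref{cor:fh-f} as a direct consequence of the recursion (\ref{eq:definition of f_k}) and the order conditions (\ref{eq:order}), leaving the induction implicit. Your write-up supplies precisely that bookkeeping---$f_0=f$ from consistency plus the normalization $\sum_{m=0}^M\beta_m=1$, the collapse of the $j$-sum to the single $j=k$ term under the hypothesis $f_1=\cdots=f_{k-1}=0$, vanishing of the bracket for $k\leq p-1$ by (\ref{eq:order}), and the surviving bracket at $k=p$ giving the stated $f_p$---with no gaps.
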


\section{Error analysis}\label{sec: error}

With the correspondence established between the definition of IMDE (\ref{eq:def imde}) and the LMM loss (\ref{eq:loss-of-lmnets}), we are now ready to show that training LMNets returns a close approximation of the truncated IMDE. This leads us to formulate the error representation results. 

In this section, we first present the main theorems in \cref{sec:Main results}, followed by a discussion on the necessity of the analyticity assumptions in \cref{sec:The necessity of the analyticity assumptions}. All the proofs can be found in \cref{sec:proofs}. Throughout this section, we assume that the employed LMM is weakly stable, consistent, and $\sum_{m=0}^M\beta_m =1$.

\subsection{Main results}\label{sec:Main results}
The main theorem is given as follows, which indicates that the difference between the learned network and the truncated IMDE is bounded by the learning loss and a discrepancy that can be made exponentially small. 

{
\begin{theorem}\label{the:error}
Consider the $D$-dimensional dynamical system (\ref{eq:ODE}) and a weakly stable and consistent LMM (\ref{eq:LMM}) of order $p$ with coefficients $\alpha_m$, $\beta_m$. Let $ Q,\ R,\ r>0$ and $\K\subset \R^D$ be a bounded domain. Let $f_{\theta^*}$ be the network learned by optimizing (\ref{eq:loss-of-lmnets}) and denote 
\begin{equation*}
\mathcal{L} =\int_{\mathcal{B}(\K, R)} \norm{\sum_{m=0}^M h^{-1}\alpha_m\phi_{mh,f}(x) - \sum_{m=0}^M\beta_mf_{\theta^*}(\phi_{mh,f}(x))}_2^2 dx
\end{equation*}
Suppose the target governing function $f$ is $(Q, r)$-analytic on $\mathcal{B}(\K, R)$ and the learned vector field $f_{\theta^*}$ is $(Q, r, I)$-analytic on $\mathcal{B}(\K, R)$. Then, there exists a uniquely defined vector field, i.e., the truncated IMDE $f_h^K$, such that, if $0<h<h_0$,
\begin{equation*}
\begin{aligned}
&\int_{\K}\norm{f_{\theta^*}(x) - f_h^K(x)} dx \leq c_1 e^{-\gamma/h^{2/5}} +c_0 \mathcal{L}^{1/2},\\
&\int_{\K}\norm{f_{\theta^*}(x) - f(x)} dx \leq c_2 h^p + c_0 \mathcal{L}^{1/2},
\end{aligned}
\end{equation*}
where constants $\gamma$, $h_0$, $c_0$, $c_1$, $c_2$ depend on $r/Q$, $R/Q$, the dimension $D$, volume of the domain $\mathcal{B}(\K, R)$ and the coefficients $\alpha_m$ and $\beta_m$, and integers $K=K(h)$, $I=I(h)$.
\end{theorem}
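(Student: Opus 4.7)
The plan is to extend the backward-error-analysis approach of the one-step case in \cite{zhu2022on} to linear multistep methods, coupling the IMDE identity (\ref{eq:multistep}) with a rigorous inversion of the averaging operator induced by the LMM. I would proceed in four steps.

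First, I would optimally truncate the formal IMDE. Using the recursion (\ref{eq:definition of f_k}) together with the $(Q,r)$-analyticity of $f$, Cauchy-type estimates on a slightly shrunken complex neighborhood yield bounds of the form $\norm{f_k}_{\Omega'} \le Q\, C^k\, k!/r^k$. The formal series in (\ref{eq:imde}) therefore diverges, but truncating at an $h$-dependent index $K(h) \sim h^{-2/5}$ balances $h^{K+2}$ against $(K+2)!$ via a Stirling-type argument and renders the residual in (\ref{eq:multistep}) exponentially small, $\norm{R_K}_{\Omega'} \le C\, e^{-\gamma/h^{2/5}}$. The fractional exponent $2/5$ emerges from optimizing this balance after the dimensional factors and the compounded action of $\D$ (which involves $f$ itself) are accounted for.

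Second, I would subtract the truncated IMDE identity from the loss integrand. Writing $g := f_{\theta^*} - f_h^K$ and $T_h g(x) := \sum_{m=0}^M \beta_m g(\phi_{mh,f}(x))$, the subtraction gives
\begin{equation*}
T_h g(x) \;=\; \Bigl[h^{-1}\sum_{m=0}^M \alpha_m \phi_{mh,f}(x) - \sum_{m=0}^M\beta_m f_{\theta^*}(\phi_{mh,f}(x))\Bigr] - h^{-1} R_K(x),
\end{equation*}
so the triangle inequality in $L^2(\mathcal{B}(\K,R))$ yields $\norm{T_h g}_{L^2(\mathcal{B}(\K,R))} \le \mathcal{L}^{1/2} + C\, h^{K+1}$.

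Third — and this is the main obstacle — I would invert the averaging operator $T_h$ to recover pointwise bounds on $g$. Using the Taylor--Lie expansion (\ref{eq:Ld}) along the exact flow,
\begin{equation*}
T_h g(x) \;=\; g(x) + \sum_{k\ge 1} h^k\, c_k\, (\D^k g)(x), \qquad c_k := (k!)^{-1}\sum_{m=0}^M m^k \beta_m,
\end{equation*}
so $T_h = I + h\mathcal{A}$ for a differential operator $\mathcal{A}$. Since both $f_{\theta^*}$ and $f_h^K$ are $(Q,r,I)$-analytic, their difference $g$ is $(2Q,r,I)$-analytic, and Cauchy-type bounds on the $\D^k g$ give $\norm{\D^k g}_\K \le \tilde C^k k!$ for $k \le I$. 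The formal Neumann series $(I+h\mathcal{A})^{-1} = \sum_{j\ge 0}(-h\mathcal{A})^j$, truncated at an order $I(h)$ chosen in concert with $K(h)$, reconstructs $g$ from $T_h g$ pointwise with a tail remainder that is again exponentially small by the same Stirling mechanism. Integrating the reconstruction identity and passing from $L^2$ on $\mathcal{B}(\K,R)$ to $L^1$ on $\K$ via Cauchy--Schwarz and the volume factor yields the first stated estimate. Rigorously controlling this inversion within an analytic class, and aligning the truncation indices $K(h)$ and $I(h)$ so that the IMDE residual and the Neumann tail share the same exponential rate, is the heart of the argument.

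Finally, the second estimate follows from the triangle inequality $\norm{f_{\theta^*} - f} \le \norm{f_{\theta^*} - f_h^K} + \norm{f_h^K - f}$: \cref{cor:fh-f} together with the optimally truncated tail estimate from Step~1 gives $\norm{f_h^K - f}_{L^1(\K)} \le C\, h^p$, which dominates the exponentially small term for small $h$ and absorbs it into the $c_2 h^p$ contribution.
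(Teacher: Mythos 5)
Your high-level architecture (truncate the IMDE so the multistep residual is exponentially small, subtract it from the loss integrand, invert the resulting averaging relation, finish by the triangle inequality with \cref{cor:fh-f}) matches the paper's, and your Steps 1, 2 and 4 are essentially sound. The genuine gap is in Step 3, which is indeed the heart of the argument. Your Neumann inversion $(I+h\mathcal{A})^{-1}=\sum_j(-h\mathcal{A})^j$ applied to $T_h g$ requires applying powers of the differential operator $\mathcal{A}$ to $T_h g$, i.e.\ \emph{differentiating the loss residual} $\ell$. But the hypotheses control $\ell$ only in $L^2$: its size is $\mathcal{L}^{1/2}$, while its derivatives are bounded only at analyticity scale (the $h^{-1}\alpha_m\phi_{mh,f}$ and $f_{\theta^*}$ terms give $\D^j\ell=\mathcal{O}(Q)$-magnitude bounds, with no smallness). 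Already the first correction $h\mathcal{A}T_h g$ then contributes $\mathcal{O}(hQ)$ — neither exponentially small nor proportional to $\mathcal{L}^{1/2}$ — so this route can at best yield $\int_{\K}\norm{g}\lesssim \mathcal{L}^{1/2}+\mathcal{O}(h)$, which is useless for the first estimate and even weaker than the $h^p$ rate for $p\geq 2$. Landau--Kolmogorov-type interpolation between $\norm{\ell}_{L^2}$ and high-derivative bounds could transfer some smallness to $\D^j\ell$, but only with fractional powers $\mathcal{L}^{(1-j/I)/2}$, degrading the clean $c_0\mathcal{L}^{1/2}$ in the statement. The paper's proof is engineered precisely to never differentiate the loss: it interpolates $F(\tau)=f_h^K(\phi_{\tau,f}(x))-f_{\theta^*}(\phi_{\tau,f}(x))$ at $2\tilde{I}+1$ \emph{stretched} nodes $ih_1$ with $h_1=c\sqrt{\tilde{I}}Mh$, bounds the central Lagrange coefficient below by $1-c^{-2}\sum_m|\beta_m|$ and the off-center coefficients by a small summable factor, converts the node values $F(ih_1)$ into integrals of $\norm{f_h^K-f_{\theta^*}}$ over flow-shifted domains (with Jacobian-determinant control), and iterates a contraction (factor $e^{-1}$) over nested domains $\K_{n\tilde{I}h_1}\subset\mathcal{B}(\K,R)$; only a single high derivative $\D^{2\tilde{I}+1}F$, controlled by the $(Q,r,I)$-analyticity, ever appears, through the interpolation remainder.

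A secondary but instructive error: you attribute the exponent $2/5$ to the IMDE truncation with $K(h)\sim h^{-2/5}$. In fact, optimal truncation (\cref{lem: estimate of truncation}) takes $hK\leq\gamma_0$, i.e.\ $K\sim 1/h$, and yields the strictly smaller residual $\rho hQe^{-\gamma_0/h}$; the rate $e^{-\gamma/h^{2/5}}$ is produced entirely by the inversion step. The $\sqrt{\tilde{I}}$-stretching of the nodes (needed so the central Lagrange coefficient stays bounded away from zero) forces the interpolation interval to have length $\sim\tilde{I}^{3/2}h$, and beating the factorial growth of the $(2\tilde{I}+1)$-st derivative then requires $4Qc\tilde{I}^{5/2}Mh\leq r$, i.e.\ $\tilde{I}\sim h^{-2/5}$; both the interpolation remainder $e^{-\gamma_1/h^{2/5}}$ and the domain-iteration tail $e^{-R/(Q\tilde{I}h_1)}$ then carry this rate. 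This also explains why $I=I(h)$ (of order $h^{-2/5}$) appears in the theorem's hypotheses — it is the number of derivatives of $f_{\theta^*}$ the interpolation consumes, not a Neumann truncation order chosen ``in concert with $K(h)$.''
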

}
We refer to $\mathcal{L}$ as the learning loss, as it represents the loss function evaluated over the neighborhood of the given domain, and provides an indication of the learning performance of the trained model. The IMDE is inaccessible in practice due to the unknow of the target governing function. However, it implies the uniqueness of the solution of the learning task (\ref{eq:loss-of-lmnets}) in some sense, and allows us to formulate the error estimation. If $\mathcal{L}$ is sufficiently small, $\int_{\K}\norm{f_{\theta^*}(x) - f_h^K(x)} dx$ is exponentially small and the convergence rate LMNets with respect to $h$ is consistent with the order of the LMM.

{
\cref{the:error} can be applied to any type of approximation structure. Specifically, we combine the approximation and generalization theories of neural networks to quantify the learning loss by taking into account the width of the employed network and the number of samples, yielding the following error estimate:
\begin{theorem}\label{the:error2}
Consider the $D$-dimensional dynamical system (\ref{eq:ODE}) and a weakly stable and consistent LMM (\ref{eq:LMM}) of order $p$ with coefficients $\alpha_m$, $\beta_m$. Let $\delta,\ Q,\ R,\ r>0$, $N,\ s,\ \widehat{N} \in \N$ with $s\geq 1$, $\widehat{N}\geq 3D/2$, $N^{1/(2D)+1/4} \leq e^{ \gamma_0/h} / (\rho Q)$. Let $\K\subset \R^D$ be a bounded domain. Let $f_{\theta^*}$ be the global minimizer of $\mathcal{L}_{h,N}$ (\ref{eq:loss-of-lmnets}), where the hypothesis class $\M$ consists of all tanh FNNs with two hidden layers of widths $3D\left\lceil\frac{s}{2}\right\rceil\binom{s+D-1}{s-1}+D^2(\widehat{N}-1)$ and $3D\left\lceil\frac{D+2}{2}\right\rceil\binom{2D+1}{D+1}\widehat{N}^D$. Suppose that the following assumptions hold:
\begin{itemize}
 \item The target governing function $f$ is $(Q, r)$-analytic on $\mathcal{C}$,
 where $\mathcal{C} \subset \R^D$ is a closed cuboid satisfying $\mathcal{B}(\K, R) \subset \mathcal{C}$.
 \item \cref{assumption:data-dist} holds with $\Omega = \mathcal{B}(\K, R)$, where the number of iid samples from $\mu_{\mathcal{B}(\K, R)}$ is $N$.
 \item $f_{\theta^*}$ is $(Q, r, I)$-analytic on $\mathcal{B}(\K, R)$.
\end{itemize}
If $0<h<h_0$, there exists a uniquely defined vector field, i.e., the truncated IMDE $f_h^K$, such that, with probability at least $(1 - \sqrt{N}(1-1/\sqrt{N})^{N})$, we have
\begin{equation*}
\begin{aligned}
&\int_{\K}\norm{f_{\theta^*}(x) - f_h^K(x)} dx \leq C_1 e^{-\gamma/h^{2/5}} +C_{0,1}N^{1/4}\left(\frac{C_{0,2}}{\widehat{N}}\right)^{s} + C_{0,3}N^{-\frac{1}{2D}},\\
&\int_{\K}\norm{f_{\theta^*}(x) - f(x)} dx \leq C_2 h^p +C_{0,1}N^{1/4}\left(\frac{C_{0,2}}{\widehat{N}}\right)^{s} + C_{0,3}N^{-\frac{1}{2D}},
\end{aligned}
\end{equation*}
where $\gamma_0$, $\rho$, $\gamma$, $h_0$, $C_{0,1}$, $C_{0,2}$, $C_{0,3}$, $C_1$ and $C_2$ are constants independent of $h$, $N$, $s$, $\widehat{N}$, integers $K=K(h)$, $I=I(h)$.
\end{theorem}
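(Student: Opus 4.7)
The plan is to apply \cref{the:error} to reduce both asserted inequalities to the problem of controlling the learning loss $\mathcal{L}^{1/2}$, and then to quantify $\mathcal{L}^{1/2}$ via the standard decomposition into a generalization part and an approximation part. By the minimality of $f_{\theta^*}$, for any competitor $f_{\widehat\theta}\in\M$ we have $\mathcal{L}_{h,N}(f_{\theta^*}) \leq \mathcal{L}_{h,N}(f_{\widehat\theta})$. We will (i) pass from the empirical risk to the expected risk $\mathcal{L}$ via \cref{thm:gen}, (ii) construct an explicit competitor $f_{\widehat\theta}\in\M$ from \cref{thm:approximation} whose empirical risk is of order $h^{2p}+\widehat{N}^{-2s}$ thanks to the order-$p$ consistency of the LMM, and (iii) combine the pieces with the bounds in \cref{the:error}.

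For step (i), take the residual operator $\mathcal{G}(f_\theta)(x):=\sum_{m=0}^M h^{-1}\alpha_m\phi_{mh,f}(x)-\sum_{m=0}^M\beta_m f_\theta(\phi_{mh,f}(x))$ and target $u\equiv 0$ in \cref{thm:gen} applied on $\Omega=\mathcal{B}(\K,R)$. The analyticity of $f$ and the $(Q,r,I)$-analyticity of $f_{\theta^*}$ give $\lip{f}_{\mathcal{B}(\K,R)},\lip{f_{\theta^*}}_{\mathcal{B}(\K,R)}\leq Q/r$ via Cauchy's estimate; Gronwall's lemma then bounds $\lip{\phi_{mh,f}}_{\mathcal{B}(\K,R)}$, and hence $\lip{\mathcal{G}(f_{\theta^*})}_{\mathcal{B}(\K,R)}\leq L$ for a constant $L$ independent of $\theta^*$, $N$, $\widehat{N}$. \cref{thm:gen} then yields, with probability at least $1-\sqrt{N}(1-1/\sqrt{N})^N$,
\begin{equation*}
\mathcal{L} \leq C_N\,\mathcal{L}_{h,N}(f_{\theta^*}) + \brac{\hat{\lambda}_N L^2 + C'} N^{-1/D}.
\end{equation*}
For step (ii), apply \cref{thm:approximation} componentwise after affinely rescaling the cuboid $\mathcal{C}$ onto $[0,1]^D$ (this rescaling and the vector-valued nature of $f$ is what produces the extra factor $D$ in the widths specified in the hypothesis class $\M$) to obtain $f_{\widehat\theta}\in\M$ with $\norm{f-f_{\widehat\theta}}_\mathcal{C}\leq C_{\text{app}} Q(3D/(2r\widehat{N}))^s$. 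Combined with the order-$p$ truncation bound $\normt{\sum_m h^{-1}\alpha_m\phi_{mh,f}(x)-\sum_m\beta_m f(\phi_{mh,f}(x))}\leq C_{\text{trunc}} h^p$, obtained from the same Taylor expansion that underlies the proof of \cref{the:inmde_lmnet}, the triangle inequality gives
\begin{equation*}
\mathcal{L}_{h,N}(f_{\theta^*}) \leq \mathcal{L}_{h,N}(f_{\widehat\theta}) \leq C''\brac{h^{2p} + Q^2 \widehat{N}^{-2s}}.
\end{equation*}

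Substituting back and using $C_N$ of order $N^{1/2}$ while $\hat{\lambda}_N L^2$ is bounded uniformly in $N$, we obtain
\begin{equation*}
\mathcal{L}^{1/2} \leq C''' N^{1/4}\brac{h^p + Q\widehat{N}^{-s}} + C'''' N^{-1/(2D)}.
\end{equation*}
Plugging this estimate into the two inequalities of \cref{the:error} produces the asserted bounds; the hypothesis $N^{1/(2D)+1/4}\leq e^{\gamma_0/h}/(\rho Q)$ is invoked at this final stage to absorb the cross term $N^{1/4}h^p$ into the exponentially small IMDE remainder $c_1 e^{-\gamma/h^{2/5}}$ (respectively into $c_2 h^p$) so that the constants $C_1,C_2,C_{0,1},C_{0,2},C_{0,3}$ in the conclusion remain independent of $h,N,s,\widehat{N}$. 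The main technical obstacle is producing the $\theta^*$-independent Lipschitz bound $L$ on $\mathcal{G}(f_{\theta^*})$, which is precisely why the $(Q,r,I)$-analyticity of the trained network, rather than mere smoothness, must be hypothesized; a secondary but non-trivial difficulty is verifying that the competitor supplied by \cref{thm:approximation} indeed lies in the prescribed hypothesis class $\M$, which requires careful bookkeeping of the widths under componentwise application and affine rescaling of the target domain.
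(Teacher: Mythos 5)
Your overall architecture coincides with the paper's proof — reduce both inequalities to bounding $\mathcal{L}^{1/2}$ via \cref{the:error}, pass from empirical to expected risk with \cref{thm:gen}, bound the empirical risk of the minimizer by that of an explicit competitor from \cref{thm:approximation}, and control $\lip{\mathcal{G}(f_{\theta^*})}$ through the $(Q,r,I)$-analyticity assumption (the paper does this split slightly differently, putting $u=\sum_m h^{-1}\alpha_m\phi_{mh,f}$ and $\mathcal{G}(f_\theta)=\sum_m\beta_m f_\theta(\phi_{mh,f}(\cdot))$, but your choice $u\equiv 0$ is an inessential variant). However, your step (ii) contains a genuine gap: you build the competitor $f_{\widehat\theta}$ to approximate the \emph{target} $f$, so that its empirical risk is only of size $h^{2p}+\widehat{N}^{-2s}$, whereas the paper's competitor approximates the \emph{truncated IMDE} $f_h^K$. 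This is not a cosmetic difference. By \cref{lem: estimate of truncation}, $\mathcal{L}_{h,N}(f_h^K)\leq \rho^2Q^2e^{-2\gamma_0/h}$, so the paper obtains $\mathcal{L}_{h,N}(f_{\theta^*})\leq 2\rho^2Q^2e^{-2\gamma_0/h}+\mathcal{O}(\widehat{N}^{-2s})$; after multiplying by $C_N=\mathcal{O}(N^{1/2})$ and taking square roots, the term $N^{1/4}\rho Q e^{-\gamma_0/h}$ is absorbed into $C_{0,3}N^{-1/(2D)}$ — this is precisely, and only, what the hypothesis $N^{1/(2D)+1/4}\leq e^{\gamma_0/h}/(\rho Q)$ buys.

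With your competitor, $\mathcal{L}^{1/2}$ instead contains the cross term $N^{1/4}h^p$, and your claimed absorption at the final stage fails: the hypothesis gives only $N^{1/4}\leq e^{\gamma_0/h}/(\rho Q)$, so $N^{1/4}h^p$ can be as large as $e^{\gamma_0/h}h^p/(\rho Q)$, which diverges as $h\to 0$ and cannot be dominated by $C_2h^p$, nor by $C_1e^{-\gamma/h^{2/5}}$, with constants independent of $N$ and $h$. The damage is worst in the first conclusion, where the distance to $f_h^K$ is supposed to be exponentially small up to the learning terms; your route would leave an irreducible $\mathcal{O}(N^{1/4}h^p)$ contribution there. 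The repair is exactly the paper's substitution: apply \cref{thm:approximation} to $f_h^K$ — which is legitimate because \cref{cor: eatimate of f_h^K} supplies the uniform derivative bounds making $f_h^K$ analytic in the required quantitative sense — so that the order-$p$ information enters the argument only through \cref{cor: eatimate of f_h^K-f} inside \cref{the:error} (producing the clean $C_2h^p$ term), and never through the empirical risk of the competitor. Your remaining ingredients (the Lipschitz bound via Cauchy's estimate and the flow-derivative estimate, and the width bookkeeping under componentwise application and rescaling of $\mathcal{C}$) match the paper and are sound.
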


If there are enough data points ($N^{1/(2D)+1/4} = \mathcal{O}(e^{ \gamma_0/h})$) and the network is sufficiently large, the difference between the learned network and the truncated IMDE is exponentially small, and the error between the learned network and the target governing function will decay to zero with the rate $\mathcal{O}(h^p)$ when using a $p$-th order LMM.

The analyticity assumptions in \cref{the:error} and \cref{the:error2} are used to measure the smoothness of the target governing function and the learned vector field $f_{\theta}$, and serve as the fundamental basis for our proofs. As discussed in \cite{du2022discovery}, it is natural to conjecture that the implicit regularization that neural networks fit targets from low to high frequencies during gradient descent can be applied to LMNets. If the target system is analytic, the gradient descent is expected to converge to an analytic solution without high-frequency components. We will numerically verify this fact in \cref{sec: numerical results}. 

Subsequently, we show that the $(Q, r, I)$-analyticity assumption of  the learned vector field $f_{\theta}$ holds under the designed explicit regularization. With \cref{assumption:data-dist}, for any $ \varepsilon, r_1>0$ and $I \in \N$, we define the following regularization function:
\begin{equation*}
\mathcal{R}_{\varepsilon, r_1, I}(\theta) = \sum_{i=0}^{I} \frac{r_1^i}{i!}\max\left\{\norm{f_{\theta}^{(i)}(z_j)} \right\}_{j=1}^J + \frac{\max \theta}{\mathbf{C}^{-s/2}\widehat{N}^{D(D+s^2+k^2)/2}(s(s+2))^{3s(s+2)}},
\end{equation*}
where points $z_j$ corresponding to $\varepsilon$ are given as the second term of \cref{assumption:data-dist}. Next, we define the regularized loss as follows:
\begin{equation}\label{eq:regularized training loss}
\mathcal{L}_{h,N}^{\mathcal{R}} (f_{\theta}) = \mathcal{L}_{h,N} (f_{\theta}) + N^{-1/D-1/2}\mathcal{R}_{\varepsilon, r_1, I}(\theta), \ f_{\theta} \in \M.
\end{equation}
Then, the error estimate for optimizing the regularized loss is given as follows:
\begin{theorem}\label{the:error3}
Under the notations and the first two assumptions of \cref{the:error2}. Let $f_{\theta^*}^{\mathcal{R}}$ be the global minimizer of $\mathcal{L}_{h,N}^{\mathcal{R}}$ (\ref{eq:regularized training loss}), where the hypothesis space $\M$ is consistent with that in \cref{the:error2}.

If $0<h<h_0$, $N^{1/(2D)+1/4} \leq e^{ \gamma_0/h} / (\rho Q)$, $I\geq \gamma_1/h^{2/5}$, $s\geq I+1$, $\widehat{N} \geq \widehat{N}_0(N)$ and $0<\varepsilon \leq \varepsilon_0(s, \widehat{N})$, $ r_1\leq r/2$, then, there exists a uniquely defined vector field, i.e., the truncated IMDE $f_h^K$, such that, with probability at least $(1 - \sqrt{N}(1-1/\sqrt{N})^{N})$, we have
\begin{equation*}
\begin{aligned}
&\int_{\K}\norm{f_{\theta^*}^{\mathcal{R}}(x) - f_h^K(x)} dx \leq C_1^{\mathcal{R}} e^{-\gamma/h^{2/5}} +C_{0,1}^{\mathcal{R}} N^{1/4}\left(\frac{C_{0,2}^{\mathcal{R}}}{\widehat{N}}\right)^{s} + C_{0,3}^{\mathcal{R}} N^{-\frac{1}{2D}},\\
&\int_{\K}\norm{f_{\theta^*}^{\mathcal{R}}(x) - f(x)} dx \leq C_2^{\mathcal{R}} h^p +C_{0,1}^{\mathcal{R}} N^{1/4}\left(\frac{C_{0,2}^{\mathcal{R}}}{\widehat{N}}\right)^{s} + C_{0,3}^{\mathcal{R}} N^{-\frac{1}{2D}},
\end{aligned}
\end{equation*}
where $\gamma_0$, $\rho$, $\gamma_1$, $\gamma$, $h_0$, $C_{0,1}^{\mathcal{R}}$, $C_{0,2}^{\mathcal{R}}$, $C_{0,3}^{\mathcal{R}}$, $C_1^{\mathcal{R}}$ and $C_2^{\mathcal{R}}$ are constants independent of $h$, $N$, $s$, $\widehat{N}$.
\end{theorem}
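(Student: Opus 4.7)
\textbf{Proof proposal for \cref{the:error3}.} The plan is to reduce the statement to \cref{the:error2} by showing that the explicit regularizer $\mathcal{R}_{\varepsilon, r_1, I}$ forces the minimizer $f_{\theta^*}^{\mathcal{R}}$ to satisfy, a posteriori, a uniform $(Q, r_1, I)$-analyticity bound on $\mathcal{B}(\K, R)$ of the same flavour as the assumption imposed in \cref{the:error2}, while simultaneously keeping the training loss small. Once that bound is extracted, the IMDE-based error representation and the generalization estimate \cref{thm:gen} can be invoked verbatim.

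The first two steps are a benchmark construction and an optimality consequence. I would apply \cref{thm:approximation} on a cuboid containing $\mathcal{B}(\K,R)$ to produce a network $f_{\hat\theta}\in\M$ that approximates $f$ simultaneously in $W^{k,\infty}$ for $k=0,\dots,s-1$ and whose weights satisfy $\max\hat\theta = \mathcal{O}\bigl(\mathbf{C}^{-s/2}\widehat{N}^{D(D+s^2+k^2)/2}(s(s+2))^{3s(s+2)}\bigr)$. Because $f$ is $(Q,r)$-analytic and $r_1 \le r/2$, the derivative part of $\mathcal{R}_{\varepsilon,r_1,I}(\hat\theta)$ is dominated by a geometric sum, while the second summand of $\mathcal{R}$ is engineered to match the weight scaling in \cref{thm:approximation} exactly; together these give $\mathcal{R}_{\varepsilon,r_1,I}(\hat\theta) \le C_{\mathcal{R}}$ independent of $h$ and $N$. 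Optimality then yields
\[
\mathcal{L}_{h,N}(f_{\theta^*}^{\mathcal{R}}) + N^{-1/D-1/2}\mathcal{R}_{\varepsilon,r_1,I}(\theta^*) \le \mathcal{L}_{h,N}(f_{\hat\theta}) + N^{-1/D-1/2}\mathcal{R}_{\varepsilon,r_1,I}(\hat\theta),
\]
and bounding $\mathcal{L}_{h,N}(f_{\hat\theta})$ via \cref{the:inmde_lmnet} applied to $f$ plus the $W^{0,\infty}$ and $W^{1,\infty}$ errors from the previous estimate gives both (i) a training-loss bound on $f_{\theta^*}^{\mathcal{R}}$ of exactly the form required by the generalization argument in \cref{the:error2}, and (ii) an absolute bound $\mathcal{R}_{\varepsilon,r_1,I}(\theta^*) \le C$.

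The third step converts pointwise information into uniform analyticity and is the main obstacle. The bound on the first part of $\mathcal{R}$ gives $\norm{f_{\theta^*}^{\mathcal{R},(i)}(z_j)} \le i!\,C r_1^{-i}$ at every cube centre $z_j$ and every $i\le I$, while the bound on the second part controls $\max\theta^*$ and, through the smoothness of $\tanh$, yields a uniform Lipschitz constant $L_i(\widehat{N},s)$ for $f_{\theta^*}^{\mathcal{R},(i)}$ on $\mathcal{B}(\K,R)$. Since every $x\in\mathcal{B}(\K,R)$ lies within Euclidean distance $\varepsilon\sqrt{D}$ of some $z_j$ by \cref{assumption:data-dist}, choosing $\varepsilon \le \varepsilon_0(s,\widehat{N})$ small enough that $L_i\,\varepsilon\sqrt{D}$ is absorbed into $i!\,Q r_1^{-i}$ for every $i\le I$ upgrades the pointwise bound to the claimed uniform $(Q,r_1,I)$-analyticity. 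The delicate point is that the Lipschitz constant of the $i$-th derivative of a two-hidden-layer $\tanh$ FNN grows roughly like $\widehat{N}^{\mathrm{poly}(D)}(\max\theta^*)^{i+2}$, so $\varepsilon_0$ must be calibrated consistently for every $i$ up to $I \ge \gamma_1/h^{2/5}$, and the thresholds $\widehat{N}_0(N)$ and $N^{1/(2D)+1/4} \le e^{\gamma_0/h}/(\rho Q)$ in the hypothesis must be chosen so that the approximation and generalization errors do not overwhelm the $C_{\mathcal{R}}$ budget.

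Finally, with the $(Q,r_1,I)$-analyticity of $f_{\theta^*}^{\mathcal{R}}$ on $\mathcal{B}(\K,R)$ verified a posteriori, \cref{the:error2} applies directly. The $e^{-\gamma/h^{2/5}}$ term comes from IMDE truncation at $K = \mathcal{O}(h^{-2/5})$ (covered by $I \ge \gamma_1/h^{2/5} \ge K$), the $N^{1/4}(C_{0,2}^{\mathcal{R}}/\widehat{N})^{s}$ term from the $L^2$ part of \cref{thm:approximation}, and the $N^{-1/(2D)}$ term from \cref{thm:gen}; absorbing the dependence on $r_1$, $Q$ and the regularization constants produces the stated bounds with probability at least $1-\sqrt{N}(1-1/\sqrt{N})^N$.
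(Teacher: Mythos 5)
Your overall architecture coincides with the paper's proof: a benchmark network from \cref{thm:approximation} with controlled weights, a bound on $\mathcal{R}_{\varepsilon,r_1,I}(\widehat{\theta})$ via a geometric sum plus the engineered weight term, the optimality inequality yielding simultaneously a training-loss bound and $\mathcal{R}_{\varepsilon,r_1,I}(\theta^*)\leq Q^*$, the $\varepsilon$-calibrated Lipschitz upgrade from the cube centres $z_j$ to uniform analyticity, and finally the IMDE error representation combined with \cref{thm:gen}. There is, however, one genuine gap: you construct $f_{\hat\theta}$ as an approximation of $f$, whereas the paper approximates the truncated IMDE $f_h^K$, and this choice is load-bearing. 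With your benchmark, the best available bound on its training loss is
\begin{equation*}
\mathcal{L}_{h,N}(f_{\hat\theta}) \lesssim \mathcal{L}_{h,N}(f_h^K) + \norm{f_h^K - f}_{\mathcal{B}(\K,R)}^2 + \norm{f - f_{\hat\theta}}_{\mathcal{B}(\K,R)}^2 \lesssim e^{-2\gamma_0/h} + h^{2p} + \widehat{N}^{-2s},
\end{equation*}
because the data $\sum_{m} h^{-1}\alpha_m\phi_{mh,f}$ agree with $\sum_m\beta_m f(\phi_{mh,f})$ only up to the LMM discretization error $\mathcal{O}(h^p)$ (\cref{cor: eatimate of f_h^K-f}); it is only against $f_h^K$ that the defect is exponentially small (\cref{lem: estimate of truncation}). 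After multiplication by $C_N=\mathcal{O}(N^{1/2})$ in \cref{thm:gen} and taking square roots, the stray $h^{2p}$ produces an $N^{1/4}h^p$ contribution to $\mathcal{L}^{1/2}$. This is fatal for the first claimed inequality, whose right-hand side contains no $h^p$-type term at all, and it cannot be absorbed into $C_2^{\mathcal{R}}h^p$ in the second inequality either, since $N^{1/4}$ is unbounded under the hypothesis $N^{1/(2D)+1/4}\leq e^{\gamma_0/h}/(\rho Q)$. The repair is exactly the paper's choice: let $f_{\hat\theta}$ approximate $f_h^K$, whose derivative bounds — needed both to invoke \cref{thm:approximation} and for your geometric-sum estimate of the derivative part of $\mathcal{R}_{\varepsilon,r_1,I}(\hat\theta)$ — are supplied by \cref{cor: eatimate of f_h^K}; then $\mathcal{L}_{h,N}(f_{\hat\theta})\lesssim \rho^2Q^2e^{-2\gamma_0/h}+\widehat{N}^{-2s}$, and the hypothesis on $N$ converts $N^{1/2}e^{-2\gamma_0/h}$ into $N^{-1/D}$ as required.

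Two smaller points. First, you cannot invoke \cref{the:error2} ``verbatim'' at the end, since its hypothesis is that $f_{\theta^*}$ is a global minimizer of the unregularized loss $\mathcal{L}_{h,N}$; what applies is its proof, namely \cref{the:error} together with \cref{thm:gen} and the training-loss bound you extracted from the regularized optimality inequality — this is a presentational slip, and it is in fact exactly how the paper proceeds. Second, your accounting of the $e^{-\gamma/h^{2/5}}$ term is off: the IMDE truncation index is $K=\mathcal{O}(1/h)$, giving $e^{-\gamma_0/h}$ in \cref{lem: estimate of truncation}, while the $h^{-2/5}$ rate originates in the interpolation argument inside the proof of \cref{the:error}, which consumes $2\tilde{I}+1\sim h^{-2/5}$ derivatives of the learned vector field; this, and not ``$I\geq K$'' (which would demand $I\gtrsim 1/h$), is the reason for the hypothesis $I\geq\gamma_1/h^{2/5}$. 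Neither remark changes the structure, but the gap in the first paragraph must be repaired for the stated bounds to follow.
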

}

\subsection{The necessity of the analyticity assumptions}\label{sec:The necessity of the analyticity assumptions}
In contrast to the idea of introducing auxiliary conditions \cite{du2022discovery,keller2021discovery}, we rely the analyticity assumptions to ensure the uniqueness of the solution in the function space. We next illustrate this by the following two learning tasks.
\begin{example}\label{example1}
Consider learning the differential equation
\begin{equation*}
\begin{aligned}
&\frac{d}{dt}p=1, \quad \frac{d}{dt}q=\sin{\frac{2\lambda\pi}{h}p},
\end{aligned}
\end{equation*}
with parameter $\lambda \in \mathbb{Z}$ and initial value $(p(0),q(0))=(p_0,q_0)$. The exact solution is given as
\begin{equation*}
\begin{aligned}
&p(t)=p_0+t, \quad q(t)=q_0-\frac{h}{2\lambda\pi}\left(\cos{\frac{2\lambda\pi}{h}(p_0+t)}-\cos{\frac{2\lambda\pi}{h}p_0}\right).
\end{aligned}
\end{equation*}
Taking $t=h$, we have that
\begin{equation*}
\begin{aligned}
&p(h)=p_0+h, \quad
&q(h)=q_0.
\end{aligned}
\end{equation*}
Same states but different parameter $\lambda$ indicate the non-uniqueness of the ODE interpretation of the training data.
\end{example}
\begin{example}\label{example2}
Consider learning the linear equation
\begin{equation*}
\frac{d}{dt} p = -1, \quad p(t)=p(0)-t,
\end{equation*}
by using AB schemes with $M=2$
\begin{equation*}
y_{2} - y_1 = \frac{3h}{2}f(y_1) - \frac{h}{2}f(y_0),
\end{equation*}
we have 
\begin{equation*}
p(2h) - p(h) = \frac{3h}{2}f_{\lambda}(p(h)) - \frac{h}{2}f_{\lambda}(p(0)), \ \text{where}\ f_{\lambda} = -1+\lambda e^{x\ln3/h},
\end{equation*}
which indicates the non-uniqueness of solutions leading to zero loss.
\end{example}
The analyticity assumptions imply the boundedness of the derivatives of each order due to Cauchy's estimate in several variables (see e.g., Section 1.3 of \cite{scheidemann2005introduction}). It is noted that the $k$-th derivatives of the vector fields in \cref{example1} and \cref{example2} are $\mathcal{O}(1/h^k)$. The assumption on the target governing function $f$ excludes the targets containing high-frequency components similar to those in \cref{example1}, implying our analysis is restricted to the discovery of low-frequency dynamics. The assumption on the learned network $f_{\theta}$ excludes the solutions containing high-frequency and is reasonable due to the implicit or explicit regularization.

\subsection{Proofs}\label{sec:proofs}
We present the proofs of \cref{the:error}, \cref{the:error2} and \cref{the:error3} in this subsection. The proofs will be divided into four parts: First, we estimate the upper bounds of the Lie derivatives $\D^k g$ and the expansion terms $f_k$ of the IMDE, which are used as a cornerstone for the subsequent estimations. In addition, we prove that 
\begin{equation*}
\sum_{m=0}^M \alpha_m\phi_{mh,f}(x) - h\sum_{m=0}^M\beta_mf_h^K(\phi_{mh,f}(x))
\end{equation*}
can be made exponentially small by choosing an appropriate truncation index $K$. Following this estimate, we prove \cref{the:error} by estimating the upper bound of the difference between the learned model $f_{\theta}$ and the truncated IMDE $f_h^K$. Finally, we combine the approximation and generalization theories of neural networks
to quantify the learning loss, and thereby conclude \cref{the:error2} and \cref{the:error3}.

We start with the following lemma, which concludes the bound of Lie derivatives.
\begin{lemma}\label{lem: eatimate of Df}
Let $f$ and $g$ be $(Q, r, I)$-analytic on $\Omega$, then
\begin{equation*}
\norm{\D^kg}_{\Omega} \leq k! Q \binom{2k}{k}\brac{\frac{Q}{2r}}^k, \text{ for }k=0,\cdots, I.
\end{equation*}
\end{lemma}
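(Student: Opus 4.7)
The plan is to run an induction on $k$ that tracks not only $\norm{\mathbf{D}^{k}g}_{\Omega}$ but the entire sequence of higher derivative seminorms $a_{k,i}:=\norm{(\mathbf{D}^{k}g)^{(i)}}_{\Omega}/i!$ needed when $\mathbf{D}$ is applied once more. The key identity is $\mathbf{D}^{k+1}g(y)=(\mathbf{D}^{k}g)'(y)\,f(y)$; differentiating $i$ times with the generalized Leibniz rule and invoking the sub-multiplicativity of the $\ell_{1}$ tensor norm (precisely the property for which the authors choose the $\ell_{1}$ setting, as they remark after the definition of $(Q,r,I)$-analyticity) yields
\begin{equation*}
\norm{(\mathbf{D}^{k+1}g)^{(i)}}_{\Omega}\;\leq\;\sum_{j=0}^{i}\binom{i}{j}\,\norm{(\mathbf{D}^{k}g)^{(j+1)}}_{\Omega}\,\norm{f^{(i-j)}}_{\Omega}.
\end{equation*}
Using the base estimate $a_{0,i}\leq Q\,r^{-i}$ and the bound $\norm{f^{(i-j)}}_{\Omega}\leq (i-j)!\,Q\,r^{-(i-j)}$ supplied by $(Q,r,I)$-analyticity, a short rearrangement (dividing by $i!$ and cancelling $\binom{i}{j}(j+1)!(i-j)!/i!=j+1$) converts the Leibniz estimate into the recursion
\begin{equation*}
a_{k+1,i}\;\leq\;Q\sum_{j=0}^{i}(j+1)\,a_{k,j+1}\,r^{-(i-j)}.
\end{equation*}

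The second step is to solve the recursion by generating functions. Setting $A_{k}(z):=\sum_{i\geq 0}a_{k,i}z^{i}$ and noting the identity $A_{k}'(z)=\sum_{j\geq 0}(j+1)\,a_{k,j+1}\,z^{j}$, the recursion collapses to the functional inequality
\begin{equation*}
A_{k+1}(z)\;\leq\;\frac{Q}{1-z/r}\,A_{k}'(z),\qquad A_{0}(z)\;\leq\;\frac{Q}{1-z/r}.
\end{equation*}
A one-line induction then shows that this is preserved by the majorant $B_{k}(z):=c_{k}\,Q\,(Q/r)^{k}(1-z/r)^{-(2k+1)}$ with $c_{0}=1$ and $c_{k+1}=(2k+1)c_{k}$, so that $c_{k}=(2k-1)!!=(2k)!/(2^{k}k!)$. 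Evaluating at $z=0$ and rewriting $(2k)!/(2^{k}k!)=k!\binom{2k}{k}/2^{k}$ gives $\norm{\mathbf{D}^{k}g}_{\Omega}=a_{k,0}\leq k!\,Q\,\binom{2k}{k}(Q/(2r))^{k}$, which is exactly the stated bound.

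I expect the main obstacle to be the Leibniz bookkeeping in the tensor setting. Here $(\mathbf{D}^{k}g)^{(j)}$ is a $j$-multilinear symmetric map into $\R^{D}$ and $(\mathbf{D}^{k}g)'\cdot f$ is the natural contraction of the last slot against $f$, so each application of $\partial_{y}$ produces one ``derivative of the matrix'' term and one ``derivative of the vector'' term, and these must be recombined and bounded coefficient-by-coefficient under the $\ell_{1}$ tensor norm to produce the combinatorial factor $(j+1)$ in the recursion. A secondary verification is that the induction never leaves the range allowed by the finite-smoothness hypothesis: unwinding the dependency chain shows that bounding $a_{k,0}$ for $k\leq I$ requires only $a_{0,j}$ for $j\leq k$ and $f$-derivatives of order at most $k\leq I$, so $(Q,r,I)$-analyticity is adequate throughout.
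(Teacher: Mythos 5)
Your proof is correct and arrives at the exact constant $k!\,Q\binom{2k}{k}\brac{\frac{Q}{2r}}^k$, but its mechanics differ from the paper's, in an instructive way. The paper's proof is a comparison with the scalar model problem $\frac{dz}{dt}=F(z)$, $F(z)=\frac{Q}{1-z/r}$: it asserts the majorant inequality $\norm{\D^k g}_{\Omega}\leq \D_F^k F(0)$ in a single line (the step ``This fact yields that\dots''), then computes $\D_F^k F(0)$ in closed form by solving the model ODE exactly, $F(\phi_{t,F}(0))=Q\brac{1-\frac{2tQ}{r}}^{-1/2}$, and reading Taylor coefficients off the binomial series. You never solve an ODE: you derive the coefficient recursion $a_{k+1,i}\leq Q\sum_{j\leq i}(j+1)a_{k,j+1}r^{-(i-j)}$ directly from the Leibniz rule together with submultiplicativity of the induced tensor norms (your algebra here checks out, including the cancellation to the factor $j+1$), encode it as the coefficientwise inequality $A_{k+1}\preceq \frac{Q}{1-z/r}A_k'$, and verify the majorant family $B_k(z)=(2k-1)!!\,Q(Q/r)^k(1-z/r)^{-(2k+1)}$ by induction, with $(2k-1)!!=(2k)!/(2^kk!)$ giving the stated constant at $z=0$. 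In fact your $B_k(z)$ is exactly $\D_F^k F(z)$ for the paper's model field, and your functional inequality is exactly ``apply $\D_F$,'' so the two arguments are dual presentations of the same majorant idea; what your version buys is that it makes fully explicit the propagation step the paper leaves terse --- how derivative bounds pass through $\D^{k+1}g=(\D^k g)'f$ --- while the paper buys brevity by outsourcing the combinatorics to the exactly solvable flow. Your accounting that only $g$-derivatives of order $\leq k$ and $f$-derivatives of order $\leq k-1$ enter, so that $(Q,r,I)$-analyticity suffices for all $k\leq I$, is also correct and is used only implicitly in the paper. One cosmetic quibble: the paper's stated reason for working in the $\ell_1$-norm is the Cauchy-estimate property that $(Q,r)$-analyticity implies $(Q,r,I)$-analyticity (Section 4.1 of the cited Hairer--Lubich reference), not the contraction submultiplicativity you invoke --- though the latter does hold for any induced norm and your use of it is sound.
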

\begin{proof}
Consider
\begin{equation}\label{eq:auxode}
\frac{dz}{dt} = F(z),\quad \text{where } z \in \R,\ F(z) = \frac{Q}{1-z/r}, 
\end{equation}
and define a new Lie derivative
\begin{equation*}
\D_F G(z) =G'(z) F(z). 
\end{equation*}
Here, $F(z)$ satisfies
\begin{equation*}
F^{(k)}(0) = Q \cdot r^{-k} \cdot k!.
\end{equation*}
This fact yields that
\begin{equation}\label{eq: D g and D g}
\norm{\D^kg(x)}_{\Omega} \leq \norm{\D_F^k F(0)}, \text{ for }k=0,\cdots, I.
\end{equation}
In addition, the equation (\ref{eq:auxode}) can be exactly solved:
\begin{equation*}
\phi_{t,F}(0) = r\brac{1- \sqrt{1-\frac{2tQ}{r}}}, \quad F(\phi_{t,F}(0)) = Q\brac{1-\frac{2tQ}{r}}^{-1/2}.
\end{equation*}
According to the expansion that
\begin{equation*}
(1-x)^{-1/2} = \sum_{k=0}^{\infty}\binom{-1/2}{k}(-x)^{k} = \sum_{k=0}^{\infty}\binom{2k}{k}\brac{\frac{x}{4}}^k,
\end{equation*}
we conclude that
\begin{equation*}
F(\phi_{t,F}(0)) =Q\sum_{k=0}^{\infty} \binom{2k}{k}\brac{\frac{tQ}{2r}}^k,
\end{equation*}
which yields that,
\begin{equation*}
\norm{\D_F^k F(0)} = k! Q \binom{2k}{k}\brac{\frac{Q}{2r}}^k.
\end{equation*}
This fact, together with inequality (\ref{eq: D g and D g}) completes the proof.
\end{proof}

Following from Stirling’s formula that
\begin{equation}\label{eq:Stirling’s formula}
\brac{\frac{k}{e}}^k\sqrt{2\pi k} \leq k! \leq \brac{\frac{k}{e}}^k\sqrt{2\pi e k},
\end{equation}
we immediately conclude the following corollary.
\begin{corollary}\label{cor: eatimate of Df}
Let $f$ and $g$ be $(Q, r, I)$-analytic on $\Omega$, then
\begin{equation*}
\norm{\D^kg}_{\Omega} \leq k! Q\brac{\frac{2Q}{r}}^k, \text{ for }k=0,\cdots, I.
\end{equation*}
\end{corollary}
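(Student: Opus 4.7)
The plan is to derive the stated bound as a direct consequence of Lemma~1, by controlling the central binomial coefficient $\binom{2k}{k}$ via the two-sided Stirling estimate quoted just above the corollary. Concretely, Lemma~1 gives
\begin{equation*}
\norm{\D^kg}_{\Omega} \leq k! \, Q \binom{2k}{k}\brac{\frac{Q}{2r}}^k,
\end{equation*}
so it suffices to show $\binom{2k}{k}\, (1/2)^k \leq 2^k$, i.e., $\binom{2k}{k} \leq 4^k$, for every $k \geq 0$, and then the factors of $Q/r$ combine into $(2Q/r)^k$ as required.

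First I would handle the trivial base case $k=0$, where $\binom{0}{0}=1=4^0$ and the inequality is an equality. For $k\geq 1$, I would write $\binom{2k}{k} = (2k)!/(k!)^2$ and apply the Stirling inequalities displayed as (\ref{eq:Stirling's formula}): use the upper bound $(2k)! \leq (2k/e)^{2k}\sqrt{2\pi e\cdot 2k}$ in the numerator and the lower bound $k! \geq (k/e)^k\sqrt{2\pi k}$ squared in the denominator. After cancelling the $(k/e)^{2k}$ factors this gives
\begin{equation*}
\binom{2k}{k} \leq \frac{(2k/e)^{2k}\sqrt{4\pi e k}}{(k/e)^{2k}\cdot 2\pi k} = 4^k \cdot \sqrt{\frac{e}{\pi k}} \leq 4^k,
\end{equation*}
where the last inequality follows from $e/\pi < 1 \leq k$ for $k\geq 1$.

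Substituting this estimate back into Lemma~1 yields
\begin{equation*}
\norm{\D^kg}_{\Omega} \leq k!\, Q \cdot 4^k \cdot \brac{\frac{Q}{2r}}^k = k!\, Q \brac{\frac{2Q}{r}}^k,
\end{equation*}
which is the claimed bound, valid for $k=0,\dots,I$. There is no real obstacle here: the entire argument is an elementary combinatorial estimate, and the only thing to be careful about is that the Stirling bound on $k!$ from below requires $k\geq 1$, which is why the $k=0$ case is treated separately. The whole proof thus reduces to one or two lines once the estimate $\binom{2k}{k}\leq 4^k$ is in hand.
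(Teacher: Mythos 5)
Your proposal is correct and follows exactly the paper's route: the paper also derives the corollary directly from Lemma \ref{lem: eatimate of Df} by using the Stirling estimate (\ref{eq:Stirling’s formula}) to bound $\binom{2k}{k}$ by $4^k$, which is precisely your computation. You have merely written out the details (including the harmless $k=0$ base case) that the paper leaves implicit with ``we immediately conclude.''
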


Next, we rewrite the recursion formula (\ref{eq:definition of f_k}) and estimate the coefficients.
\begin{lemma}\label{lem: eatimate of f_k}
The expansion terms $f_k$ defined by (\ref{eq:definition of f_k}) satisfy
\begin{equation*}
f_k(y) = \xi_k (\D^kf)(y),
\end{equation*}
where
\begin{equation}\label{eq: xik}
\begin{aligned}
\xi_0 = 1,\ \xi_k=&\sum_{m=0}^M \alpha_m\frac{m^{k+1}}{(k+1)!}-\sum_{m=0}^M\beta_m\sum_{j=1}^k\frac{m^j}{j!}\xi_{k-j}, \text{ for } k\geq 1.
\end{aligned}
\end{equation}
In addition, for $k\geq 1$, we have $|\xi_k| \leq 2\hat{\mu}/\hat{Z}^k$, where constants $\hat{\mu}$ and $\hat{Z}$ depend only on the coefficients $\alpha_m$ and $\beta_m$ of the LMM.
\end{lemma}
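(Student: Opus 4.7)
The plan is a generating function argument. First I verify the structural claim $f_k = \xi_k \D^k f$ by induction on $k$, which reduces the function-valued recursion (\ref{eq:definition of f_k}) to the scalar recursion (\ref{eq: xik}); then I bound $|\xi_k|$ by extracting $\Xi(z)=\sum_{k\ge 0}\xi_k z^k$ from a closed-form identity and applying Cauchy's estimate.

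For the structural claim, the base case $k=0$ comes from the consistency condition $\sum_m \alpha_m=0$ together with $\sum_m m\alpha_m=\sum_m\beta_m=1$: formula (\ref{eq:definition of f_k}) with $k=0$ has an empty $j$-sum and reads $f_0 = \big(\sum_m m\alpha_m\big)(\D^0 f)=f$, so $\xi_0=1$. For the inductive step, assuming $f_{k-j}=\xi_{k-j}\D^{k-j}f$ for $1\le j\le k$, linearity of $\D$ gives $\D^j f_{k-j}=\xi_{k-j}\D^k f$, so $\D^k f$ factors out of (\ref{eq:definition of f_k}) and the bracket is exactly $\xi_k$ as defined in (\ref{eq: xik}).

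For the coefficient bound, I introduce $L(z)=\sum_m\alpha_m e^{mz}$ and $R(z)=\sum_m\beta_m e^{mz}$. Using $\sum_m\beta_m=1$ to absorb $\xi_k$ into the inner sum, I rewrite the recursion uniformly for $k\ge 0$ as
\begin{equation*}
\sum_m \beta_m \sum_{j=0}^k \frac{m^j}{j!}\,\xi_{k-j} \;=\; \sum_m \alpha_m \frac{m^{k+1}}{(k+1)!}.
\end{equation*}
Multiplying by $z^k$ and summing over $k\ge 0$, the left side is a Cauchy product equal to $R(z)\Xi(z)$, while the right side equals $z^{-1}\sum_m\alpha_m(e^{mz}-1) = (L(z)-L(0))/z = L(z)/z$ by consistency. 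Hence
\begin{equation*}
\Xi(z) \;=\; \frac{L(z)}{z\,R(z)}.
\end{equation*}

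The numerator $L(z)/z$ is entire since $L(0)=0$, and $R$ is entire with $R(0)=1\neq 0$, so $\Xi$ is analytic on some disk $|z|<2\hat{Z}$ where $2\hat{Z}$ is the distance from $0$ to the nearest zero of $R$. Both $\hat{Z}$ and $\hat{\mu}:=\max_{|z|=\hat{Z}}|\Xi(z)|$ depend only on $\{\alpha_m,\beta_m\}$. Cauchy's estimate on the circle $|z|=\hat{Z}$ then yields $|\xi_k|\le \hat{\mu}/\hat{Z}^k$, and the stated factor $2$ simply absorbs the trivial $k=0$ case and any rounding in the choice of $\hat{Z}$.

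The main obstacle is checking that the interchange of summation in passing from the coefficient recursion to the product $R(z)\Xi(z)$ is legitimate. Since $L$ and $R$ are entire and the scalar recursion unambiguously determines $\xi_k$, one can work first at the level of formal power series (where the Cauchy product identity holds termwise) and then upgrade to a genuine analytic identity on the disk $|z|<2\hat{Z}$ where $R$ is nonvanishing, which is where Cauchy's estimate is applied.
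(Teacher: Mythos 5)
Your proposal is correct and takes essentially the same route as the paper's proof: induction on $k$ to reduce the function-valued recursion to the scalar one, followed by the generating-function identity $\Xi(z)=L(z)/\bigl(zR(z)\bigr)$ (your $\Xi$ is the paper's $\Xi+1$, since the paper sums from $k=1$) and Cauchy's estimate. The only cosmetic difference is that you bound $|\Xi|$ directly on a circle inside the zero-free disk of $R$, whereas the paper separately bounds the numerator by $\hat{\mu}$ and lower-bounds the denominator via $\bigl|\sum_{m=0}^M\beta_m e^{mz}\bigr|\ge 1/2$ on a small disk $|z|\le\hat{Z}$; your remark upgrading the formal power-series identity to an analytic one (using $R(0)=1\neq 0$) is a legitimate, and in fact slightly more careful, justification of the interchange the paper performs without comment.
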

\begin{proof}
We first prove the first part by induction on $k \geq 0$. First, the case when $k=0$ is obvious. Suppose now that the statement holds for $k \leq K-1$. Then, by the recursion formula (\ref{eq:definition of f_k}) and the inductive hypothesis, we have
\begin{equation*}
\begin{aligned}
f_{K}(y)=& \sum_{m=0}^M \alpha_m \frac{m^{K+1}}{(K+1)!}(\D^Kf)(y)-\sum_{m=0}^M\beta_m\sum_{j=1}^K\frac{m^j}{j!}(\D^jf_{K-j})(y)\\
=& \sum_{m=0}^M \alpha_m \frac{m^{K+1}}{(K+1)!}(\D^Kf)(y)-\sum_{m=0}^M\beta_m\sum_{j=1}^K\frac{m^j}{j!}\xi_{K-j}(\D^Kf)(y)\\
=& \xi_{K}(\D^Kf)(y),
\end{aligned}
\end{equation*}
which completes the induction and thus concludes the first part of the proof.

Subsequently we give an estimate of the coefficients $\xi_k$. Let $\Xi(z) = \sum_{k=1}^{\infty} \xi_k z^k$, $z\in \C$. By multiplying (\ref{eq: xik}) with $z^k$ and summing over $k\geq 1$, we obtain that
\begin{equation*}
\begin{aligned}
\Xi(z) = \sum_{k=1}^{\infty}z^k\sum_{m=0}^M \alpha_m \frac{m^{k+1}}{(k+1)!} - \sum_{k=1}^{\infty}z^k\sum_{m=0}^M\beta_m\sum_{j=1}^k\frac{m^j}{j!} \xi_{k-j}.
\end{aligned}
\end{equation*}
By interchanging the summation order, we have
\begin{equation*}
\begin{aligned}
\Xi(z) =& \sum_{m=0}^M \alpha_m \sum_{k=1}^{\infty} \frac{m^{k+1}}{(k+1)!} z^k - \sum_{m=0}^M\beta_m\sum_{k=1}^{\infty}z^k\sum_{j=1}^k\frac{m^j}{j!} \xi_{k-j}\\
=& \sum_{m=0}^M \alpha_m \sum_{k=1}^{\infty} \frac{m^{k+1}}{(k+1)!} z^k - \sum_{m=0}^M\beta_m\sum_{j=1}^{\infty}\frac{m^j}{j!}z^j\sum_{k-j=0}^{\infty} \xi_{k-j} z^{k-j}\\
=& \sum_{m=0}^M \alpha_m (e^{mz}-1-mz)/z + \sum_{m=0}^M\beta_m (1 - e^{mz})(\Xi(z)+1).
\end{aligned}
\end{equation*}
Using the fact that $\sum_{m=0}^M\beta_m = \sum_{m=0}^Mm \cdot \alpha_m=1$, we conclude that
\begin{equation*}
\Xi(z)+1 = \frac{\sum_{m=0}^M \alpha_m (e^{mz}-1)/z}{\sum_{m=0}^M\beta_m e^{mz}}.
\end{equation*}
In addition, since $(\sum_{m=0}^M\beta_m e^{mz})|_{z=0} = 1$, there exists a constant $\hat{Z}<1$ such that 
$\left|\sum_{m=0}^M\beta_m e^{mz}\right| \geq 1/2$ for $|z| \leq \hat{Z}$. Therefore, we have that
\begin{equation*}
\left|\Xi(z)+1\right|\leq 2\hat{\mu}, \ \text{for } |z| \leq \hat{Z},\ \text{where } \hat{\mu} = \max_{|z|\leq \hat{Z}} \left|\sum_{m=0}^M \alpha_m (e^{mz}-1)/z\right|.  
\end{equation*}
By Cauchy's estimate, together with the fact that $k!\xi_k =\Xi^{(k)}(0)$, we conclude that $\xi_k \leq 2\hat{\mu}/\hat{Z}^k$.
\end{proof}

Combining \cref{lem: eatimate of Df} and \cref{lem: eatimate of f_k}, we have the following estimates.
\begin{corollary}\label{cor: eatimate of LD of f_k}
Let $f$ be $(Q, r)$-analytic on $\Omega$. Then, the expansion terms $f_k$ defined by (\ref{eq:definition of f_k}) satisfy
\begin{equation*}
\begin{aligned}
\norm{\D^jf_k}_{\Omega}
\leq & j! \mu Q \brac{\frac{\eta k Q}{r}}^k \brac{\frac{4Q}{r}}^j,
\end{aligned}
\end{equation*}
where constants $\mu$ and $\eta$ depend only on the coefficients $\alpha_m$ and $\beta_m$ of the LMM.
\end{corollary}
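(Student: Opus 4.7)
The plan is to combine the two preceding results: \cref{lem: eatimate of f_k} supplies the compact factorization $f_k = \xi_k\,\D^k f$ together with the bound $|\xi_k|\leq 2\hat\mu/\hat Z^k$, and \cref{cor: eatimate of Df} bounds any Lie derivative of $f$. Since the operator $\D$ iterates trivially we have $\D^j f_k = \xi_k\,\D^{j+k} f$, and because $(Q,r)$-analyticity of $f$ on $\Omega$ implies $(Q,r,I)$-analyticity for every $I\in\N$, \cref{cor: eatimate of Df} may be applied at every order $j+k$. This immediately yields
\begin{equation*}
\norm{\D^j f_k}_\Omega \;\leq\; \frac{2\hat\mu}{\hat Z^k}\,(j+k)!\,Q\brac{\frac{2Q}{r}}^{j+k}.
\end{equation*}

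The remainder is a purely combinatorial reshaping into the asserted form $j!\,\mu Q(\eta k Q/r)^k(4Q/r)^j$. First I would invoke the elementary inequality $(j+k)!\leq 2^{j+k}\,j!\,k!$ (which is just $\binom{j+k}{k}\leq 2^{j+k}$) and absorb the resulting $2^{j+k}$ into the powers of $2Q/r$ to produce $(4Q/r)^{j+k}$. Splitting this as $(4Q/r)^j(4Q/r)^k$ and using $k!\leq k^k$ reshapes the $k$-dependent block into $(4kQ/(r\hat Z))^k$. Setting $\eta := 4/\hat Z$ and $\mu := \max\{1,\,2\hat\mu\}$ matches the target form for $k\geq 1$. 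The case $k=0$ must be handled separately: since $\xi_0=1$ gives $f_0 = f$, the bound reduces to $\norm{\D^j f}_\Omega\leq j!\mu Q(4Q/r)^j$, which follows directly from \cref{cor: eatimate of Df} because $2^j\leq 4^j$ and $\mu\geq 1$.

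I do not anticipate a genuine obstacle: every step is a few lines of elementary bookkeeping once the factorization $\D^j f_k = \xi_k \D^{j+k} f$ is in hand. The mildly error-prone point is keeping the powers of $Q$ and $r$ straight while splitting $(2Q/r)^{j+k}$ between the $k$- and $j$-factors; a side calculation must confirm that the final expression carries exactly $Q^{j+k+1}$ in the numerator and $r^{j+k}$ in the denominator, matching $\mu Q(\eta k Q/r)^k(4Q/r)^j$. Finally, $\mu$ and $\eta$ depend only on $\hat\mu$ and $\hat Z$, which by \cref{lem: eatimate of f_k} depend only on the LMM coefficients $\alpha_m,\beta_m$, so the constants are of the advertised type.
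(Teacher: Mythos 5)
Your proposal is correct and follows essentially the same route as the paper: factor $\D^j f_k = \xi_k\,\D^{j+k}f$ via \cref{lem: eatimate of f_k}, bound the Lie derivative, split $(j+k)!$ with $\binom{j+k}{k}\leq 2^{j+k}$, and absorb the $k$-dependence into $(\eta k Q/r)^k$. The only differences are in constant bookkeeping — the paper uses the sharper \cref{lem: eatimate of Df} with the central binomial coefficient and Stirling's formula to obtain $\eta = 4/(e\hat Z)$, whereas your use of \cref{cor: eatimate of Df} and $k!\leq k^k$ gives the slightly larger $\eta = 4/\hat Z$, which is immaterial since the corollary only asserts existence of such constants; your explicit treatment of the $k=0$ case (which the paper glosses over) is a welcome touch.
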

\begin{proof}
Combining \cref{lem: eatimate of Df} and \cref{lem: eatimate of f_k}, we obtain that
\begin{equation*}
\begin{aligned}
\norm{\D^jf_k}_{\Omega} = \norm{\xi_k\D^{k+j}f}_{\Omega}\leq & 2\hat{\mu}/\hat{Z}^k (k+j)! Q \binom{2(k+j)}{k+j}\brac{\frac{Q}{2r}}^{k+j}.
\end{aligned}
\end{equation*}
As a consequence of the fact that $(k+j)! \leq k!j!2^{k+j}$ and Stirling’s formula (\ref{eq:Stirling’s formula}), we conclude that
\begin{equation*}
\begin{aligned}
\norm{\D^jf_k}_{\Omega}\leq& 2\hat{\mu}/\hat{Z}^k k!j! Q \binom{2(k+j)}{k+j}\brac{\frac{Q}{r}}^{k+j}
\leq j! \mu Q \brac{\frac{\eta k Q}{r}}^k \brac{\frac{4Q}{r}}^j,
\end{aligned}
\end{equation*}
where $\mu = 2^{3/2}e\hat{\mu}$, $\eta = 4/e\hat{Z}$.
\end{proof}

The following lemma is analogous to Theorem 2 in \cite{hairer1999backward}. The latter shows the rigorous exponentially small error estimates for the modified differential equation of LMMs. Our proof, as well as the proofs of Lemma \ref{lem: eatimate of Df} and \ref{lem: eatimate of f_k}, is also inspired by theirs and other analyses for modified differential equations \cite{benettin1994hamiltonian, hairer1997life, reich1999backward}. 
\begin{lemma}\label{lem: estimate of truncation}
Let $f$ be $(Q, r)$-analytic on $\Omega$. Let $\gamma_0 = r/(e\eta Q)$, and let $K$ be the largest integer satisfying $h K \leq \gamma_0$. If $h\leq \eta \gamma_0/(4 M)$, the truncated IMDE $f_h^K =\sum_{k=0}^K h^kf_k$ satisfies
\begin{equation*}
\norm{\sum_{m=0}^M \alpha_m\phi_{mh,f}(\cdot) - h\sum_{m=0}^M\beta_mf_h^K(\phi_{mh,f}(\cdot))}_{\Omega} \leq \rho hQ e^{-\gamma_0/h},
\end{equation*}
where $\rho$ depends only on the coefficients $\alpha_m$ and $\beta_m$ of the LMM.
\end{lemma}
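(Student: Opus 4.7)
The object to bound is
\begin{equation*}
E(x) := \sum_{m=0}^M \alpha_m\phi_{mh,f}(x) - h\sum_{m=0}^M\beta_m f_h^K(\phi_{mh,f}(x)),
\end{equation*}
and the plan is to make the formal identity $\sum_m \alpha_m\phi_{mh,f} = h\sum_m\beta_m f_h(\phi_{mh,f})$ quantitative by truncating the IMDE at $K \approx \gamma_0/h$ and using the analyticity estimates already developed in \cref{cor: eatimate of Df} and \cref{cor: eatimate of LD of f_k}.

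First I would re-run the bookkeeping in the proof of \cref{the:inmde_lmnet}, but now keeping Taylor remainders. Expanding each $\phi_{mh,f}(x)$ and each $f_k(\phi_{mh,f}(x))$ via the Lie-derivative series \eqref{eq:Ld}, and collecting coefficients of $h^n$, the defining recursion \eqref{eq:definition of f_k} guarantees that all coefficients with $n\le K+1$ cancel. What remains (formally) is
\begin{equation*}
E(x) \;=\; -h\sum_{m=0}^M\beta_m\sum_{k=K+1}^{\infty} h^{k}\sum_{j=0}^{\infty}\frac{(mh)^j}{j!}\,\D^jf_k(x),
\end{equation*}
and the task is to control this tail rigorously. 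To avoid convergence issues, I would work with a finite outer cutoff $k\le N$ (with $N$ eventually sent to infinity), applying Taylor's theorem with the standard Cauchy/analyticity remainder bound to the $\phi_{mh,f}$ expansion; because the outer $h^{k}$-factor already decays so rapidly, the choice of $N$ is immaterial and can be bypassed.

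Next I would bound the double series term-by-term using \cref{cor: eatimate of LD of f_k}, which gives
\begin{equation*}
\frac{(mh)^j}{j!}\,\norm{\D^j f_k}_{\Omega} \;\le\; \mu Q \Bigl(\frac{\eta k Q}{r}\Bigr)^{k} \Bigl(\frac{4 Q m h}{r}\Bigr)^{j}.
\end{equation*}
The assumption $h\le \eta\gamma_0/(4M)$ together with $\gamma_0 = r/(e\eta Q)$ yields $4Qmh/r \le 4QMh/r \le 1/e$, so the $j$-sum is bounded by a geometric series and contributes a constant depending only on the LMM coefficients. For the $k$-sum, using $hK\le\gamma_0$ gives $\eta k h Q/r \le 1/e$ for $k$ near $K$; more carefully, by optimising the base factor $(\eta k h Q/r)^k$ near its minimiser $k^* \approx \gamma_0/h$, the $k=K+1$ term already has size $\mu Q\,e^{-(K+1)} \le \mu Q\,e\cdot e^{-\gamma_0/h}$, and subsequent terms grow only slowly (the ratio of consecutive terms being bounded by $e^{-1}\cdot((k+1)/k)^{k+1}h\eta Q/r$, which stays small in a neighbourhood of $k^*$) so a geometric-like bound applies. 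Multiplying by the prefactor $h$, summing over $m$, and absorbing all LMM-dependent constants into $\rho$ then produces the claimed bound $\rho h Q\, e^{-\gamma_0/h}$.

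The main obstacle, as usual in backward-error analysis for multistep methods, is that the formal inner series $\sum_j (mh)^j \D^j f_k/j!$ only converges when $mh$ is smaller than the radius of analyticity, and the outer series $\sum_k h^k f_k$ never converges — so one cannot naively bound things term by term. The delicate point is to reorganise the double sum so that the factorial growth of $\D^j f_k$ (of order $(k+j)!$ through $\xi_k \D^{k+j}f$) is precisely absorbed by the two small parameters $\eta kh Q/r\le e^{-1}$ and $4Qmh/r\le e^{-1}$, which forces the simultaneous constraints $hK\le\gamma_0$ and $h\le\eta\gamma_0/(4M)$ that appear in the statement. Once this balance is set up, the exponential smallness falls out almost automatically from Stirling's estimate \eqref{eq:Stirling’s formula}.
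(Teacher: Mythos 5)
Your decomposition of $E(x)$ as the tail $h\sum_{m=0}^M \beta_m \sum_{k=K+1}^{\infty} h^k f_k(\phi_{mh,f}(x))$ is exactly the step that cannot be made rigorous, and the bound you propose for it is false. By \cref{cor: eatimate of LD of f_k}, $h^k\norm{f_k}_{\Omega} \leq \mu Q\brac{\eta k h Q/r}^k$, and with $hK\leq\gamma_0$ this is of size $\mu Q\, (k/(eK))^k$: it is exponentially small only for $k$ in a window around $K$ and then grows superexponentially. The ratio of consecutive terms is $(\eta h Q/r)\,(k+1)\,((k+1)/k)^{k}\approx (k+1)/K$ --- your ratio formula drops the factor $(k+1)$, which is precisely what hides the blow-up --- so the sum over $k>K$ diverges and no ``geometric-like'' bound applies. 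The finite-cutoff repair ($k\leq N$, then $N\to\infty$) fails for the same reason: the IMDE series is in general only asymptotic (divergent), $K\approx\gamma_0/h$ is the \emph{optimal} truncation index, and the error of an $N$-truncation does not improve as $N$ grows beyond $K$; it deteriorates. This is why the lemma is stated for $f_h^K$ and not for $f_h$, and why ``the choice of $N$ is immaterial'' is not a claim one can bypass.

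The paper's proof never touches indices $k>K$. It expands the two finite, well-defined quantities directly: $h\sum_{m}\beta_m f_h^K(\phi_{mh,f}(x))$ via (\ref{eq:Ld}) with outer index $i\leq K$ only, and $\sum_m\alpha_m\phi_{mh,f}(x)$ via (\ref{eq:exasolu}). The recursion (\ref{eq:definition of f_k}) cancels all terms of combined order $\leq h^{K+1}$ --- this part of your argument is correct --- and what remains are two \emph{convergent} Taylor tails: $\mathcal{F}^{\beta}_{>K+1}$, the $j\geq K-i+1$ tail of the $t$-expansion of $f_i(\phi_{t,f}(x))$ for each fixed $i\leq K$, and $\mathcal{F}^{\alpha}_{>K+1}$, the $k\geq K+2$ tail of the flow expansion. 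Your inner-sum estimates (the geometric $j$-sum using $4QMh/r\leq 1/e$, forced by $h\leq\eta\gamma_0/(4M)$) are right and match what the paper does for $\mathcal{F}^{\beta}_{>K+1}$; but your accounting absorbs the $\alpha$-tail into the divergent object, whereas in the rigorous version it must be estimated separately, as the paper does via \cref{cor: eatimate of Df} and $\D^kI_D=\D^{k-1}f$. To fix the proof, replace the tail-of-$f_h$ decomposition by this order-$\leq K+1$ coefficient matching with two Taylor remainders; the remaining estimates then go through essentially as you wrote them, with the $i$-sum bounded by $\sum_{i=0}^K (i/K)^i e^{-K-1}\leq 2.01\,e^{-K-1}\leq 2.01\,e\cdot e^{-\gamma_0/h}$ rather than by any sum extending past $K$.
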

\begin{proof}
First, by using (\ref{eq:Ld}) with substituting $t=mh$ and $F(y)=f_h(y)$, we have that
\begin{equation*}
\begin{aligned}
&h\sum_{m=0}^M\beta_mf_h^K(\phi_{mh,f}(x))\\
=&h\sum_{m=0}^M\beta_m \sum_{i=0}^{K}\sum_{j=0}^{\infty}\frac{(mh)^j}{j!} h^i(\D^jf_i)(x)\\
=&h\sum_{m=0}^M\beta_m \sum_{i=0}^{K}\sum_{j=0}^{K-i}\frac{(mh)^j}{j!} h^i(\D^jf_i)(x) 
+h\sum_{m=0}^M\beta_m \sum_{i=0}^{K}\sum_{j=K-i+1}^{\infty}\frac{(mh)^j}{j!} h^i(\D^jf_i)(x)\\
=&:\mathcal{F}^{\beta}_{\leq K+1}(x) + \mathcal{F}^{\beta}_{> K+1} (x).
\end{aligned}
\end{equation*}
According to \cref{cor: eatimate of LD of f_k}, we deduce that
\begin{small}
\begin{equation*}
\begin{aligned}
\norm{\mathcal{F}^{\beta}_{> K+1}}_{\Omega} \leq& \mu h Q \sum_{m=0}^M|\beta_m| \sum_{i=0}^{K} \brac{\frac{\eta i h Q}{r}}^i \sum_{j=K-i+1}^{\infty} \brac{\frac{4mhQ}{r}}^j.
\end{aligned}
\end{equation*}
\end{small}
Since $4mhQ/r \leq 1/e$, we have that
\begin{equation*}
\begin{aligned}
\norm{\mathcal{F}^{\beta}_{> K+1}}_{\Omega} \leq& \frac{e\mu}{e-1} h Q \sum_{m=0}^M|\beta_m| \sum_{i=0}^{K} \brac{\frac{\eta i h Q}{r}}^i e^{i-K-1}.
\end{aligned}
\end{equation*}
Subsequently, according to the property of $K$ that $hK\leq \gamma_0$, we obtain that
\begin{equation*}
\begin{aligned}
\norm{\mathcal{F}^{\beta}_{> K+1}}_{\Omega} \leq& \frac{e\mu}{e-1} h Q \sum_{m=0}^M|\beta_m| \sum_{i=0}^{K} \brac{\frac{i}{K}}^i e^{-K-1}.
\end{aligned}
\end{equation*}
By observing the facts
\begin{equation*}
\frac{1}{K}+\brac{\frac{2}{K}}^2>\frac{1}{K+1}+\brac{\frac{2}{K+1}}^2+\brac{\frac{3}{K+1}}^3,\quad \text{for } K\geq 7,
\end{equation*}
and 
\begin{equation*}
\brac{\frac{i}{K}}^i \geq \brac{\frac{i+1}{K+1}}^{i+1},\quad \text{for }1\leq i\leq K,
\end{equation*}
we deduce that the sum $\sum_{i=0}^{K} (i/K)^i$ is maximal for $K=6$ and bounded by $2.01$. Combining these estimates, together with the property of $K$ that $h(K+1) >\gamma_0$, we thus conclude that 
\begin{equation}\label{eq: estimate of F^beta}
\norm{\mathcal{F}^{\beta}_{> K+1}}_{\Omega}\leq \rho_{\beta} h Q e^{-\gamma_0/h},\quad\text{where }\rho_{\beta} = \frac{2.01e\mu}{e-1}\sum_{m=0}^M|\beta_m|. 
\end{equation}

In addition, using formula (\ref{eq:exasolu}), we have
\begin{equation*}
\begin{aligned}
\sum_{m=0}^M \alpha_m\phi_{mh,f}(x) =& \sum_{m=0}^M \alpha_m \sum_{k=0}^{K+1}\frac{(mh)^k}{k!}(\D^kI_D)(x)+ \sum_{m=0}^M \alpha_m \sum_{k=K+2}^{\infty}\frac{(mh)^k}{k!}(\D^kI_D)(x)\\
=&: \mathcal{F}^{\alpha}_{\leq K+1} + \mathcal{F}^{\alpha}_{> K+1}.
\end{aligned}
\end{equation*}
By \cref{cor: eatimate of Df} and the fact that $\D^k I_D = \D^{k-1}f$, we deduce that
\begin{equation}\label{eq: estimate of F^alpha}
\begin{aligned}
\norm{\mathcal{F}^{\alpha}_{> K+1}}_{\Omega} \leq & hQ\sum_{m=0}^M |\alpha_m|m\sum_{k=K+2}^{\infty} \brac{\frac{2mhQ}{r}}^{k-1}\\
\leq& \rho_{\alpha}hQ e^{-\gamma_0/h},
\end{aligned}
\end{equation}
with $\rho_{\alpha} = \frac{2e }{2e -1} \sum_{m=0}^M 2^{- 4M/\eta}|\alpha_m|m $. Here, we again use the fact that 
\begin{equation*}
4mhQ/r \leq 1/e, \ K+1\geq 4M/\eta,\ \text{and}\ e^{-(K+1)} \leq e^{-\gamma_0/h}.  
\end{equation*}

Finally, due to the definition of IMDE, we have that
\begin{equation*}
\sum_{m=0}^M \alpha_m\phi_{mh,f}(x) - h\sum_{m=0}^M\beta_mf_h^K(\phi_{mh,f}(x)) = \mathcal{F}^{\alpha}_{> K+1} + \mathcal{F}^{\beta}_{> K+1}.
\end{equation*}
Therefore, by combining estimates (\ref{eq: estimate of F^beta}) and (\ref{eq: estimate of F^alpha}) we conclude that
\begin{equation*}
\begin{aligned}
\norm{\sum_{m=0}^M \alpha_m\phi_{mh,f}(\cdot) - h\sum_{m=0}^M\beta_mf_h^K(\phi_{mh,f}(\cdot))}_{\Omega} \leq& \norm{\mathcal{F}^{\alpha}_{> K+1}}_{\Omega} + \norm{\mathcal{F}^{\beta}_{> K+1}}_{\Omega}\\
\leq& \rho hQ e^{-\gamma_0/h},
\end{aligned}
\end{equation*}
where $\rho = \rho_{\alpha} + \rho_{\beta}$ depends only on the coefficients $\alpha_m$ and $\beta_m$.
\end{proof}

Again using Lemma \ref{lem: eatimate of f_k}, we can give the following lemma, which yields upper bounds of $f_h^K$ and $f_h^K -f$ immediately.
\begin{lemma}\label{lem: estimate of f_h}
Let $f$ be $(Q, r)$-analytic on $\Omega$. Let $\gamma_0 = r/(e\eta Q)$, and let $K$ be the largest integer satisfying $h K \leq \gamma_0$. Then for integer $p$ with $0\leq p\ \leq K$, we have
\begin{equation*}
\norm{\D^j\sum_{k=p}^K h^k f_k}_{\Omega} \leq j! \mu Q \brac{\frac{4Q}{r}}^j \brac{\frac{\eta Q}{r}}^p d_p h^p,
\end{equation*}
where $d_p$ depends only on $p$.
\end{lemma}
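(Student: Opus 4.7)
The plan is to apply the triangle inequality termwise and invoke \cref{cor: eatimate of LD of f_k} to dominate each $\norm{\D^j f_k}_{\Omega}$, reducing the lemma to a purely scalar estimate. Explicitly, the first step would give
\begin{equation*}
\norm{\D^j\sum_{k=p}^K h^k f_k}_{\Omega} \leq \sum_{k=p}^K h^k \norm{\D^j f_k}_{\Omega} \leq j!\,\mu Q \brac{\frac{4Q}{r}}^j \sum_{k=p}^K \brac{\frac{hk\eta Q}{r}}^k,
\end{equation*}
where I have rewritten $h^k (\eta k Q/r)^k = (hk\eta Q/r)^k$ to expose the structure. The whole task is then to bound the final scalar sum by $(\eta Q/r)^p\, d_p\, h^p$ with $d_p$ depending only on $p$.

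The key observation is the constraint on $K$: since $hK \leq \gamma_0 = r/(e\eta Q)$, every index $k \leq K$ obeys $hk\eta Q/r \leq 1/e$. I would exploit this by splitting each summand asymmetrically,
\begin{equation*}
\brac{\frac{hk\eta Q}{r}}^k = \brac{\frac{h\eta Q}{r}}^p k^p \cdot \brac{\frac{hk\eta Q}{r}}^{k-p},
\end{equation*}
and bounding the second factor by $e^{-(k-p)}$. Factoring the prefactor $h^p (\eta Q/r)^p$ out of the sum and substituting $\ell = k-p$, what remains is $\sum_{\ell=0}^{K-p} (\ell+p)^p e^{-\ell}$, which is dominated by the convergent tail $\sum_{\ell=0}^{\infty} (\ell+p)^p e^{-\ell}$. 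Defining $d_p$ to be this tail, which manifestly depends only on $p$, concludes the estimate.

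There is no deep obstacle here: the statement follows essentially from a clean algebraic rearrangement combined with the budget $hK \leq \gamma_0$. The only mild subtlety is choosing the right split of $(hk\eta Q/r)^k$ so that enough of the $\leq 1/e$ factors are routed into a convergent geometric-polynomial series $\sum k^p e^{-k}$, while exactly $p$ copies of $h\eta Q/r$ are pulled in front to match the target order. Extracting more than $p$ factors would destroy the claimed $h^p$ scaling, while extracting fewer would leave a residual divergence in the summation over $k$, so the chosen split is the essentially unique one that yields the stated bound with a $K$-independent constant $d_p$.
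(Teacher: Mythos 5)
Your proof is correct and follows essentially the same route as the paper's: both reduce the claim via the triangle inequality and \cref{cor: eatimate of LD of f_k} to the scalar sum $\sum_{k=p}^K h^k(\eta k Q/r)^k$, and both extract exactly $p$ powers of $h\eta Q/r$ while routing the excess factors into exponential decay using the budget $hK \leq \gamma_0$. The only difference is in the final bookkeeping: you dominate the remainder by the $K$-independent convergent series $\sum_{\ell\geq 0}(\ell+p)^p e^{-\ell}$ using $hk\eta Q/r \leq 1/e$ termwise, whereas the paper normalizes by $K$, rewrites the summands as $(k/(eK))^k e^p K^p$, and invokes monotonicity of $(k/(eK))^k$ to get a two-term bound — both produce a valid constant $d_p$ depending only on $p$.
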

\begin{proof}
From \cref{cor: eatimate of LD of f_k}, we obtain that
\begin{equation*}
\begin{aligned}
\norm{\D^j\sum_{k=p}^K h^k f_k}_{\Omega} \leq &  j! \mu Q \brac{\frac{4Q}{r}}^j\sum_{k=p}^K h^k \brac{\frac{\eta k Q}{r}}^k.
\end{aligned}
\end{equation*}
Since $h \leq r/(Ke\eta Q)$, we deduce that
\begin{equation*}
\begin{aligned}
\norm{\D^j\sum_{k=p}^K h^k f_k}_{\Omega} \leq & h^p j! \mu Q \brac{\frac{4Q}{r}}^j \brac{\frac{\eta Q}{r}}^p\sum_{k=p}^K \brac{\frac{k}{eK}}^{k} e^pK^p.
\end{aligned}
\end{equation*}
By observing that $\brac{\frac{k}{eK}}^{k}$ is decreasing for $k$ on the interval $[p+1,K]$, we have that
\begin{equation*}
\begin{aligned}
\sum_{k=p}^K \brac{\frac{k}{eK}}^{k} \leq& \brac{\frac{p}{eK}}^{p} + (K-p)\brac{\frac{p+1}{eK}}^{p+1}\\
\leq& \left[ \brac{\frac{p}{e}}^{p} + \brac{\frac{p+1}{e}}^{p+1}\right] K^{-p} =: d_p e^{-p} K^{-p},
\end{aligned}
\end{equation*}
and thereby conclude the proof.
\end{proof}

\cref{lem: estimate of f_h} yields the following corollarys immediately.
\begin{corollary}\label{cor: eatimate of f_h^K}
Under notations and conditions of Lemma \ref{lem: estimate of f_h}, the truncated IMDE $f_h^K =\sum_{k=0}^K h^kf_k$ satisfies
\begin{equation*}
\norm{\D^j f_h^K}_{\Omega} \leq (1+e^{-1}) j! \mu Q \brac{\frac{4Q}{r}}^j
\end{equation*}
\end{corollary}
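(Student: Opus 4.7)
The plan is to obtain this corollary as an immediate specialization of Lemma \ref{lem: estimate of f_h} to the case $p = 0$. Since $f_h^K = \sum_{k=0}^K h^k f_k$, setting $p=0$ in the lemma gives directly
\begin{equation*}
\norm{\D^j f_h^K}_{\Omega} \leq j!\,\mu Q \brac{\frac{4Q}{r}}^j \brac{\frac{\eta Q}{r}}^{0} d_0\, h^{0} = j!\,\mu Q \brac{\frac{4Q}{r}}^j d_0,
\end{equation*}
so the only task is to identify the value of the constant $d_0$ produced by that proof and confirm it equals $1+e^{-1}$.

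Tracing the proof of Lemma \ref{lem: estimate of f_h}, the constant $d_p$ is defined through the bound $\sum_{k=p}^K (k/(eK))^k \leq d_p\, e^{-p} K^{-p}$ with the explicit formula $d_p = (p/e)^p + ((p+1)/e)^{p+1}$. Substituting $p = 0$ and using the convention $0^0 = 1$ gives $d_0 = 1 + 1/e$, which matches the prefactor in the statement. I should also verify that the monotonicity step behind the bound remains valid at $p=0$: differentiating $k\log(k/(eK))$ in $k$ gives $\log(k/K)$, which is strictly negative on $[1,K)$, so $(k/(eK))^k$ is decreasing for $k \geq 1$ and the telescoping estimate $\sum_{k=0}^K (k/(eK))^k \leq 1 + K\cdot (1/(eK))$ used implicitly in the lemma's proof survives intact.

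There is no real obstacle here; the only thing to be careful about is that the $p=0$ endpoint does not invalidate any inequality in Lemma \ref{lem: estimate of f_h}. Since the monotonicity argument above covers $k \geq 1$ and the isolated $k=0$ term contributes exactly $1$, everything composes cleanly and the corollary follows from a one-line application of the lemma together with the elementary evaluation of $d_0$.
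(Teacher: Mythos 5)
Your proposal is correct and takes exactly the paper's route: the paper presents this corollary as an immediate consequence of Lemma \ref{lem: estimate of f_h} with $p=0$, and your endpoint check of the monotonicity of $(k/(eK))^k$ on $[1,K]$ confirms nothing breaks there. One pedantic note: the paper actually defines $d_p$ through $d_p e^{-p} = (p/e)^p + ((p+1)/e)^{p+1}$, so $d_p = p^p + (p+1)^{p+1}/e$ rather than your formula, but since $e^{-p} = 1$ at $p = 0$ the two coincide and your evaluation $d_0 = 1 + e^{-1}$ is exactly right.
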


\begin{corollary}\label{cor: eatimate of f_h^K-f}
Under notations and conditions of Lemma \ref{lem: estimate of f_h}, if the LMM is of order $p$, then, the truncated IMDE $f_h^K =\sum_{k=0}^K h^kf_k$ satisfies
\begin{equation*}
\norm{f_h^K-f}_{\Omega} \leq \mu Q \brac{\frac{\eta Q}{r}}^p d_p h^p,
\end{equation*}
where $d_p$ depends only on $p$.
\end{corollary}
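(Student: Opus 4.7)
The plan is to observe that this corollary is an almost immediate consequence of \cref{lem: estimate of f_h} combined with \cref{cor:fh-f}, so the main work is just lining up the two ingredients and specializing an index.

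First I would invoke \cref{cor:fh-f}: because the LMM is of order $p$, the recursion (\ref{eq:definition of f_k}) together with the order conditions (\ref{eq:order}) forces $f_0 = f$ and $f_1 = f_2 = \cdots = f_{p-1} = 0$. Consequently, the truncated IMDE splits cleanly as
\begin{equation*}
f_h^K - f \;=\; \sum_{k=0}^K h^k f_k - f \;=\; \sum_{k=p}^K h^k f_k,
\end{equation*}
which reduces the corollary to estimating exactly the tail sum that \cref{lem: estimate of f_h} is designed to handle.

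Next I would apply \cref{lem: estimate of f_h} with $j = 0$ to the tail $\sum_{k=p}^{K} h^k f_k$. Since $\D^0$ is the identity and $0! = 1$, the general bound
\begin{equation*}
\norm{\D^j \textstyle\sum_{k=p}^K h^k f_k}_{\Omega} \leq j!\,\mu Q \left(\tfrac{4Q}{r}\right)^{j} \left(\tfrac{\eta Q}{r}\right)^{p} d_p\, h^p
\end{equation*}
collapses at $j=0$ to the claimed inequality $\norm{f_h^K - f}_{\Omega} \leq \mu Q (\eta Q/r)^p d_p h^p$, with the same $d_p$ (depending only on $p$) provided by the lemma. One should also verify that the hypothesis $0 \leq p \leq K$ of the lemma is consistent with the regime in force: since $K$ is the largest integer with $hK \leq \gamma_0$ and $h$ is taken small enough for the earlier exponentially-small truncation estimate to be meaningful, $K$ is much larger than the fixed LMM order $p$, so the range condition is automatically satisfied.

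There is no real obstacle here beyond correctly identifying that $f_k$ vanishes for $1 \le k \le p-1$ (which is exactly the content of \cref{cor:fh-f}); the rest is a one-line specialization. The only thing worth double-checking in writing is that the constant $d_p$ and the structural constants $\mu$, $\eta$ appearing in the corollary are literally the ones produced by \cref{lem: estimate of f_h} (inherited ultimately from \cref{cor: eatimate of LD of f_k} and \cref{lem: eatimate of f_k}), so that no new dependencies on $\alpha_m$, $\beta_m$ or on $f$ are hidden in the statement.
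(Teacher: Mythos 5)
Your proposal is correct and takes essentially the same route as the paper, which states that this corollary follows ``immediately'' from \cref{lem: estimate of f_h}: the intended argument is precisely your decomposition $f_h^K - f = \sum_{k=p}^K h^k f_k$ (using \cref{cor:fh-f} to see $f_0=f$ and $f_1=\cdots=f_{p-1}=0$) followed by the $j=0$ specialization of the lemma. Your extra check that $p\leq K$ holds in the small-$h$ regime is a reasonable piece of bookkeeping that the paper leaves implicit.
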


{
With these results, we are able to provide the proof of \cref{the:error}.
\begin{proof}[Proof of \cref{the:error}]
Let us define for $x \in \mathcal{B}(\K, R)$
\begin{equation*}
\begin{aligned}
\ell(x): =& \sum_{m=0}^M h^{-1}\alpha_m\phi_{mh,f}(x) - \sum_{m=0}^M\beta_mf_{\theta^*}(\phi_{mh,f}(x))\\
\delta(x) : =& \sum_{m=0}^M\beta_mf_h^K(\phi_{mh,f}(x)) - \sum_{m=0}^M\beta_mf_{\theta^*}(\phi_{mh,f}(x)).
\end{aligned}
\end{equation*}
From \cref{lem: estimate of truncation}, we know that for sufficiently small $h$,
\begin{equation}\label{eq:estimate of delta}
\norm{\delta(x)} \leq \norm{\ell(x)} + \rho Q e^{-\gamma_0/h}, \quad \forall x \in \mathcal{B}(\K, R).
\end{equation}
Take $c>1$ obeying 
\begin{equation*}
\sum_{m=0}^M |\beta_m| \frac{\sqrt{\pi}e^{5/4}}{c-1} \leq \frac{1}{e}\brac{1 - \frac{1}{c^2}\sum_{m=0}^M|\beta_m|},
\end{equation*}
and denote
\begin{equation*}
F(\tau) := f_h^K(\phi_{\tau,f}(x)) - f_{\theta^*}(\phi_{\tau,f}(x)).
\end{equation*}
Let $\tilde{I}$ be the largest integer satisfying $4Q c\tilde{I}^{5/2}Mh\leq r$ and let $h_1 = c \sqrt{\tilde{I}} Mh$. By interpolating the function $F(\tau)$ at points $-\tilde{I}h_1, (1-\tilde{I})h_1, \cdots, \tilde{I}h_1$, we obtain that
\begin{equation*}
F(mh) = \sum_{i=-\tilde{I}}^{\tilde{I}} F(i h_1)\prod_{\substack{j=-\tilde{I}\\j\neq i}}^{\tilde{I}} \frac{mh - jh_1}{ih_1 - jh_1} + \frac{ \D^{2\tilde{I}+1}F(\xi)}{(2\tilde{I}+1)!}\prod_{i=-\tilde{I}}^{\tilde{I}} (mh - jh_1): = \sum_{i=-\tilde{I}}^{\tilde{I}} F_i^m + F_{res}^m.
\end{equation*}
By observing the fact that
\begin{equation*}
\prod_{i=-\tilde{I}}^{\tilde{I}} |mh - jh_1| = mhh_1^{2\tilde{I}}\prod_{i=1}^{\tilde{I}} (j-\frac{m}{c\sqrt{\tilde{I}}M})\prod_{i=1}^{\tilde{I}} (j+\frac{m}{c\sqrt{\tilde{I}}M}) \leq mhh_1^{2\tilde{I}} (\tilde{I}!)^2 
\end{equation*}
and combining \cref{cor: eatimate of Df} and \cref{cor: eatimate of f_h^K}, we deduce that if the learned vector field $f_{\theta^*}$ is $(Q, r, 2\tilde{I}+1)$-analytic on $\mathcal{B}(\K, R)$, then, for any $x \in \mathcal{B}(\K, R)$,
\begin{equation*}
\begin{aligned}
\norm{F_{res}^m} \leq \brac{(1+e^{-1})\mu +1} Q \brac{\frac{4Q}{r}}^{2\tilde{I}+1}mhh_1^{2\tilde{I}} (\tilde{I}!)^2 
\leq C_{res} Q \brac{\frac{4Q c \tilde{I}^{3/2}Mh}{re}}^{2\tilde{I}+1},
\end{aligned}
\end{equation*}
where $C_{res} = \brac{(1+e^{-1})\mu +1} 2\pi e^2/c $. It then follows from the definition of $\tilde{I}$, that
\begin{equation}\label{eq: estimation of F_res}
\norm{F_{res}^m} \leq C_{res}Q e^{-\gamma_1/h^{2/5}}, \text{ for }\forall x \in \mathcal{B}(\K, R), \text{ where } \gamma_1 = 2 \brac{\frac{r}{4Q cM}}^{2/5}.
\end{equation}
Next, for $i\neq 0$, we have
\begin{equation*}
\begin{aligned}
\norm{F_i^m} =& \norm{F(ih_1)} \frac{mh}{ih_1-mh} \prod_{j=1}^{\tilde{I}} (j+\frac{m}{c\sqrt{\tilde{I}}M})\prod_{j=1}^{\tilde{I}}(j-\frac{m}{c\sqrt{\tilde{I}}M}) \prod_{j=-\tilde{I}}^{i-1} \frac{1}{i-j} \prod_{j=i+1}^{\tilde{I}} \frac{1}{i-j} \\
\leq & \norm{F(ih_1)} \frac{1}{c\sqrt{\tilde{I}}-1}(\tilde{I}!)^2\frac{1}{(\tilde{I}+i)!(\tilde{I}-i)!},\quad \forall x \in \mathcal{B}(\K, R).
\end{aligned}
\end{equation*}
Denote 
\begin{equation*}
\K_{\tau} = \{ \phi_{t, f}(x)| x \in \K, -\tau \leq t \leq \tau\} \subseteq \mathcal{B}(\K, Q\tau).
\end{equation*}
By integrating over $\K_{n\tilde{I}h_1}$ with $n \leq R/(Q\tilde{I}h_1)-1$, we obtain that
\begin{equation*}
\begin{aligned}
&\int_{\K_{n\tilde{I}h_1}} \sum_{\substack{i=-\tilde{I}\\i\neq 0}}^{\tilde{I}} \norm{F_i^m} dx \\
\leq &\sum_{\substack{i=-\tilde{I}\\i\neq 0}}^{\tilde{I}} \frac{1}{c\sqrt{\tilde{I}}-1}\frac{(\tilde{I}!)^2}{(\tilde{I}+i)!(\tilde{I}-i)!} \int_{\K_{n\tilde{I}h_1}} \norm{f_h^K(\phi_{ih_1,f}(x)) - f_{\theta^*}(\phi_{ih_1,f}(x))} dx \\
\leq& \sum_{i=0}^{2\tilde{I}} \frac{1}{c\sqrt{\tilde{I}}-1}\frac{(\tilde{I}!)^2}{i!(2\tilde{I}-i)!} \int_{\K_{(n+1)\tilde{I}h_1}} \norm{f_h^K(x) - f_{\theta^*}(x)} \det \frac{\partial \phi_{(\tilde{I}-i)h_1, f}(x)}{\partial x}dx.
\end{aligned}
\end{equation*}
As a consequence of the fact that $i!(2\tilde{I}-i)! 2^{2\tilde{I}} \geq (2\tilde{I})!$ and the estimation that
\begin{equation*}
0\leq \det \frac{\partial \phi_{(\tilde{I}-i)h_1, f}(x)}{\partial x} = e^{\int_{0}^{(\tilde{I}-i)h_1} \text{trace}{f'(x(\tau))}d\tau} \leq e^{\tilde{I}h_1Q/r} \leq e^{1/4},
\end{equation*}
we deduce that
\begin{equation}\label{eq: estimation of F_i}
\begin{aligned} 
\int_{\K_{n\tilde{I}h_1}} \sum_{\substack{i=-\tilde{I}\\i\neq 0}}^{\tilde{I}} \norm{F_i^m} dx\leq & \frac{e^{1/4}}{c\sqrt{\tilde{I}}-1}\frac{2^{2\tilde{I}}(\tilde{I}!)^2}{(2\tilde{I})!} \int_{\K_{(n+1)\tilde{I}h_1}} \norm{f_h^K(x) - f_{\theta^*}(x)} dx \\
\leq & \frac{\sqrt{\pi}e^{5/4}}{c-1}\int_{\K_{(n+1)\tilde{I}h_1}} \norm{f_h^K(x) - f_{\theta^*}(x)} dx,
\end{aligned}
\end{equation}
where the second inequality follows from Stirling’s formula (\ref{eq:Stirling’s formula}).
Subsequently, for $i=0$, we have
\begin{equation}\label{eq: estimation of F_0}
\begin{aligned}
\norm{\sum_{m=0}^M\beta_mF_0^m} =& \norm{\sum_{m=0}^M\beta_mF(0)\prod_{j=-\tilde{I}}^{-1} \frac{\frac{m}{c\sqrt{\tilde{I}}M} - j}{ - j}\prod_{j=1}^{\tilde{I}} \frac{\frac{m}{c\sqrt{\tilde{I}}M} - j}{ - j}}\\
= & \norm{F(0)}\left|\sum_{m=0}^M\beta_m\prod_{j=1}^{\tilde{I}} \frac{\frac{m}{c\sqrt{\tilde{I}}M} + j}{ j}\prod_{j=1}^{\tilde{I}} \frac{j - \frac{m}{c\sqrt{\tilde{I}}M}}{j}\right|\\
\geq & \brac{1 - \frac{1}{c^2}\sum_{m=0}^M|\beta_m|}\norm{F(0)},
\end{aligned}
\end{equation}
where the last inequality follows from the estimation that
\begin{equation*}
\begin{aligned}
&\left|\sum_{m=0}^M\beta_m\prod_{j=1}^{\tilde{I}} \frac{j+\frac{m}{c\sqrt{\tilde{I}}M} }{ j}\prod_{j=1}^{\tilde{I}} \frac{j - \frac{m}{c\sqrt{\tilde{I}}M}}{j}\right| \\
\geq & 1 - \left|\sum_{m=0}^M\beta_m\brac{1-\prod_{j=1}^{\tilde{I}} \brac{1 - \frac{m^2}{c^2 \tilde{I} M^2 j^2}}}\right|\\
\geq& 1 - \sum_{m=0}^M|\beta_m|+\sum_{m=0}^M|\beta_m|\prod_{j=1}^{\tilde{I}} \brac{1 - \frac{m^2}{c^2 \tilde{I} M^2 j^2}}\\
\geq& 1 - \sum_{m=0}^M|\beta_m|+\sum_{m=0}^M|\beta_m|\brac{1 - \frac{1}{c^2 \tilde{I} }}^{\tilde{I}}
\geq 1 - \frac{1}{c^2}\sum_{m=0}^M|\beta_m|.
\end{aligned}
\end{equation*}
According to the definition of $\delta$ and $F(\tau)$, we have 
\begin{equation}\label{eq:estimation before integrating}
\norm{\sum_{m=0}^M \beta_m F_0^m}\leq \norm{\delta(x)} + \sum_{m=0}^M\sum_{\substack{i=-\tilde{I}\\i\neq 0}}^{\tilde{I}} |\beta_m| \norm{F_i^m} + \sum_{m=0}^M |\beta_m| \norm{F_{res}^m}.
\end{equation}
By integrating (\ref{eq:estimation before integrating}) over $\K_{n\tilde{I}h_1}$ with $n \leq R/(Q\tilde{I}h_1)-1$, we deduce that
\begin{equation*}
\begin{aligned}
 &\int_{\K_{n\tilde{I}h_1}}\norm{\sum_{m=0}^M \beta_m F_0^m}dx \\
\leq& \int_{\K_{n\tilde{I}h_1}} \norm{\delta(x)} dx+ \sum_{m=0}^M |\beta_m|\int_{\K_{n\tilde{I}h_1}} \sum_{\substack{i=-\tilde{I}\\i\neq 0}}^{\tilde{I}} \norm{F_i^m} dx +  \sum_{m=0}^M |\beta_m| \int_{\K_{n\tilde{I}h_1}}\norm{F_{res}^m} dx.
\end{aligned}
\end{equation*}
Subsequently, combining the estimates (\ref{eq:estimate of delta}), (\ref{eq: estimation of F_res}), (\ref{eq: estimation of F_i}) and (\ref{eq: estimation of F_0}), we conclude that
\begin{equation*}
\begin{aligned}
&\brac{1 - \frac{1}{c^2}\sum_{m=0}^M|\beta_m|}\int_{\K_{n\tilde{I}h_1}}\norm{f_h^K(x) - f_{\theta^*}(x)}dx\\
\leq& \int_{\K_{n\tilde{I}h_1}} \norm{\ell(x)} dx
+ \sum_{m=0}^M |\beta_m| \frac{\sqrt{\pi}e^{5/4}}{c-1}\int_{\K_{(n+1)\tilde{I}h_1}} \norm{f_h^K(x) - f_{\theta^*}(x)} dx \\
&+ \mathbf{V}\sum_{m=0}^M |\beta_m| C_{res}Q e^{-\gamma_1/h^{2/5}} + \mathbf{V}\rho Q e^{-\gamma_0/h},
\end{aligned}
\end{equation*}
where $\mathbf{V} = \int_{\mathcal{B}(\K, R)}1 dx$ is the volume of the domain $\mathcal{B}(\K, R)$. This estimate yields that for $n \leq R/(Q\tilde{I}h_1)-1$,
\begin{equation*}
\begin{aligned}
&\int_{\K_{n\tilde{I}h_1}}\norm{f_h^K(x) - f_{\theta^*}(x)} dx - \lambda_1\int_{\mathcal{B}(\K, R)} \norm{\ell(x)} dx -\lambda_2e^{-\gamma_1/h^{2/5}}\\
\leq& e^{-1} \brac{\int_{\K_{(n+1)\tilde{I}h_1}}\norm{f_h^K(x) - f_{\theta^*}(x)} dx - \lambda_1 \int_{\mathcal{B}(\K, R)} \norm{\ell(x)} dx - \lambda_2e^{-\gamma_1/h^{2/5}}},
\end{aligned}
\end{equation*}
where
\begin{equation*}
\lambda_1 = \frac{e}{e-1}\frac{1}{1 - \frac{1}{c^2}\sum_{m=0}^M|\beta_m|},\ 
\lambda_2 = \lambda_1 \mathbf{V} Q\brac{\sum_{m=0}^M |\beta_m| C_{res} + \rho e^{\frac{\gamma_1h^{3/5}-\gamma_0}{h}}}.
\end{equation*}
By using this estimate iteratively, together with \cref{cor: eatimate of f_h^K}, we deduce that
\begin{equation*}
\begin{aligned}
\int_{\K}\norm{f_h^K(x) - f_{\theta^*}(x)} dx \leq& \lambda_1\int_{\mathcal{B}(\K, R)} \norm{\ell(x)} dx +\lambda_2e^{-\gamma_1/h^{2/5}} + \lambda_3 e^{-R/(Q\tilde{I}h_1)}, 
\end{aligned}
\end{equation*}
where $\lambda_3 = \brac{(1+e^{-1})\mu +1} \mathbf{V} Q$. Moreover, the definition of $\tilde{I}$ yields that
\begin{equation*}
R/(Q \tilde{I} h_1) \geq 4^{3/5}(R/Q)(r/Q)^{-3/5}(cM)^{-2/5}h^{-2/5}, 
\end{equation*}
and we thus conclude that there exist constants $\gamma$ and $c_0$ depended only on $r/Q$, $R/Q$, dimension $D$, domain $\K$ and the coefficients $\alpha_m$ and $\beta_m$, such that
\begin{equation}\label{eq :final estimate}
\begin{aligned}
\int_{\K}\norm{ f_{\theta^*}(x)- f_h^K(x) } dx \leq c_0 \brac{\int_{\mathcal{B}(\K, R)} \norm{\ell(x)}_2^2 dx}^{1/2} +c_1 e^{-\gamma/h^{2/5}}.
\end{aligned}
\end{equation}

The bound of $\int_{\K} \norm{f_{\theta^*} - f} dx$ is an immediate consequence following from estimates (\ref{eq :final estimate}), \cref{cor: eatimate of f_h^K-f} and the triangle inequality. The proof is completed.
\end{proof}

Finally, we give the proofs of \cref{the:error2} and \cref{the:error3} by combining the approximation and generalization theories of neural networks.
\begin{proof}[Proof of \cref{the:error2}]
It suffices to estimate $\mathcal{L}$ according to \cref{the:error}.
As a consequence of \cref{thm:gen}, with taking $\mathcal{G}(f_{\theta})(\cdot) = \sum_{m=0}^M\beta_mf_{\theta}(\phi_{mh,f}(\cdot))$ and $u = \sum_{m=0}^M h^{-1}\alpha_m\phi_{mh,f}$, we have
\begin{equation}\label{eq:estimate of L}
 \mathcal{L}\leq C_N \mathcal{L}_{h,N}(f_{\theta^*}) + \brac{\hat{\lambda}_{N}\lip{\mathcal{G}(f_{\theta^*})}_{\mathcal{B}(\K, R)}^2+C'}N^{-\frac{1}{D}}.
\end{equation}
Subsequently, by \cref{thm:approximation}, we know that there is a tanh FNN $f_{\widehat{\theta}} \in \M$ such that
\begin{equation}\label{eq: estimate of f_h^K - f^N,s}
\norm{f_h^K -f_{\widehat{\theta}}}_{\mathcal{B}(\K, R)} \leq C''(1+\delta) Q \left(\frac{3D}{2r\widehat{N}}\right)^{s},
\end{equation}
where constant $C''$ depends only on the cuboid $\mathcal{C}$. Note that the widths are increased by a factor of $D$ times since $f_h^K(x) \in \R^D$. Since $\mathcal{L}_{h,N} (f_{\theta})$ be the global minimizer of $\mathcal{L}_N$ (\ref{eq:loss-of-lmnets}), we have
\begin{equation}\label{eq:estimate of L_{h,N}}
\begin{aligned}
\mathcal{L}_{h,N}(f_{\theta^*}) \leq& \mathcal{L}_{h,N} (f_{\widehat{\theta}})\\
\leq& 2 \mathcal{L}_{h,N}(f_h^K) + \frac{2}{N}\sum_{n=1}^N\brac{\sum_{m=0}^M|\beta_m| \norm{f_h^K (x_n) - f_{\widehat{\theta}}(x_n)}_2}^2\\
\leq & 2\rho^2 Q^2 e^{-2\gamma_0/h} + 2 \brac{\sum_{m=0}^M|\beta_m|C''(1+\delta) Q \left(\frac{3D}{2r\widehat{N}}\right)^{s}}^2,
\end{aligned}
\end{equation}
where the last inequality follows from \cref{lem: estimate of truncation} and estimate (\ref{eq: estimate of f_h^K - f^N,s}).
In addition, we have 
\begin{equation}\label{eq:estimate of lip}
\lip{\mathcal{G}(f_{\theta^*})}_{\mathcal{B}(\K, R)} \leq \sum_{m=0}^M|\beta_m| \norm {\frac{\partial f_{\theta}(\phi_{mh,f}(x))}{\partial x}}_{\mathcal{B}(\K, R)}\leq\sum_{m=0}^M |\beta_m| \frac{Q}{r} e^{\frac{MhQ}{r}}. 
\end{equation}
By substituting (\ref{eq:estimate of L_{h,N}}) and (\ref{eq:estimate of lip}) into (\ref{eq:estimate of L}), together with the facts that $C_N=\mathcal{O}(N^{1/2})$ and $N^{1/(2D)+1/4} \leq e^{ \gamma_0/h} / (\rho Q)$, we conclude the proof.
\end{proof}

\begin{proof}[Proof of \cref{the:error3}]
By \cref{thm:approximation}, we have that there is a tanh FNN $f_{\widehat{\theta}} \in \M$ whose weights scale as $\mathcal{O}\brac{\mathbf{C}^{-s/2}\widehat{N}^{D(D+s^2+I^2)/2}\cdot (s(s+2))^{3s(s+2)}}$, such that 
\begin{equation*}
\norm{f_h^K -f_{\widehat{\theta}}}_{\mathcal{B}(\K, R)} \leq C''_0(1+\delta) Q \left(\frac{3D}{2r\widehat{N}}\right)^{s},
\end{equation*}
and for $i=1, \cdots, I$, 
\begin{equation*}
\norm{f^{(i)}-f_{\widehat{\theta}}^{(i)}}_{\mathcal{B}(\K, R)} \leq C''_I \alpha \max\left\{r^{I},\ln^{I}\left(\beta \widehat{N}^{s+D+2}\right)\right\} \cdot \frac{\mathbf{C}(D,I,s,f)}{\widehat{N}^{s-I}}.
\end{equation*}
In addition, the weights $\widehat{\theta} = \{\widehat{W}_l,\widehat{b}_l\}_{l=1}^3$ satisfies
\begin{equation*}
\max \widehat{\theta} \leq C''' \mathbf{C}^{-s/2}\widehat{N}^{D(D+s^2+k^2)/2}(s(s+2))^{3s(s+2)},
\end{equation*}
where $C''_0$ and $C'''$ are constants dependent only on the cuboid $\mathcal{C}$, $C''_I$ is a constant dependent on $\mathcal{C}$ and $I$.
Due to these estimates, we have that
\begin{equation*}
\begin{aligned}
\mathcal{R}_{\varepsilon, r_1, I}(\widehat{\theta})
\leq & \sum_{i=0}^{I} \frac{r_1^i}{i!} \brac{\frac{Qi!}{r^i} + C''_I \alpha \max\left\{r^{I},\ln^{I}\left(\beta \widehat{N}^{s+D+2}\right)\right\} \cdot \frac{\mathbf{C}(D,I,s,f)}{\widehat{N}^{s-I}}} + C'''\\
\leq & 2Q + e^{r_1} C''_I \alpha \max\left\{r^{I},\ln^{I}\left(\beta \widehat{N}^{s+D+2}\right)\right\} \cdot \frac{\mathbf{C}(D,I,s,f)}{\widehat{N}^{s-I}} + C'''.
\end{aligned}
\end{equation*}
In addition, it has been shown in (\ref{eq:estimate of L_{h,N}}) that
\begin{equation*}
\mathcal{L}_{h,N} (f_{\widehat{\theta}})
\leq 2\rho^2 Q^2 e^{-2\gamma_0/h} + 2 \brac{\sum_{m=0}^M|\beta_m|C''(1+\delta) Q \left(\frac{3D}{2r\widehat{N}}\right)^{s}}^2.
\end{equation*}
Since $f_{\theta^*}^{\mathcal{R}}$ be the global minimizer of $\mathcal{L}_N$ (\ref{eq:regularized training loss}), we have
\begin{equation}\label{eq:estimate of regularized loss}
\begin{aligned}
&\mathcal{L}_{h,N} (f_{\theta^*}^{\mathcal{R}}) + N^{-1/D-1/2}\mathcal{R}_{\varepsilon, r_1, I}(\theta^*) \leq \mathcal{L}_{h,N} (f_{\widehat{\theta}}) + N^{-1/D-1/2}\mathcal{R}_{\varepsilon, r_1, I}(\widehat{\theta}).
\end{aligned}
\end{equation}
In particular,
\begin{equation*}
\begin{aligned}
\mathcal{R}_{\varepsilon, r_1, I}(\theta^*) \leq N^{1/D+1/2} \mathcal{L}_{h,N} (f_{\widehat{\theta}}) + \mathcal{R}_{\varepsilon, r_1, I}(\widehat{\theta}).
\end{aligned}
\end{equation*}
By choosing sufficiently large $\widehat{N}$, i.e., a suitable definition of $\widehat{N}_0(N)$, we can obtain that
\begin{equation*}
\mathcal{R}_{\varepsilon, r_1, I}(\theta^*) \leq Q^*, \quad \mathcal{R}_{\varepsilon, r_1, I}(\widehat{\theta}) \leq \widehat{Q}.
\end{equation*}
where $Q^*$ and $\widehat{Q}$ are constants dependent on $Q$ and the cuboid $\mathcal{C}$.
Observing the fact that $\lip{\norm{f_{\theta}^{(i)}(x)}}_{\mathcal{B}(\K, R)}$ depends only the weight $\theta^* = \{W_l^*,b_l^*\}_{l=1}^3$ and 
\begin{equation*}
\max \theta^* \leq Q^* \mathbf{C}^{-s/2}\widehat{N}^{D(D+s^2+k^2)/2}.(s(s+2))^{3s(s+2)},
\end{equation*}
we can choose sufficiently small $\varepsilon$ such that
\begin{equation*}
\sqrt{D}/2 \cdot \lip{\norm{f_{\theta}^{(i)}(x)}}_{\mathcal{B}(\K, R)} \cdot \varepsilon \leq \frac{i!}{r_1^i}, \ \text{for } i=0, \cdots, I.
\end{equation*}
Therefore we obtain that
\begin{equation*}
\begin{aligned}
\norm{f_{\theta}^{(i)}(x)}_{\mathcal{B}(\K, R)} \leq& \max\left\{\norm{f_{\theta}^{(i)}(z_j)} \right\}_{j=1}^J + \sqrt{D}/2 \cdot \lip{\norm{f_{\theta}^{(i)}(x)}}_{\mathcal{B}(\K, R)} \cdot \varepsilon\\
\leq& \frac{i!(Q^*+1)}{r_1^i}, \ \text{for } i=0, \cdots, I. 
\end{aligned}
\end{equation*}
which yields $f_{\theta}$ is $(Q^*+1, r_1)$-analytic on $\mathcal{B}(\K, R)$. 
Subsequently, we can apply \cref{the:error} and it suffices to estimate $\mathcal{L}_{h,N}(f_{\theta^*})$ according to \cref{thm:gen}. 
Again using (\ref{eq:estimate of regularized loss}), we have
\begin{equation*}
\begin{aligned}
\mathcal{L}_{h,N}(f_{\theta^*}) \leq & \mathcal{L}_{h,N} (f_{\widehat{\theta}}) + N^{-1/D-1/2}\mathcal{R}_{\varepsilon, r_1, I}(\widehat{\theta})\\
\leq & 2\rho^2 Q^2 e^{-2\gamma_0/h} + 2 \brac{\sum_{m=0}^M|\beta_m|C''(1+\delta) Q \left(\frac{3D}{2r\widehat{N}}\right)^{s}}^2 + \widehat{Q}N^{-1/D-1/2}.
\end{aligned}
\end{equation*}
The proof is completed.
\end{proof}
}

\section{Numerical experiments}\label{sec: numerical results}
In this section, we present various numerical evidence to demonstrate our theoretical findings on the discovery of hidden dynamics using LMNets. Following \cite{du2022discovery, keller2021discovery,raissi2018multistep}, we consider three benchmark problems, including the damped oscillator problem, the Lorenz system, and the glycolytic oscillator; however, we use different settings from prior results \cite{du2022discovery, keller2021discovery,raissi2018multistep} to avoid repetition and highlight our analysis. Here we test the error orders of AB, BDF, and AM schemes for discovery, whose coefficients can be found in \cite{hairer1993solving}.

As \cref{the:error2} and \cref{the:error3} are direct consequences of \cref{the:error} and existing approximation and generalization theories, we only illustrate \cref{the:error2} and \cref{the:error3} in the first example, and mainly focus on \cref{the:error} in the remaining examples. Herein, we measure the learning loss a posteriori using the test loss, where the test data $\mathcal{T}_{test}$ are sampled in the same form and manner as the training data, i.e.,
\begin{equation*}
\mathcal{T}_{test} = \left\{\big(x_n^{test}, \phi_{h,f}(x_n^{test}), \cdots, \phi_{Mh,f}(x_n^{test})\big)\right\}_{n=1}^{N_{test}},
\end{equation*}
and the test loss is formulated as 
\begin{equation*}
\text{Test loss} = \frac{1}{N_{test}}\sum_{n=1}^{N_{test}}\normt{\sum_{m=0}^M h^{-1} \alpha_m\phi_{mh,f}(x_n^{test}) - \sum_{m=0}^M\beta_mf_{\theta}(\phi_{mh,f}(x_n^{test}))}^2.
\end{equation*}
In practice, we can divide our data into two sets: a training dataset for loss optimization and a test dataset for learning performance measurement. Moreover, we evaluate the average error in the $\ell_{1}$-norm, i.e., 
\begin{equation}\label{eq:error of experiments}
\text{Error}(g_1, g_2) = \frac{1}{|\mathcal{T}|}\sum_{x_i\in \mathcal{T}} \norm{g_1(x_i) - g_2(x_i)}, 
\end{equation}
where $\mathcal{T}$ is composed of several points randomly sampled from the given compact region in the first example, and is composed of all the initial states in the test data, i.e., $\mathcal{T} = \{x_n^{test}\}_{n=1}^{N_{test}}$, in the second and third examples. In all examples, we use the fourth-order RK method on a very fine mesh to generate data and simulate the discovered systems for numerical validation. All neural network models are trained on the PyTorch framework with Tesla P40 GPU, where the Python 3.8 environment and CUDA 10.1 toolkit are utilized. And the code accompanying this paper is publicly available at \url{https://github.com/Aiqing-Zhu/IMDE-LMM}.

\subsection{Damped oscillator problem}\label{sec:do}
To begin with, we consider the damped harmonic oscillator with cubic dynamics. The system of equation for $y=(p,q)$ is given by
\begin{equation}\label{eq:do}
\frac{d}{dt}p = -0.1p^3+2.0q^3,\quad \frac{d}{dt}q = -2.0p^3-0.1q^3.
\end{equation}

\begin{figure}[ht]
  \centering
  \includegraphics[width=1\textwidth]{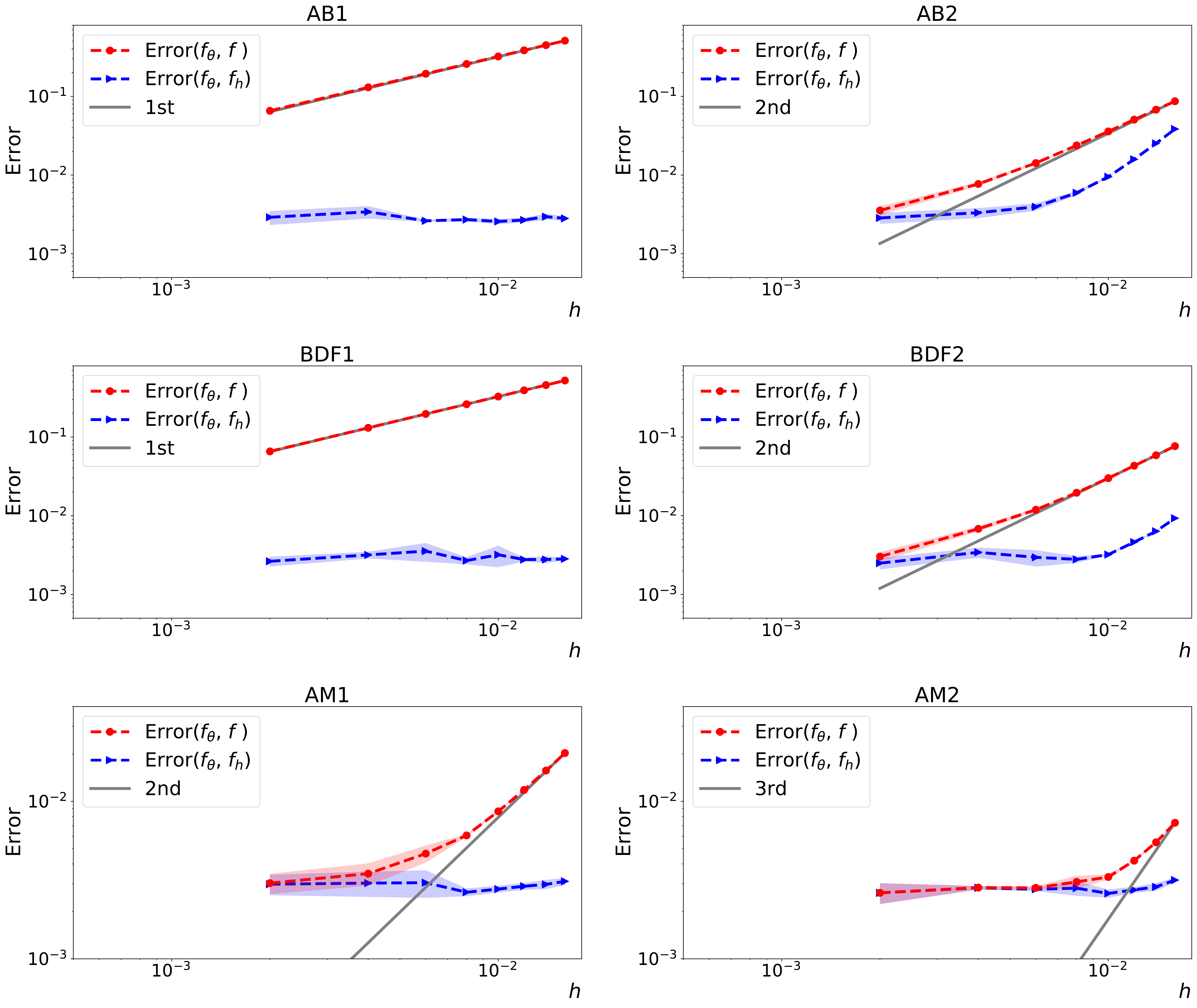}
  \caption{$\text{Error}(f_{\theta}, f)$ and $\text{Error}(f_{\theta}, f_h)$ versus $h$ for the damped harmonic oscillator (\ref{eq:do}). The results are obtained by taking the mean of $5$ independent experiments, and the shaded region represents one standard deviation.}
  \label{fig:DO}
\end{figure}

\begin{figure}[ht]
  \centering
  \includegraphics[width=1\textwidth]{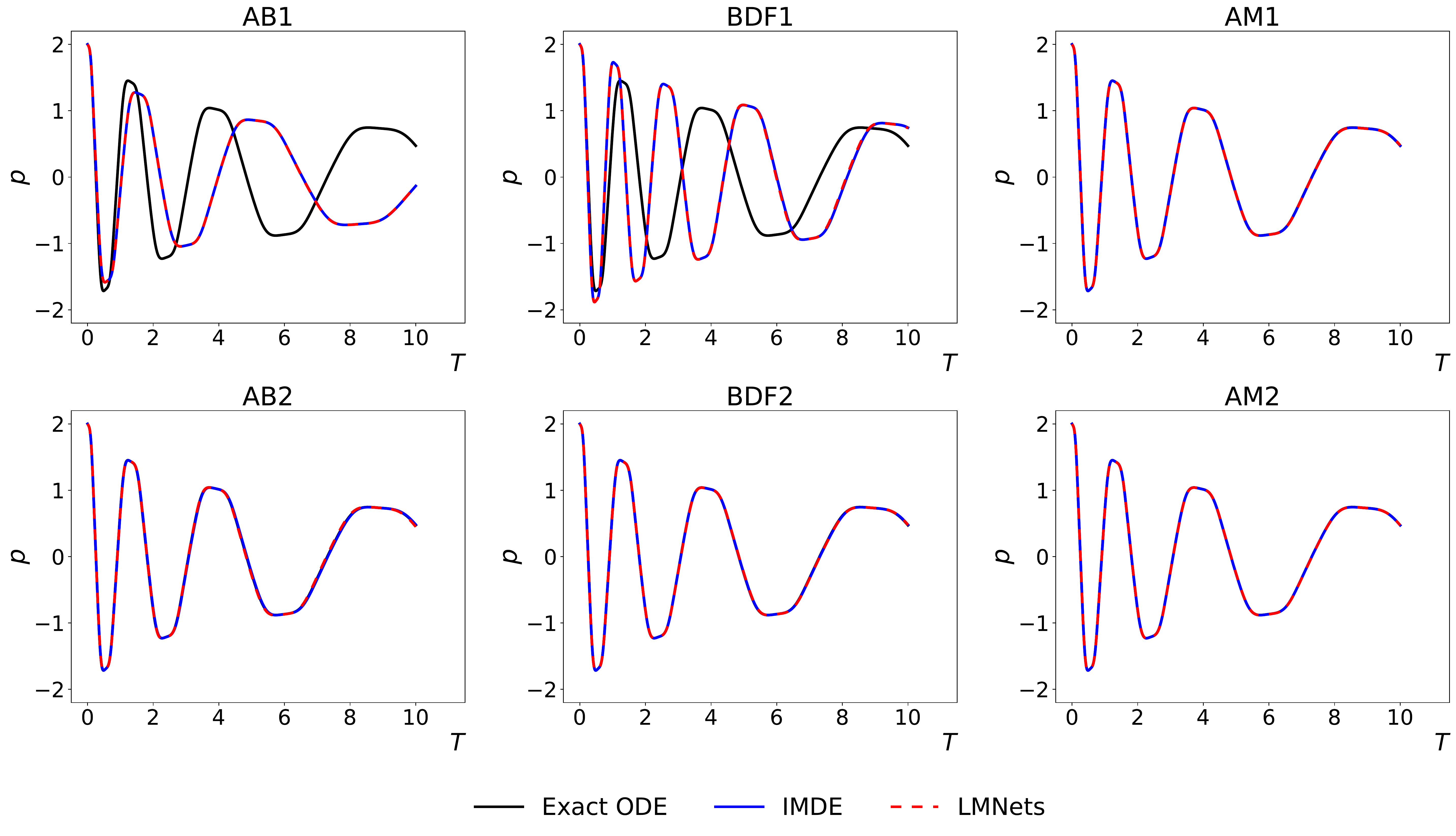}
  \caption{The first component of the trajectories of exact, modified and discovered systems for the damped harmonic oscillator (\ref{eq:do}).}
  \label{fig:DO_traj}
\end{figure}

We generate $1500$ trajectories for the trainingdata, where each trajectory consists of an initial state randomly sampled from $[-2.2,2.2]\times[-2.2,2.2]$ and $10$ subsequent states at equidistant time steps $h$. The neural networks employed consist of two hidden layers with 128 units and \texttt{tanh} activation. We optimize the training loss (\ref{eq:loss-of-lmnets}) using a full batch for $10^5$ epochs, where the optimizer is chosen to be Adam optimization \cite{kingma2014adam} in the Pytorch library, and the learning rate is set to decay exponentially with linearly decreasing powers from $10^{-2}$ to $10^{-4}$.

We first verify the convergence rate with respect to the step size $h$. We randomly sample $10^4$ points from $ [-2,2]\times[-2,2]$ as $\mathcal{T}$, and evaluate the error between $f_{\theta}$ and $f$, as well as the error between $f_{\theta}$ and $f_h^4$ according to (\ref{eq:error of experiments}). The error versus steps $h$ (assigned as $h=0.002i, i=1, \cdots, 8$) and schemes (AB, BDF, AM) is depicted in \cref{fig:DO}, where $5$ independent experiments are performed for each case to obtain the means and the standard deviations. It is shown in \cref{fig:DO} that the error order is consistent with the order of the employed LMM when the discretization error is dominated (see AB1, BDF1 and AB2, BDF2, AM1 with large $h$), which supports the theoretical analysis.

In addition, \cref{fig:DO} also shows that the error between $f_{h}$ and $f_{\theta}$ is significantly lower than that between $f$ and $f_{\theta}$ despite employing only truncation $f_h^4$.  Additionally, we simulate the discovered system $\frac{d}{dt}y = f_{\theta}(y)$, the exact system (\ref{eq:do}), and the truncated IMDE $\frac{d}{dt}y = f_{h}^4(y)$ starting at $(2,0)$. The first components of these trajectories are plotted in \cref{fig:DO_traj} for comparison. It can be observed that the discovered dynamical systems accurately capture the evolution of the corresponding IMDE when using LMMs of order 1. And LMNets successfully learn the target system when the IMDE is close to the target system (AB2, BDF2, AM1 and AM2). These results indicate that training LMNets returns an approximation of the IMDE, which is consistent with the theoretical conclusions in \cref{the:error}.

{
\begin{figure}[t]
  \centering
  \includegraphics[width=1\textwidth]{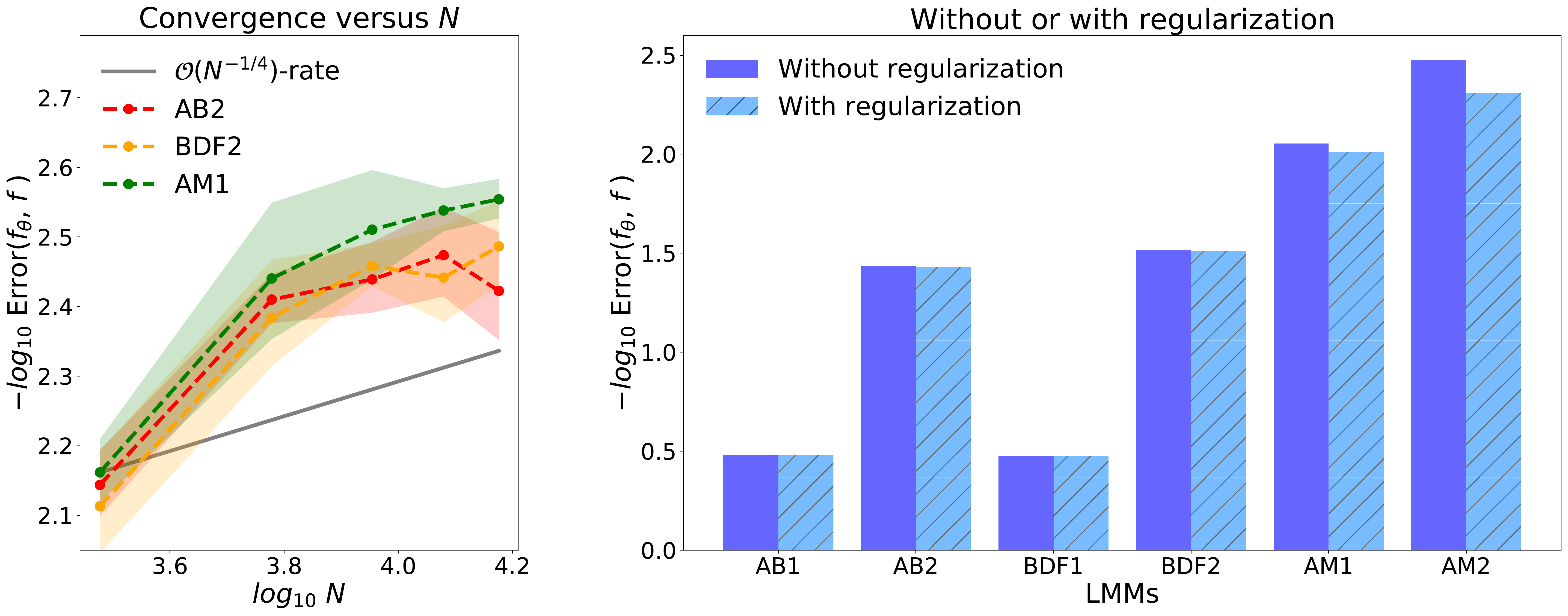}
  \caption{(Left) $\text{Error}(f_{\theta}, f)$ versus the number of training data samples $N$ for the damped harmonic oscillator (\ref{eq:do}). (Right) $\text{Error}(f_{\theta}, f)$ for the damped harmonic oscillator (\ref{eq:do}) without or with regularization. (Total figure) The height of the bars stands for $-log_{10}$ of the $\text{Error}(f_{\theta}, f)$; the higher the better. The results are obtained by taking the mean of five independent experiments, and the shaded region on the left side represents one standard deviation.}
  \label{fig:DO2}
\end{figure}

Subsequently, we investigate the convergence rate with respect to the number of training data samples by employing different numerical schemes (AB2, BDF2 and AM1) and taking $h=0.002$. As shown in \cref{fig:DO}, these assignments have a small discretization error and are appropriate for the test. In the left of \cref{fig:DO2}, the errors are plotted with respect to the number of trajectories, ranging from $300$ to $1500$ (on each trajectory, there are $9$ groups of training samples for AB2 and BDF2, and $10$ groups of training samples for AM1). As expected by \cref{the:error2}, the error between $f$ and $f_{\theta}$ convergence to zero as the number of training data samples increases. However, we observe that the rates of convergence are higher than the theoretical analysis. Further research is necessary to identify the factors that may influence the convergence rates when discovering dynamical systems using deep learning.

Finally, we would like to investigate the influence of the explicit regularization. For various schemes and a fixed step $h=0.01$, we optimize the regularized loss (\ref{eq:regularized training loss}) and record the error in the right of \cref{fig:DO2}. It is shown that adding explicit regularization does not improve the performance, indicating the implicit regularization is effective enough, and explicit regularization may not be necessary. Thus we still use the non-regularized loss (\ref{eq:loss-of-lmnets}) and rely on implicit regularization below.
}

\subsection{Lorenz system}\label{sec:ls}
Subsequently, we consider the normalized 3-D Lorenz system \cite{lorenz1963deterministic,sparrow2012lorenz}, which is a simplified model for atmospheric dynamics and is given as
\begin{equation}\label{eq:LS}
\begin{aligned}
\frac{d}{dt}p =10(q-p),\quad
\frac{d}{dt}q =p(28-10r)-q,\quad
\frac{d}{dt}r =10pq-\frac{8}{3}r,
\end{aligned}
\end{equation}
where $y=(p,q,r)$. Here, the training data consists of data points
on a single trajectory from $t = 0$ to $t = 10$ with data step
$h$ and initial condition $y_0 = (-0.8, 0.7, 2.7)$,
\begin{equation*}
\mathcal{T}_{train} = \left\{\big(x_n, \phi_{h,f}(x_n), \cdots, \phi_{Mh,f}(x_n)\big)| x_n= \phi_{nh}(y_0), n=0,1,\cdots, 10/h-M \right\},
\end{equation*}
and the test data is set as 
\begin{equation*}
\mathcal{T}_{test} = \left\{\big(x_n, \phi_{h,f}(x_n), \cdots, \phi_{Mh,f}(x_n)\big)| x_n= \phi_{n\hat{h}}(y_0), n=0,1,\cdots, (10-Mh)/\hat{h} \right\},
\end{equation*}
where $\hat{h}=0.002$. The chosen model architecture and hyper-parameters are the same as in \cref{sec:do} except that the total number of epochs is set as $3\times 10^5$.

\begin{table}[ht]
  \centering
  \caption{Quantitative results of the Lorenz system (\ref{eq:LS}) obtained by AB and BDF schemes. 
  }
  \begin{tabular}{|c|c|ccc|ccc|}
  \hline
\multicolumn{2}{|c|}{Schemes} & \multicolumn{3}{c|}{AB} & \multicolumn{3}{c|}{BDF}\\
\hline
$M$ & $h$ & Test Loss$^{\frac{1}{2}}$ & Error& Order & Test Loss$^{\frac{1}{2}}$ & Error& Order\\
\hline
 \multirow{2}{*}{$1$} &0.002 & 1.757e-03 & 1.709e-01 & ---& 1.777e-03 & 1.710e-01 & ---\\
&0.004 & 1.837e-03 & 3.418e-01 & 1.000 & 1.769e-03 & 3.419e-01 & 1.000 \\
&0.008 & 2.072e-03 & 6.833e-01 & 0.999 & 2.190e-03 & 6.836e-01 & 1.000 \\
&0.016 & 1.979e-02 & 1.365e+00 & 0.999 & 1.570e-02 & 1.366e+00 & 0.999 \\
&0.032 & 1.404e-01 & 2.706e+00 & 0.987 & 1.139e-01 & 2.730e+00 & 0.999 \\
\hline
 \multirow{2}{*}{$2$}&0.002 & 1.788e-03 & 6.856e-03 & ---& 1.901e-03 & 6.305e-03 & ---\\
&0.004 & 1.812e-03 & 2.047e-02 & 1.578 & 1.804e-03 & 1.695e-02 & 1.427 \\
&0.008 & 2.034e-03 & 7.849e-02 & 1.939 & 1.946e-03 & 6.406e-02 & 1.918 \\
&0.016 & 2.095e-02 & 3.047e-01 & 1.957 & 1.526e-02 & 2.546e-01 & 1.991 \\
&0.032 & 1.347e-01 & 1.101e+00 & 1.853 & 1.531e-01 & 9.805e-01 & 1.945 \\
\hline
 \multirow{2}{*}{$3$} &0.002 & 1.793e-03 & 4.179e-03 & ---& 1.803e-03 & 4.301e-03 & ---\\
&0.004 & 1.843e-03 & 5.276e-03 & 0.336 & 1.795e-03 & 5.237e-03 & 0.284 \\
&0.008 & 2.247e-03 & 1.836e-02 & 1.799 & 1.913e-03 & 1.492e-02 & 1.511 \\
&0.016 & 3.037e-02 & 1.209e-01 & 2.719 & 1.893e-02 & 9.214e-02 & 2.626 \\
&0.032 & 1.537e-01 & 7.597e-01 & 2.651 & 1.500e-01 & 5.622e-01 & 2.609 \\
\hline
  \end{tabular}
  \label{tab:LSerror}
\end{table}
\begin{table}[ht]
  \centering
  \caption{Quantitative results of the Lorenz system (\ref{eq:LS}) obtained by AM schemes.
  }
  \begin{tabular}{|c|ccc|ccc|}
  \hline
{Schemes} & \multicolumn{3}{c|}{AM1} & \multicolumn{3}{c|}{AM2}\\
\hline
$h$ & Test Loss$^{\frac{1}{2}}$ & Error& Order & Test Loss$^{\frac{1}{2}}$ & Error& Order\\
\hline
0.002 & 1.817e-03 & 4.405e-03 & ---& 1.865e-03 & 4.310e-03 & ---\\
0.004 & 1.823e-03 & 6.232e-03 & 0.501 & 1.789e-03 & 4.331e-03 & 0.007 \\
0.008 & 1.888e-03 & 1.704e-02 & 1.451 & 1.978e-03 & 6.074e-03 & 0.488 \\
0.016 & 1.524e-02 & 6.796e-02 & 1.996 & 1.217e-02 & 2.486e-02 & 2.033 \\
0.032 & 1.250e-01 & 2.948e-01 & 2.117 & 1.210e-01 & 1.849e-01 & 2.894 \\
  \hline
  \end{tabular}
  \label{tab:LSerrorAM}
\end{table}
\begin{table}[!ht]
  \centering
  \caption{Leading terms of $f_h-f$.
  }
  \begin{tabular}{|c|ccc|ccc|}
  \hline
{Schemes} & AB2 & BDF2 & AM1 & AB3 &BDF2 &AM2\\
\hline
\rule{0pt}{12pt}
Leading terms & $\frac{5h^2}{12} \D^2f$ & $-\frac{h^2}{3} \D^2f$& $-\frac{h^2}{12} \D^2f$ & $\frac{3h^3}{8} \D^3f$ & $-\frac{h^3}{4} \D^3f$& $-\frac{h^3}{24} \D^3f$\\
  \hline
  \end{tabular}
  \label{tab:leading term}
\end{table}

We first test the convergence with respect to data step size $h$ for AB and BDF schemes. Herein, we evaluate the test loss on test data to measure the learning loss a posteriori, and record the test loss as well as the error evaluated on the test data in \cref{tab:LSerror} to show the convergence more clearly. Here, $5$ independent experiments are simulated to obtain the means. As discussed and numerically verified in \cite{du2022discovery}, LMNets without the auxiliary conditions perform well when $h$ is small. Thus we only take relatively small $h$. It is shown in \cref{tab:LSerror} that there exists a threshold caused by the learning loss, and LMNets can effectively discover the hidden dynamics with an error order consistent with the theoretical ones when the error is significantly larger than the square root of the test loss.

Moreover, we test the performance of AM schemes using the same settings and report the quantitative results in \cref{tab:LSerrorAM}. Despite that LMNets with AM are unstable for discovery \cite{du2022discovery}, they can still perform well and the same error phenomenon can be observed, which is also consistent with the theoretical analysis. It is observed that the error of LMNets using AM is smaller than that using AB and BDF schemes of the same order, which is due to the different leading terms of $f_h-f$ (obtained by \cref{cor:fh-f} and presented in \cref{tab:leading term}).

\subsection{Glycolytic oscillator}
Finally, we consider the model of oscillations in yeast glycolytic. The model describes the concentrations of 7 biochemical species and is formulated as
\begin{equation}\label{eq:go}
\begin{aligned}
\frac{d}{dt}S_1 =& J_0 - \frac{k_1 S_1S_6}{1+(S_6/K_1)^q},\\
\frac{d}{dt}S_2 =& 2 \frac{k_1 S_1S_6}{1+(S_6/K_1)^q} - k_2S_2(N-S_5)-k_6S-2S_5,\\
\frac{d}{dt}S_3 =& k_2S_2(N-S_5) - k_3S_3(A-S_6),\\
\frac{d}{dt}S_4 =& k_3S_3(A-S_6) - k_4S_4S_5 - \kappa(S_4-S_7),\\
\frac{d}{dt}S_5 =& k_2S_2(N-S_5) - k_4S_4S_5 - k_6S_2S_5,\\
\frac{d}{dt}S_6 =& -2 \frac{k_1 S_1S_6}{1+(S_6/K_1)^q} +2k_3S_3(A-S_6)-k_5S_6,\\
\frac{d}{dt}S_7 =&\psi \kappa (S_4-S_7) - kS_7,
\end{aligned}
\end{equation}
where $y=(S_1, \cdots, S_7)$ and the parameters are taken from Table 1 in \cite{daniels2014efficient}. We simulate $60$ and $40$ trajectories ($T=10$) for the training and test data, respectively. For each trajectory, the initial state is randomly sampled from half of the ranges provided in table 2 of \cite{daniels2014efficient}. The chosen model architecture and the other experimental setup are the same as in \cref{sec:ls} except the batch size is set as $3\times 10^4$. 
\begin{figure}[ht]
  \centering
  \includegraphics[width=1\textwidth]{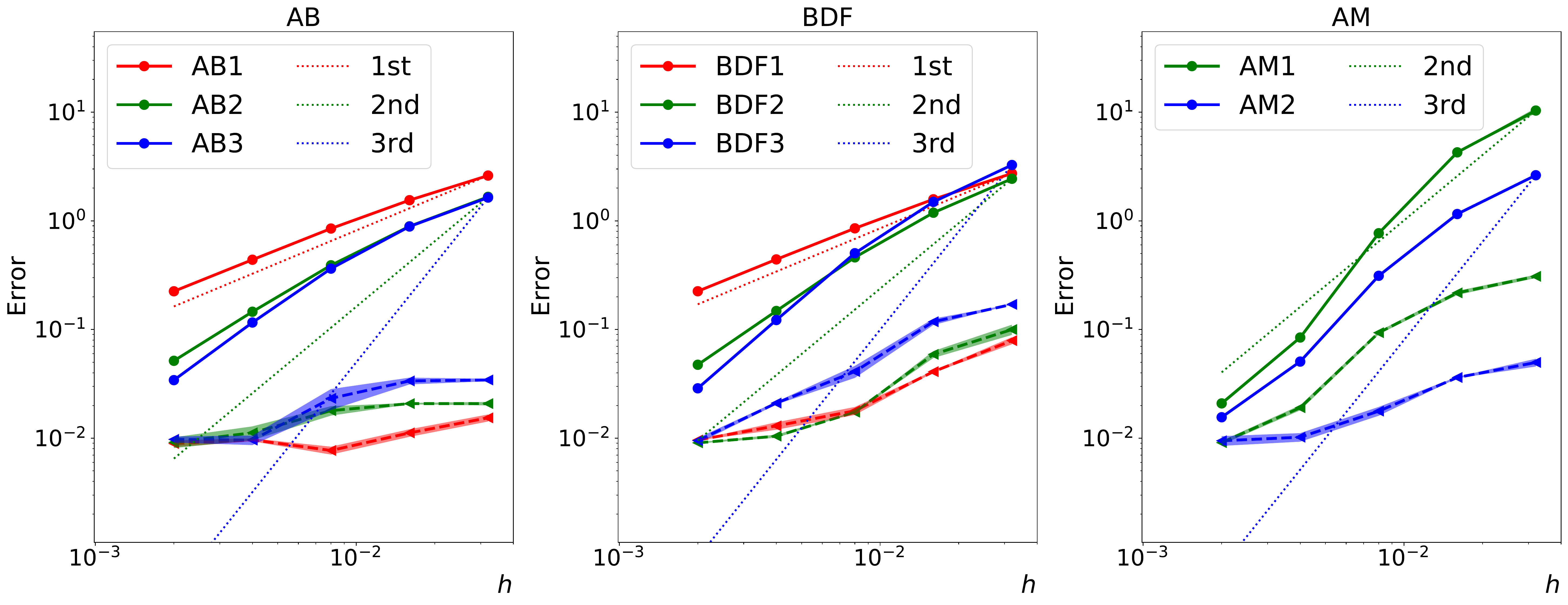}
  \caption{$\text{Error}(f_{\theta}, f)$ (solid curves with circle markers) and the square root of test loss (dashed curves with triangle markers) versus $h$ for the glycolytic oscillator (\ref{eq:go}). The results are obtained by taking the mean of $5$ independent experiments, and the shaded region represents one standard deviation.}
  \label{fig:GO}
\end{figure}
\begin{figure}[ht]
  \centering
  \includegraphics[width=0.9\textwidth]{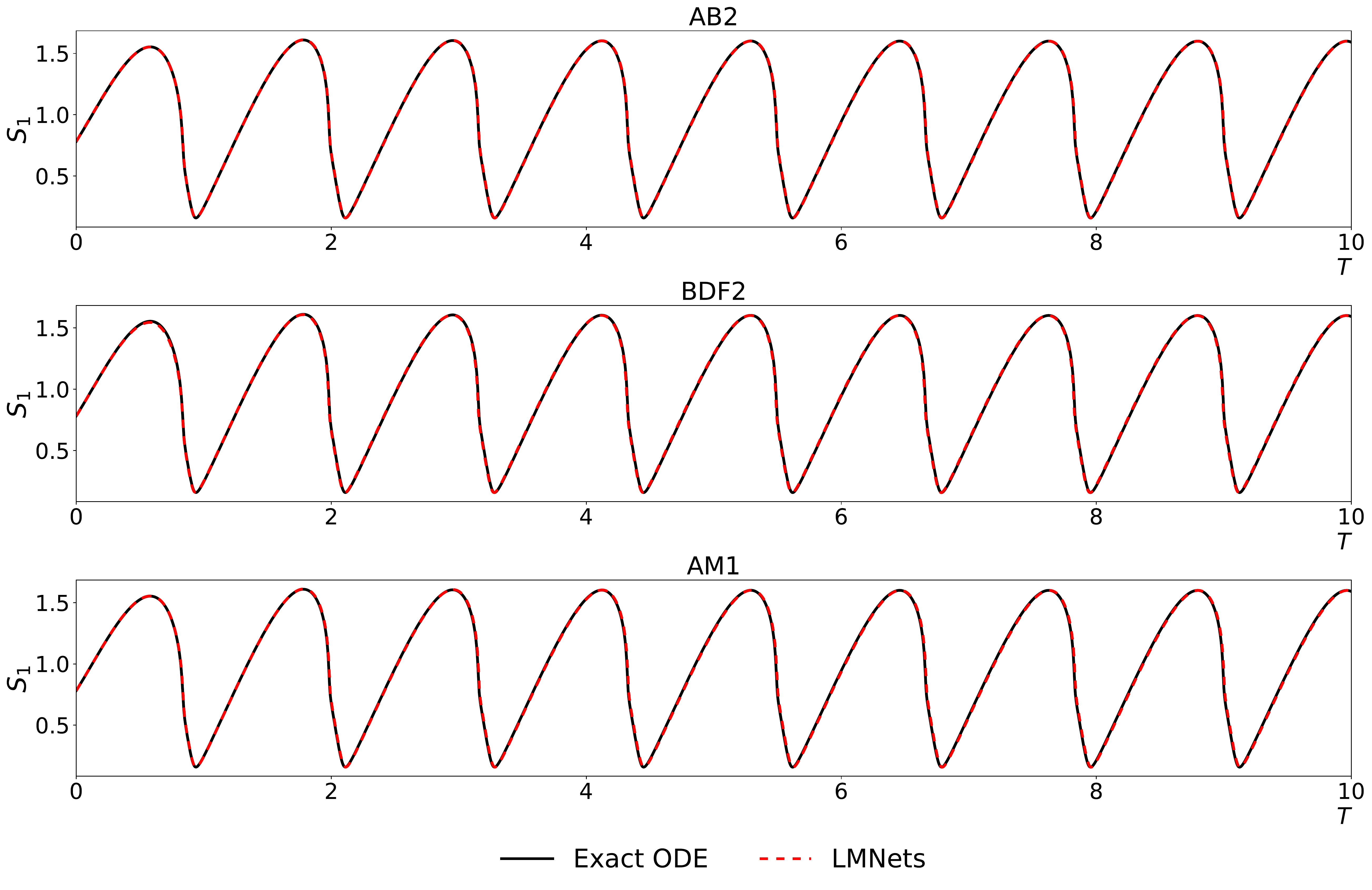}
  \caption{The first component of the trajectories of the exact system (\ref{eq:go}) and the discovered systems for the glycolytic oscillator (\ref{eq:go}).}
  \label{fig:GOtraj}
\end{figure}

We continue to test the convergence of LMNets using AB, BDF and AM schemes with respect to $h$ (assigned as $h=0.002*2^i, i=0,\cdots,4$), where the error is evaluated on the test data. The error and the test loss versus $h$ is shown in \cref{fig:DO}. It is shown that the convergence rates of all schemes are lower than the theoretical ones when $h$ is relatively large. These observations are in agreement with the numerical results in \cite{du2022discovery}, which are caused by the low regularity of the glycolytic oscillator (\ref{eq:go}). Compared to the stable schemes AB, BDF, it is observed that the unstable schemes AM can also learn hidden dynamics and are more accurate when $h$ is relatively small due to the smaller leading terms of $f_h-f$ (given in \cref{tab:leading term}). However, when $h$ is relatively large, due to the unstable properties when used for discovery \cite{du2022discovery}, it is difficult to train and the errors would be large. 

We also perform predictions for the glycolytic oscillator. We use the first trajectory in the test data as the baseline and simulate the discovered systems using the same initial condition, where the test schemes are AB(M = 2), BDF(M = 2), and AM(M = 1), and the data step size is $h = 0.002$. The first components of the trajectories are depicted in \cref{fig:GOtraj} for comparison. It is observed that all schemes can correctly capture the form of the dynamics.

\section{Summary}\label{sec: summary}
We provide an error analysis for the discovery of dynamics using linear multistep methods with deep learning in this paper. Our study reveals that such a learning model returns a close approximation of the IMDE. We thus establish that the error between the discovered system and the target dynamical system is bounded by the sum of the LMM discretization error $\mathcal{O}(h^p)$ and the learning loss. In addition, a priori error bounds is given following the approximation and generalization theories of neural networks. Our conclusions (i) do not depend on auxiliary conditions; (ii) hold for any weakly stable and consistent LMM; (iii) and are able to quantify the error out of the sample locations. Moreover, numerical results support the theoretical analysis. 

The analysis in this paper is restricted to the discovery of low-frequency dynamics with fine data steps. For relatively large data step, this paper does not improve the existing stability analysis and error estimation \cite{keller2021discovery,du2022discovery}, which still remain unknown without the help of auxiliary conditions. For the discovery of high-frequency dynamics, we may need to design specific discretization schemes to construct loss functions, and develop corresponding error analysis. One possible direction is the filtered integrator and the modulated Fourier expansion \cite{hairer2001long}.

The analyticity assumption of the learned model forms the basis of our analysis, but it is proved only for the regularized model. Our numerical experiments rely on implicit regularization, and show that discovery of dynamics using deep learning performs well without regularization. Since the analyticity assumption and implicit regularization are closely related, and there is a lack of general theoretical evidence and precise descriptions of the underlying mechanism of implicit regularization, we would like to investigate these assumptions further in future research, specifically from the perspective of the frequency principle/spectral bias of neural networks.


\bibliographystyle{abbrv}
\bibliography{references}

\end{document}